\documentclass[11pt]{amsart} 

\usepackage[utf8]{inputenc} 

\usepackage{lipsum}

\usepackage[margin=1in]{geometry} 

\usepackage{graphicx} 

\usepackage[english]{babel}

\usepackage{booktabs} 
\usepackage{array} 
\usepackage{paralist} 
\usepackage{verbatim} 
\usepackage{subfig} 

\usepackage{caption}

\usepackage{amsfonts,amsmath,amssymb,bbm,amsthm,amsbsy,amscd}
\usepackage{hyperref}
\usepackage{tikz-cd}
\usepackage{mathtools}
\usepackage{bm}
\usepackage{pinlabel}

\usepackage{color}
\usepackage{pdfcolmk}

\usepackage{tikz}
\usepackage{tikz-cd}

\usepackage{IEEEtrantools}

\newenvironment{equ*}[1]{\begin{IEEEeqnarray*}{#1}}{\end{IEEEeqnarray*}}
\numberwithin{equation}{section}

\usepackage{bbm} 

\usepackage{cleveref}

\newenvironment{frcseries}{\fontfamily{frc}\selectfont}{}

\makeatletter                                                   
\newtheorem*{rep@theorem}{\rep@title}
\newcommand{\newreptheorem}[2]{%
	\newenvironment{rep#1}[1]{%
		\def\rep@title{#2 \ref{##1}}%
		\begin{rep@theorem}}%
		{\end{rep@theorem}}}
\makeatother

\theoremstyle{theorem}
\newtheorem{theoremA}{Theorem}

\newtheorem{thm}{Theorem}[section]
\newtheorem{lemma}[thm]{Lemma}

\crefname{lemma}{Lemma}{Lemmata}
\newtheorem{prop}[thm]{Proposition}
\newtheorem{cor}[thm]{Corollary}
\newtheorem{claim}[thm]{Claim}

\newtheorem*{thm*}{Theorem}
\newtheorem*{lemma*}{Lemma}
\newtheorem*{prop*}{Proposition}
\newtheorem*{corr*}{Corrolary}
\newtheorem*{claim*}{Claim}

\theoremstyle{remark}

\newtheorem*{rmk*}{Remark}
\newtheorem*{conj*}{Conjecture}
\newtheorem*{quest*}{Question}

\theoremstyle{definition}
\newtheorem{defn}[thm]{Definition}

\newtheorem*{defn*}{Definition}
\newtheorem*{exmp*}{Example}

\newreptheorem{theorem}{Theorem}
\newreptheorem{corollary}{Corollary}
\newreptheorem{proposition}{Proposition}

\newcommand{\R}{\mathbb{R}}

\newcommand{\Q}{\mathbb{Q}}
\newcommand{\Z}{\mathbb{Z}}
\newcommand{\N}{\mathbb{N}}
\newcommand{\Hbb}{\mathbb{H}}

\newcommand{\Isom}{\mathrm{Isom}}
\newcommand{\PSLtwo}{\mathrm{PSL}_2}

\newcommand{\Ends}{\mathrm{Ends}}

\newcommand{\att}{\texttt{a}}
\newcommand{\btt}{\texttt{b}}
\newcommand{\xtt}{\texttt{x}}
\newcommand{\ytt}{\texttt{y}}
\newcommand{\ztt}{\texttt{z}}

\newcommand{\area}{\mathrm{area}}
\newcommand{\len}{\mathrm{length}}

\newcommand{\cl}{\mathrm{cl}}
\newcommand{\scl}{\mathrm{scl}}

\newcommand{\col}{\colon}
\newcommand{\defeq}{\vcentcolon=}

\title{Uniform spectral gap of scl in $2$-orbifolds}

\author{Lvzhou Chen}
\address{Department of Mathematics\\ Purdue University\\ West Lafayette, Indiana, USA}
\email[L.~Chen]{lvzhou@purdue.edu}

\author{Nicolaus Heuer}
\address{Tudor Investment Corporation\\ Cambridge, England, UK}
\email[N.~Heuer]{nicolaus.heuer.maths@gmail.com}

\begin{document}
	
\begin{abstract}
	We show a \emph{uniform} spectral gap of stable commutator length for all compact hyperbolic $2$-orbifolds relative to the peripheral subgroups.
	Except for the case of a sphere with three cone points, we have an explicit uniform gap $1/36$.
	These estimates are needed in understanding stable commutator length in $3$-manifolds.
	Our methods use explicit quasimorphisms for the generic case, and use hyperbolic geometry (pleated surfaces) for the exceptional case of a sphere with three cone points.
\end{abstract}

	\maketitle
	
	
	\section{Introduction}
	This paper is a revised version of the appendix to \cite{CH:sclgapgog_old}, which was omitted when that article was shortened and revised for publication as \cite{CH:sclgapgog_new}.
	
	The stable commutator length (scl) in a group $G$ is a characteristic function $\scl_G: G\to [0,+\infty]$ that captures geometric and dynamical information of elements in the group; see Section~\ref{sec:background}.
	It is dual to quasimorphisms on the group by Bavard's duality \cite{bavard}.
	
	A rather common feature of scl is that groups $G$ with non-positive curvature typically have a \emph{(scl-)spectral gap} property: There is a constant $C=C(G)>0$ such that for all $g\in G$ either $\scl_G(g)\ge C$ or $\scl_G(g)=0$ (and such $g$ has a special form). For instance, this holds for all word-hyperbolic groups \cite{CF:sclhypgrp} and mapping class groups \cite{BBF},
	right-angled Artin or Coxeter groups \cite{Heuer,CH:RAAG_chain}.
	In a previous work, the authors showed that this also holds for $3$-manifold groups \cite{CH:sclgapgog_new}.
	The spectral gap property has connections to homomorphism rigidity, and obtaining sharp estimates of the gap $C$, especially for $C=1/2$, has seen applications to the study of generalized torsion elements \cite{gentorsion,CaiClay}.
	
	In this paper, we prove a \emph{uniform} spectral gap for all orbifold fundamental groups $\pi_1(B)$ of compact $2$-dimensional orbifolds $B$ with negative orbifold Euler characteristic $\chi_o(B)$. 
	We write $\scl_B$ for $\scl_{\pi_1(B)}$, and write $\scl_{(B,\partial B)}$ for stable commutator length \emph{relative} to the peripheral subgroups represented by the boundary components of $B$; see below for the definition. It is by definition that $\scl_B\ge\scl_{(B,\partial B)}$, and they are the same if $\partial B=\emptyset$.
	\begin{theoremA}\label{thmA: orbifold uniform gap}
		There is a uniform constant $C>0$, such that
		for any compact $2$-dimensional orbifold $B$ with $\chi_o(B)<0$, we have either $\scl_{(B,\partial B)}(g)\ge C$ or $\scl_{(B,\partial B)}(g)=0$ for all $g\in\pi_1(B)$.
		Moreover, if $B$ is not the $2$-sphere with three cone points, we can take $C$ to be $1/36$.
	\end{theoremA}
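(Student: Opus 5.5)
We argue by cases, following the two methods announced in the abstract: explicit quasimorphisms for the generic case and hyperbolic geometry for the triangle orbifolds. Relative Bavard duality gives the lower bound
\[
\scl_{(B,\partial B)}(g)\ \ge\ \frac{\phi(g)}{2D(\phi)}
\]
for any homogeneous quasimorphism $\phi$ that vanishes on the peripheral subgroups. So in the generic case it suffices to find, for each $g$ of positive relative scl, such a $\phi$ with $\phi(g)/D(\phi)\ge 1/18$.

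The first step is to classify the elements with $\scl_{(B,\partial B)}(g)=0$. These should be exactly the elements that are torsion, peripheral, or conjugate to the inverse of some power of themselves; in a hyperbolic orbifold group this reduces to torsion and peripheral elements. Every other element is a hyperbolic isometry of $\Hbb^2$ whose axis is not a boundary geodesic.

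Next we reduce the generic case to amalgam or HNN splittings. If $B$ is not a sphere with three cone points, then $B$ contains an essential simple closed curve or proper arc. Cutting along it expresses $\pi_1(B)$ as an amalgam or HNN extension. The pieces are free products of cyclic groups, namely cone-point groups $\Z/n$ and boundary $\Z$'s, and the edge group is infinite cyclic (or trivial, when we cut along an arc). We choose the curve so that a given hyperbolic $g$ either acts hyperbolically on the Bass--Serre tree or is elliptic and lies in a simpler piece, and then we induct on complexity.

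For elements that are hyperbolic on the tree, we use explicit counting quasimorphisms built from the normal form, in the style of Chen--Heuer gap theorems for graphs of groups. This should give a gap of $1/2-1/n$ for the relevant orders, and then a universal constant. The point to watch is where the cone angles enter: we need a bound uniform in $n$. To get it we take the splitting where the cut curve bounds a disk with two cone points, or else a cut arc, so that the edge group is well-positioned (e.g.\ $n$-RTF). The constant $1/36$ should come from combining the loss at each stage, for example a factor coming from $\Z/2 * \Z/3$-type pieces with orders $2$ and $3$. For elements inside free products of cyclic groups relative to the $\Z$ factors, the known free product gap of $1/2-1/n\ge 1/6$ (Chen's theorem) applies, and the factor $1/6$ of loss from passing to the edge splitting gives $1/36$.

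The main obstacle is the triangle orbifolds $S^2(p,q,r)$. These have no essential splitting, so we argue by contradiction using geometry. Suppose $g_i\in\pi_1(S^2(p_i,q_i,r_i))$ are infinite order with $\scl(g_i)\to 0$. Take admissible surfaces $f_i\col S_i\to B_i$ with $-\chi(S_i)/2n(S_i)\to 0$ and realize them as pleated surfaces. By Gauss--Bonnet, $\area(S_i)=2\pi|\chi(S_i)|$, and this is small compared to $n(S_i)\cdot \ell(g_i)$. One would first try to bound $\ell(g_i)$ below uniformly, using the systole of triangle orbifolds: hyperbolic elements have translation length bounded below over all triangle groups, with the extreme near $(2,3,7)$. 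One would then show that a thin pleated surface must map most of its boundary into a thin part, namely the neighborhoods of cone points of large order. There the local picture is a cyclic group, so $g$ would have to be nearly elliptic, which is a contradiction.

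If the orders stay bounded, only finitely many triangle groups arise, each of which is hyperbolic and has a gap by the theorem for word-hyperbolic groups, so we can take a minimum. If the orders are unbounded, we pass to a geometric limit. This limit is a cusped orbifold such as $S^2(p,q,\infty)$, which has a splitting, so the previous case applies. Making this limiting argument rigorous is where we expect the real difficulty, and it explains why no explicit constant is claimed for this case.
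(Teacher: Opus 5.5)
Your case split matches the paper's, but both cases have genuine gaps.

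\textbf{The sphere with three cone points.} This is the central gap. Your bounded-order case (finitely many triangle groups, each with a Calegari--Fujiwara gap) is fine. For unbounded orders you give no argument, and the geometric-limit idea cannot work as framed. The elements $g_i$ have unbounded translation length and need not converge to anything. Algebraically the comparison also runs the wrong way: $\Z/p*\Z/q$ surjects onto the von Dyck group, so scl in the limit group only yields \emph{upper} bounds. Your heuristic that the boundary maps into thin parts near cone points of large order is not the right mechanism either.

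The missing idea is to use the thin part of the \emph{domain}:
\begin{enumerate}
\item Pass to a torsion-free normal subgroup $\Gamma$ of finite index via the index formula, and take an admissible surface with a pleated representative.
\item The thin part of the double of $S$ consists of at most $-3\chi(S)$ annuli. So if $-\chi(S)/n(S)$ is small, some boundary segment longer than $2\len(\gamma)+4\epsilon$ fellow-travels another boundary segment with the opposite orientation.
\item This produces conjugates $g,g'$ of $\gamma$ such that $gg'$ and $g'g$ both move a common point by less than $4\epsilon$.
\item Nakanishi's uniform bound says every hyperbolic element of every triangle group has length at least $\delta$. This forces $gg'$ and $g'g$ to be elliptic, with the same small angle about distinct centers.
\item Then $[g,g']$ fixes a point of $\partial\Hbb^2$ and is hyperbolic of length at most $8\epsilon<\delta$. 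That is a contradiction unless $\gamma$ is conjugate to $\gamma^{-1}$.
\end{enumerate}
This is what makes $C$ independent of $(p,q,r)$.

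\textbf{The generic case.} Your constants are not justified.
\begin{itemize}
\item The free product gap is not $\ge 1/6$. The bound $1/2-1/n$ degenerates to $0$ once there is $2$-torsion, and $\scl(ab)=1/12$ in $\Z/2*\Z/3$ with $a,b$ the generators.
\item RTF-type gaps likewise give nothing when there are order-$2$ cone points.
\item Inducting on complexity would compound losses, so ``$1/6\cdot 1/6$'' is numerology.
\end{itemize}
The paper instead uses a single splitting over $\Z$ and checks that the Bass--Serre action is $1$- or $2$-acylindrical. It then applies Clay--Forester--Louwsma to get $1/(12N)$ with $N=\lceil (K+3)/2\rceil$, i.e.\ $1/24$ or $1/36$, regardless of the cone orders. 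Elliptic elements are handled by $\scl_B(g)\ge\scl_{(B',\partial B')}(g)$ for the vertex sub-orbifold $B'$, together with the relative gap $1/24$ for orbifolds with boundary. You give no argument for that relative gap. It needs the Brooks-type quasimorphisms $\bar\phi_{h'}$ and $\bar\phi_{h'}-\bar\phi_b$, which vanish on the boundary chain.

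\textbf{Where scl vanishes.} Your description is wrong. With order-$2$ cone points there are infinite-order elements conjugate to their inverses, such as a product of two distinct conjugates of an involution, and these have $\scl=0$. Any argument by contradiction along a sequence with $\scl\to0$ must exclude them explicitly.
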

	
	Even if one is only interested in the closed case, it is essential to consider the case with boundary and relative scl. This is because such orbifolds occur naturally as vertex pieces
	in natural splittings of closed orbifold groups, for which we need to understand scl of vertex groups relative to edge groups. 
	In the case of nonempty boundary we obtain the stronger explicit estimate below.
	\begin{theoremA}\label{thmA: orbifold uniform gap with boundary}
		Let $B$ be a compact $2$-dimensional orbifold with nonempty boundary and $\chi_o(B)<0$.
		For any $g\in \pi_1(B)$, we have either
		$$\scl_{(B,\partial B)}(g)\ge 1/24$$
		or $\scl_{(B,\partial B)}(g)= 0$. Moreover, $\scl_{(B,\partial B)}(g) = 0$ if and only if $g$ has finite order, $g$ is conjugate to $g^{-1}$, or $g$ is peripheral.
	\end{theoremA}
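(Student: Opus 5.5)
The plan is to exploit the fact that when $\partial B\neq\emptyset$, the orbifold group splits as a free product. Let $B$ have underlying surface $S$ with $b\ge 1$ boundary components and cone points of orders $n_1,\dots,n_k$. Then
\[
\pi_1(B)\cong F_r * \Z/n_1 * \cdots * \Z/n_k ,
\]
with $F_r$ free. The boundary components are represented by explicit words: in the orientable case, one boundary word is a product of commutators, the cone generators and the other boundary loops, and the remaining boundary loops are free generators; the nonorientable case is analogous with squares. The case $\chi_o(B)<0$ guarantees that this free product is nonelementary, apart from trivial degenerate cases. We then estimate relative scl using the standard tools for free products.

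First, we reduce relative scl to ordinary scl of a chain. By definition,
\[
\scl_{(B,\partial B)}(g)=\inf \scl_{\pi_1(B)}\Bigl(g+\sum_i t_i\,\partial_i\Bigr)
\]
over real coefficients $t_i$, where the $\partial_i$ are the boundary words. Equivalently, by the relative Bavard duality, it is the supremum of $\phi(g)/2D(\phi)$ over homogeneous quasimorphisms $\phi$ that vanish on the peripheral subgroups.

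For the lower bound $1/24$, we construct such quasimorphisms explicitly: counting or "letter-thinned" quasimorphisms adapted to the free product, in the style of the free-product gap results (Chen's free product gap and Heuer's RAAG gap). Because cone orders satisfy $n_i\ge 2$, the free product gap for torsion factors gives roughly $\tfrac12-\tfrac1n\ge\tfrac16$-type constants, since we may assume $n\ge3$ or handle $\Z/2$ separately. The key point is to choose quasimorphisms of defect at most $1$ (or another controlled value) that vanish on every boundary word. The cleanest way to arrange this is to change generators so that all but one boundary word are free generators. We then kill those generators by working in the quotient $\pi_1(B)\to \pi_1(B)/\langle\!\langle \partial_2,\dots,\partial_b\rangle\!\rangle$, or more safely by pulling back quasimorphisms on the free product that ignore those letters. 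This leaves the single relation word $\partial_1$, which must be handled by hand.

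The main obstacle is this remaining boundary word, which involves every generator. Here we use a geometric rather than algebraic argument, via admissible surfaces and the free product normal form. An admissible surface for $g$ relative to $\partial B$ can be cut along the preimages of the splitting arcs (properly embedded arcs separating the cone points and handles), which decomposes it into pieces. The linear programming/branch-counting argument then bounds $-\chi^-/2n$ from below by $1/24$ unless $g$ is conjugate into a vertex group of the splitting, that is, unless $g$ is torsion or peripheral, or $g$ is conjugate to $g^{-1}$. The constant $1/24$ is plausibly $\tfrac12\cdot\tfrac1{12}$, where $\tfrac1{12}$ comes from the worst case of a $(2,3)$ cone pair; I would first check that this matches $\tfrac12\bigl(1-\tfrac12-\tfrac13\bigr)=\tfrac1{12}$.

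For the characterization of vanishing, one direction is standard: torsion elements, peripheral elements, and elements conjugate to their inverse all have zero relative scl. For the converse, we use the fact that any other element is hyperbolic in the free product and not conjugate into the peripheral subgroups. The quasimorphism or surface estimate above then applies and gives the bound $\ge 1/24$.
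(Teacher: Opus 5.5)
There is a genuine gap. You correctly identify the single long boundary word as the crux, but the proposal never gives a mechanism that yields $1/24$, and the one concrete reduction you propose fails. Killing the free boundary generators $\btt_1,\dots,\btt_m$ (via the quotient by $\langle\!\langle \btt_1,\dots,\btt_m\rangle\!\rangle$, or via quasimorphisms that ignore these letters) discards exactly the information you need. Suppose $B$ has at least three boundary components and take $g=\btt_1\btt_2^{-1}$. Then $g$ has infinite order, is not peripheral, is not conjugate to its inverse, and $[g]=[\btt_1]-[\btt_2]$ lies in the span of the boundary classes, so the theorem asserts $\scl_{(B,\partial B)}(g)\ge 1/24$. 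But $g$ maps to the identity, so every quasimorphism built your way vanishes on it.

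The next step, cutting an admissible surface along splitting arcs and appealing to an unspecified linear programming/branch-counting argument, carries the entire weight of the theorem and is only asserted. Nothing explains how to control the long boundary word $b$, which runs through every free factor and enters the chain $g-c+t\,\partial B$ with an arbitrary real coefficient $t$. This is also why the absolute free-product gaps ($\frac12-\frac1n$ or $\frac1{12}$) do not apply directly. Finally, being conjugate into a free factor is not the same as being torsion or peripheral: the handle generators $\xtt_i$ also span free factors. In the nonorientable genus-one case, $g=\xtt_1^\ell$ has $[g]\in V_{\partial B}$ and needs a separate argument.

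The missing idea is a Brooks-type counting quasimorphism attached to a well-chosen conjugate. Write $g=h^q$ with $h$ not a proper power, and let $h'$ be a cyclically reduced conjugate that is lexicographically minimal among its cyclic conjugates. Then $h'$ is not self-overlapping, so $\bar\phi_{h'}$ has defect at most $6$. It satisfies $\bar\phi_{h'}(g)=q$, using that $g$ is neither conjugate to $g^{-1}$ nor into a free factor. It also vanishes on every cyclically reduced word of length at most $|h'|$ other than cyclic conjugates of $(h')^{\pm1}$, in particular on each letter $\btt_i$, so nothing has to be killed.

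If $\bar\phi_{h'}(b)=0$, Bavard duality gives $1/12$. Otherwise $|h'|<|b|$, and the explicit form of $b$ forces $h'$ to occur in it exactly once and $(h')^{-1}$ not at all, so $\bar\phi_{h'}(b)=1$. Since $\bar\phi_b$ vanishes on $h'$ and on the $\btt_i$ for length reasons, and $\bar\phi_b(b)=1$, the difference $\bar\phi_{h'}-\bar\phi_b$ vanishes on the whole boundary chain. It has defect at most $12$ and still equals $q\ge 1$ on $g$, giving $1/24=1/(2\cdot 12)$; the constant does not come from a $(2,3)$ cone pair as you guessed. The nonorientable case is analogous, with $\xtt_1^\ell$ handled by $\bar\phi_b$ alone.
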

	
	This is helpful for proving spectral gap results for Seifert fibered $3$-manifolds \cite{CH:sclgapgog_new}.
	Theorem~\ref{thmA: orbifold uniform gap with boundary} makes certain scl estimates explicit and uniform for Seifert fibered $3$-manifolds; 
	See \cite[Lemma~6.16 and Remark~6.17]{CH:sclgapgog_new}.
	
	The groups in Theorems~\ref{thmA: orbifold uniform gap} and \ref{thmA: orbifold uniform gap with boundary} are word-hyperbolic. 
	For each hyperbolic group, a spectral gap exists by Calegari--Fujiwara \cite{CF:sclhypgrp}, but the size of the gap in general depends on the number of generators 
	and the hyperbolicity constant $\delta$, and thus it is not uniform for an obvious reason and it is not explicit.
	Their theorem shows that there is also a spectral gap for \emph{integral chains}.
	In general, $\scl_G$ is defined for \emph{chains} $c=\sum_i c_i g_i$, a formal linear combination of elements $g_i\in G$ with $c_i\in \R$; see Section~\ref{sec:background} for details.
	We say it is an integral chain if each $c_i\in \Z$.
	However, as revealed by the authors in \cite{CH:RAAG_chain}, the size of the (optimal) gap for chains can be very different from that for elements: For right-angled Artin groups, there is a uniform spectral gap $1/2$ for elements, but the optimal gap for integral chains goes to zero for a sequence of such groups. The relative stable commutator length $\scl_{(G,\{G_\lambda\})}(g)$ is defined as the infimum of $\scl_G(g+\sum_\lambda c_\lambda)$ over all choices of chains $c_\lambda$'s in the subgroups $G_\lambda$'s; see Section~\ref{sec:background} for more details.
	
	When the orbifold $B$ has nonempty boundary, the orbifold group $\pi_1(B)$ is a free product of cyclic groups. 
	As a byproduct of our methods, we have the following explicit and uniform spectral gap for integral chains for all such free products, 
	where the equivalence of chains is explained in Section~\ref{sec:background}.
	The gap $1/12$ is sharp for this family of groups and agrees with the gap for elements; see \cite[Theorem~6.9]{CFL16} and \cite[Lemma 2.6]{CH:sclgapgog_new}.
	\begin{theoremA}\label{thmA: free product chain gap}
		Let $G$ be a free product of cyclic groups and let $c = \sum c_i g_i$ be an integral chain. 
		Then $\scl_G(c) \geq 1/12$ unless $c$ is equivalent to the zero chain, in which case $\scl_G(c)=0$.
	\end{theoremA}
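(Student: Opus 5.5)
Here $G=\ast_{j}A_j$ is a free product of cyclic groups $A_j$, finite or infinite. The plan is to reduce to the linear-programming / disk-and-polygon description of scl in free products and then bound the Euler characteristic contribution. Since $\scl$ is invariant under equivalence of chains, we first normalize $c$. Write each $g_i$ in cyclically reduced form. Terms $g_i$ conjugate into a factor $A_j$ are collected. Torsion terms and terms whose combination in $A_j$ is zero in $H_1(A_j;\R)$ are dropped, and conjugate/inverse terms are combined. Once the normalized chain is nonzero, we want $\scl_G(c)\ge 1/12$. If the normalized chain is not null-homologous then $\scl_G(c)=\infty$, so we may assume $c$ is null-homologous.

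For the lower bound we use Bavard duality together with an explicit quasimorphism. We reduce to free products of finite cyclic groups by mapping each infinite cyclic factor $\Z$ onto $\Z/n$ for large $n$. Scl is monotone under homomorphisms, and for large $n$ the image of the normalized nonzero chain stays nonzero, since the finitely many relevant exponents survive. We then work with $G=\ast \Z/n_j$ acting on its Bass--Serre tree $T$.

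The key step is to build a homogeneous-able quasimorphism $\phi$ of defect at most $1$ (after homogenization $D(\bar\phi)\le 2$), and to evaluate $\bar\phi(c)\ge 1/3$ in a suitable sense, giving $\scl(c)\ge \bar\phi(c)/2D\ge 1/12$. A natural candidate is a counting quasimorphism on alternating words: pick a letter $a\in A_j\setminus\{1\}$ appearing in some cyclically reduced $g_i$ with nonzero net coefficient, and define $\phi$ via a Brooks-type count of $a$ against $a^{-1}$ (or of pairs $(a,b)$ from different factors). Because reduced words in a free product have no cancellation inside a factor beyond single syllables, the defect is uniformly bounded independent of orders.

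Alternatively one can argue geometrically. Take an admissible surface $S$ for $c$ in normal form, decomposed into disk pieces (polygons from the factor groups) and rectangles. Each disk piece meeting $m$ arcs, where $m\ge2$ from the reduced condition and the factor being nontrivially hit, contributes Euler characteristic at most $1-m/2 \cdot (1/2)$, and a counting argument gives $-\chi^-(S)\ge n(S)/6$ once no piece is trivial. This yields the bound $1/12$.

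The main obstacle is exactly this combinatorial step for chains: excluding pieces that contribute nonnegative Euler characteristic, such as disks with only one or two corners. Single-corner disks require a syllable $a$ with $a^{k}=1$, i.e.\ torsion pieces, which the normalization to a non-equivalent-to-zero chain must rule out globally rather than term by term. I would try first the Chen--Heuer style argument: assign each polygon a weight and show via Gauss--Bonnet that every component of $S$ with nonzero contribution has $-\chi\ge \tfrac16 \deg$. Here the $1/6$ comes from the worst case of triangles with orders $(2,3)$, mirroring the sharp $1/12$ gap of $\Z/2*\Z/3$.
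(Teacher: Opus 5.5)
You leave the decisive step unproved, and the quasimorphisms you propose for it cannot work. Counts of a single letter $a$ against $a^{-1}$, or of pairs of letters, do not separate the terms of a chain, so they can vanish on chains that are not equivalent to zero.

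For example, in $(\Z/2)\star(\Z/3)=\langle\att\rangle\star\langle\btt\rangle$ take $g_1=(\att\btt)^3(\att\btt^{-1})^2$ and $g_2=(\att\btt)^2\att\btt^{-1}\att\btt\att\btt^{-1}$. Their cyclic sequences of $\btt$-exponents are $(+,+,+,-,-)$ and $(+,+,-,+,-)$. Both are cyclically reduced and not proper powers, and $g_1,g_2,g_1^{-1},g_2^{-1}$ are pairwise non-conjugate. Since $H_1(G;\R)=0$, the chain $c=g_1-g_2$ is null-homologous. It is nonzero in $B_1^H(G)$, because the homogenized Brooks count of the length-$5$ word $\btt\att\btt\att\btt$ is $1$ on $g_1$ and $0$ on $g_2$. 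Yet a direct check shows $\bar\phi_w(g_1)=\bar\phi_w(g_2)$ for every word $w$ of length at most $4$. So every quasimorphism of the kind you suggest gives $\bar\phi(c)=0$, and Bavard's inequality yields nothing. The figures $D(\bar\phi)\le 2$ and $\bar\phi(c)\ge 1/3$ are not attached to any actual construction.

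The geometric alternative is only an outline. A disk piece meeting $m$ arcs contributes exactly $1-m/2$, so monogons contribute $+1/2$ and bigons $0$. The inequality $-\chi^-(S)\ge n(S)/6$ that would have to absorb them is just a restatement of the theorem, and you give no argument for it; you flag it yourself as the main obstacle. The passage from $\Z$ factors to $\Z/n$ factors is harmless but does no work.

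The missing idea is to count the \emph{whole longest term} of the chain, not short patterns.

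First normalize: the $g_i$ are cyclically reduced and not proper powers, no $g_i$ is conjugate to $g_j^{\pm1}$ for $j\neq i$, and none is conjugate to its own inverse (such terms are equivalent to zero). Let $g_1$ have maximal length; if $|g_1|\le 1$, null-homology already forces $c$ to be equivalent to zero.

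The lexicographically least cyclic conjugate $g'$ of $g_1$ is not self-overlapping. Indeed, if $g'=uvu$ with $u\neq 1$, then $uv\neq vu$ (else $g_1$ is a proper power), and one of $uuv$, $vuu$ is a smaller cyclic conjugate.

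For such $g'$ the Brooks function $\phi_{g'}=C_{g'}-C_{g'^{-1}}$ has $D(\phi_{g'})\le 3$, independently of the orders of the factors. Write $h_1=p_1^{-1}\ztt_1p_2$, $h_2=p_2^{-1}\ztt_2p_3$ and $(h_1h_2)^{-1}=p_3^{-1}\ztt_3p_1$, where the single letters $\ztt_j$ absorb merged torsion syllables. The defect is then a sum over three junction words of length less than $2|g'|$. Each contains at most one copy of $g'$ and at most one of $g'^{-1}$, so $D(\bar\phi_{g'})\le 6$.

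Non-self-overlap, together with $g_1$ not being conjugate to $g_1^{-1}$, also gives $\bar\phi_{g'}(g_1)=1$ and $\bar\phi_{g'}(g_i)=0$ for $i\ge 2$. The reason is that an occurrence of $g'^{\pm1}$ in a power of a cyclically reduced word of length at most $|g'|$ either forces that word to be a cyclic conjugate of $g'^{\pm1}$ or exhibits a self-overlap of $g'$. Thus $|\bar\phi_{g'}(c)|=|c_1|\ge 1$ by integrality, and Bavard gives $\scl_G(c)\ge 1/12$.
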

	
	A powerful tool for proving lower bounds of scl is Bavard's duality \cite{bavard}.
	However, it is very difficult to use it to obtain good and explicit bounds due to our lack of understanding and limited constructions.
	Our proof of Theorems~\ref{thmA: orbifold uniform gap with boundary} and \ref{thmA: free product chain gap} uses a type of counting quasimorphisms that we define in free products of cyclic groups, which enjoy a separation property. 
	The exceptional case of a sphere with three cone points in Theorem~\ref{thmA: orbifold uniform gap} requires a different approach using hyperbolic geometry. 
	In particular, we provide the existence of pleated surfaces under soft assumptions in the appendix (Theorem~\ref{thm: restrict to pleated}), which seems hard to find in the literature.
	
%
%


\subsection*{Organization of the paper}
In Section~\ref{sec:background}, we recall the necessary background on stable commutator length, relative stable commutator length, quasimorphisms, Bavard's duality, and $2$-orbifolds. 
In Section~\ref{sec: chain gap}, we construct counting quasimorphisms in free products of cyclic groups and develop their basic properties to prove Theorem~\ref{thmA: free product chain gap}.
Then we apply these quasimorphisms to prove the relative spectral gap for orbifolds with nonempty boundary (Theorem~\ref{thmA: orbifold uniform gap with boundary}) in Section~\ref{sec: orbifold with boundary}.
Finally we deal with closed orbifolds and prove Theorem~\ref{thmA: orbifold uniform gap} in Section~\ref{sec: closed case}: 
the generic closed case is handled using acylindrical Bass--Serre tree actions, while the case of a sphere with three cone points is treated separately using hyperbolic geometry. 
The appendix establishes the existence of pleated representatives for admissible surfaces, which is used in the exceptional closed case.

\section{Background} \label{sec:background}
	

\subsection{scl and relative scl}\label{subsec: scl and rel scl}
We recall some basics about stable commutator length. A comprehensive reference is \cite{Cal:sclbook}.
\begin{defn}
	For any group $G$ and any element $g$ in the commutator subgroup $[G,G]$, the commutator length $\cl_G(g)$ is the minimal number of commutators whose product is $g$.
	The stable commutator length $\scl_G(g)$ is the limit
	$$\scl_G(g)\defeq \lim_n \frac{\cl_G(g^n)}{n},$$
	which always exists by subadditivity. 
\end{defn}

We will use the topological definition below, which also extends the definition above to \emph{chains} in $G$. 
Recall that a chain in $G$ is a formal linear combination $c=\sum_i c_i g_i$ of finitely many elements $g_i\in G$, with $c_i\in \R$.
We say the chain $c$ is integral (resp. rational) if $c_i\in \Z$ (resp. $c_i\in\Q$) for all $i$.
Denote by $C_1(G)$ the $\R$-vector space of all chains in $G$.
Taking homology gives a linear map $C_1(G)\to H_1(G)$, whose kernel $B_1(G)$ consists of null-homologous chains.

\begin{defn}\label{def: admissible surfaces}
	Fix any path-connected space $X$ with $\pi_1(X)=G$, and fix a rational chain $c=\sum_i c_i g_i$.
	An \emph{admissible surface} for $c$ of degree $n$ is a map $f:S\to X$ from a compact oriented surface $S$ such that
	the image of each boundary component represents the conjugacy class of some $g_i^k$ for some $k\in\Z$, 
	and the sum of $kg_i$ over all boundary components is equal to the chain $n\cdot c$. 
	We say $S$ is \emph{monotone} if each $k>0$.
	Such (monotone) admissible surfaces exist for some $n$ exactly when $c$ is null-homologous (i.e. $c\in B_1(G)$).
	Although the map $f$ is an important part of the data, we often suppress it and simply write $S$ for an admissible surface and denote the degree as $n(S)$.
	
	Let $\chi^-(S)$ be the Euler characteristic of $S$ after removing $S^2$ or $D^2$ components. We measure the complexity of an admissible surface $S$ by $-\chi^-(S)/2n(S)$. 
	For $c\in B_1(G)$ we define
	$$\scl_G(c)\defeq \inf_S \frac{-\chi^-(S)}{2n(S)},$$
	where the infimum is taken over all admissible surfaces $S$ for the chain $c$.
	This agrees with the algebraic definition when $c=g\in [G,G]$; see \cite[Proposition 2.10]{Cal:sclbook}.
	Moreover, we can restrict to monotone admissible surfaces (\cite[Proposition 2.13]{Cal:sclbook}), which is what we do in the sequel.
	It is a fact that $\scl_G$ extends continuously in a unique way to a semi-norm on $B_1(G)$; see \cite[Section 2.6]{Cal:sclbook}.
	If $[c]\neq 0\in H_1(G;\R)$, then we define $\scl_G(c)\defeq +\infty$.	
\end{defn}

The basic properties below are well known and easily follow from the definition.
\begin{lemma}\label{lemma: scl basic prop}
	For any group $G$ and $g\in G$, we have
	\begin{enumerate}
		\item $\scl_G(g^n)=n\cdot \scl_G(g)$ for all $n\in \Z_+$, and in particular $\scl_G(g)=0$ for any $g$ of finite order;
		\item $\scl_G(g)\ge\scl_H(f(g))$ for any homomorphism $f:G\to H$;
		\item $\scl_G(g)$ is invariant under automorphisms of $G$, and in particular it is conjugation invariant;
		\item $\scl_G(g)=0$ if $g^n$ is conjugate to $g^{-n}$ for some $n\in\Z_+$.
	\end{enumerate}
\end{lemma}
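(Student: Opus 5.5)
We prove the four items in order, using the algebraic definition $\scl_G(g)=\lim_n \cl_G(g^n)/n$ where convenient. If $g\notin[G,G]$ but some power lies in $[G,G]$, we use the usual convention $\scl_G(g)=\scl_G(g^k)/k$. Equivalently, we can use the topological definition via admissible surfaces from Definition~\ref{def: admissible surfaces}, extended to chains with homologically torsion image.

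For (1), suppose first $g^n\in[G,G]$. Taking the limit along the subsequence $m\mapsto nm$ gives
$$\scl_G(g^n)=\lim_m \frac{\cl_G(g^{nm})}{m}=n\lim_m\frac{\cl_G(g^{nm})}{nm}=n\,\scl_G(g),$$
since a subsequence of a convergent sequence has the same limit. In the topological language, an admissible surface for $g^n$ of degree $m$ is the same as an admissible surface for $g$ of degree $nm$, so the infimum scales by $n$. If $g$ has finite order $k$, then $g^k=e$ and $\scl_G(e)=0$, because the disk bounds $e$ and contributes $\chi^-=0$ (equivalently, $\cl(e)=0$). Hence $\scl_G(g)=\scl_G(g^k)/k=0$.

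For (2), a homomorphism $f$ sends a product of $m$ commutators equal to $g^n$ to a product of $m$ commutators equal to $f(g)^n$. So $\cl_H(f(g)^n)\le\cl_G(g^n)$; dividing by $n$ and taking limits gives the inequality. Topologically, we compose an admissible surface with a map $X_G\to X_H$ inducing $f$; this does not change $\chi^-$ or the degree. Applying (2) to an automorphism $\phi$ and to its inverse gives equality, which proves (3). Conjugation is an inner automorphism, so conjugation invariance follows.

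For (4), we first show $\scl_G(h)=0$ whenever $h$ is conjugate to $h^{-1}$. Write $h^{-1}=tht^{-1}$. Then $h^2=h\cdot (tht^{-1})^{-1}=[h,t]$ up to the orientation convention, so $\cl_G(h^2)\le 1$. For general powers, $h^{-m}=th^mt^{-1}$ as well, so $h^{2m}=h^m t h^{m}t^{-1}$ after rearranging, giving $\cl_G(h^{2m})\le 1$ for all $m$. Hence $\scl_G(h)=\lim_m\cl_G(h^{2m})/2m=0$. Now apply this with $h=g^n$, which is conjugate to $g^{-n}$ by hypothesis; by (1), $n\,\scl_G(g)=\scl_G(g^n)=0$.

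The only mildly delicate point is the bookkeeping of conventions when $g\notin[G,G]$, namely defining $\scl$ through a power or through the chain/surface definition. Once that convention is fixed, each item is a one-line consequence of the definition.
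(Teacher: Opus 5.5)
Your argument is correct and is the routine derivation from the definition that the paper has in mind; the paper gives no proof and simply states that these properties follow easily from the definition. Two small slips, neither load-bearing. In (4) the identity should read $h^{2m}=h^m\cdot(th^mt^{-1})^{-1}=h^m t h^{-m}t^{-1}=[h^m,t]$, since as written $h^m t h^m t^{-1}=e$. In (1), an admissible surface for $g^n$ of degree $m$ is admissible for $g$ of degree $nm$, but not conversely, because the boundary wrapping numbers need not be divisible by $n$; so that step should rest on your algebraic subsequence argument rather than the topological remark.
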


We consider chains up to the equivalence relation given by the subspace $H(G)\le B_1(G)$ spanned by chains of the forms below:
\begin{enumerate}
	\item $g^n-n\cdot g$, for any $g\in G$ and $n\in \Z$;
	\item $hgh^{-1}-g$, for any $g,h\in G$.
\end{enumerate}
Equivalent chains have the same scl \cite[Section 2.6]{Cal:sclbook}, so scl is a well defined semi-norm on the quotient $B_1^H(G)\defeq B_1(G)/H(G)$.
Up to equivalence, we can always write a chain as $c=\sum_i c_i g_i$ with each $c_i>0$ and no $g_i$ is conjugate to a (positive or negative) power of $g_j$ for $j\neq i$.

Scl of chains appears naturally in the index formula \cite[Corollary 2.81]{Cal:sclbook} below about scl in finite index subgroups.
\begin{lemma}[Index formula]\label{lemma: index formula}
	Let $\Gamma$ be a finite index normal subgroup of $G$ and denote the quotient by $H=G/\Gamma$. Then for any $\gamma\in \Gamma$ we have
	$$\scl_G(\gamma)=\frac{1}{|H|}\cdot \scl_\Gamma(\sum_{h\in H}h\gamma h^{-1}).$$
\end{lemma}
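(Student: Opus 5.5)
The index formula is a statement about finite covers, so I would prove it topologically with the admissible-surface definition of $\scl$ (Definition~\ref{def: admissible surfaces}), showing $\le$ and $\ge$ separately. Let $X$ be a space with $\pi_1(X)=G$ and let $p\colon \tilde X\to X$ be the regular cover corresponding to $\Gamma$, with deck group $H$. Write $c=\sum_{h\in H}h\gamma h^{-1}$. First I note that $\gamma$ is null-homologous in $G$ (over $\R$) exactly when $c$ is null-homologous in $\Gamma$; this follows from the transfer map. So in the infinite case both sides are $+\infty$, and from now on I assume both are finite.

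\emph{Inequality $\scl_G(\gamma)\le \frac{1}{|H|}\scl_\Gamma(c)$.} Take a monotone admissible surface $f\colon S\to\tilde X$ for $c$ of degree $n$ and compose with $p$. Each boundary component of $S$ maps to a loop in $\tilde X$ representing a conjugate of $(h\gamma h^{-1})^k$ in $\Gamma$. Its image in $X$ therefore represents a conjugate of $\gamma^k$ in $G$. The chain $c$ has $|H|$ terms, each of which is $G$-conjugate to $\gamma$, so the boundary sums to $n|H|\gamma$. Thus $p\circ f$ is admissible for $\gamma$ of degree $n|H|$ with the same $\chi^-$, which gives the inequality after taking the infimum.

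\emph{Inequality $\scl_\Gamma(c)\le |H|\scl_G(\gamma)$.} Take a monotone admissible $f\colon S\to X$ for $\gamma$ of degree $n$ and pull back the cover: let $\tilde S=f^*\tilde X$, a possibly disconnected $|H|$-sheeted cover of $S$ with a lift $\tilde f\colon\tilde S\to\tilde X$. Then $\chi^-(\tilde S)=|H|\chi^-(S)$; here one checks that disk and sphere components of $\tilde S$ only cover disk and sphere components of $S$. Now consider a boundary component $\beta$ of $S$ representing $\gamma^k$. Since $\gamma\in\Gamma$, the element $\gamma^k$ lies in $\Gamma$, so $\beta$ lifts to exactly $|H|$ closed curves, one in each sheet. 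The lift at the sheet labelled $h$ represents the $\Gamma$-conjugacy class of $h\gamma^k h^{-1}=(h\gamma h^{-1})^k$. Summing over boundary components, the boundary of $\tilde S$ is $n\sum_h h\gamma h^{-1}=n\cdot c$, so $\tilde S$ is admissible for $c$ of degree $n$. Hence $\scl_\Gamma(c)\le -\chi^-(\tilde S)/2n=|H|\cdot(-\chi^-(S)/2n)$, and taking the infimum gives the claim.

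The main obstacle is the bookkeeping of conjugacy classes in the lifting step. Lifts of a boundary loop are only determined up to $\Gamma$-conjugacy together with a choice of coset representative $h$. I therefore need to verify that, as $h$ ranges over the cosets, the lifts give each term $h\gamma h^{-1}$ exactly once, and that the chain $c$ is well defined up to the equivalence generated by conjugation in $\Gamma$. This is where normality of $\Gamma$ is used: it guarantees that each $h\gamma h^{-1}$ lies in $\Gamma$ and that the cover is regular. Everything else is a direct application of the definitions.
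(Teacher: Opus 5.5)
Your proof is correct and complete. Composing an admissible surface for $c$ with the cover gives $\scl_G(\gamma)\le\frac{1}{|H|}\scl_\Gamma(c)$, and pulling the cover back along an admissible surface for $\gamma$ gives the reverse inequality. The bookkeeping you flag is settled by reindexing: if a boundary loop represents $x\gamma^kx^{-1}$, its $|H|$ lifts represent the $\Gamma$-classes of $(hx)\gamma^k(hx)^{-1}$, and $h\Gamma\mapsto hx\Gamma$ is a bijection of $H$ because $\Gamma$ is normal.

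The paper does not prove this lemma itself; it quotes it from \cite[Corollary~2.81]{Cal:sclbook}, and your covering-space argument is the standard proof of that result.
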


Now we turn to relative stable commutator length; see \cite{Chen:sclBS,CH:sclgapgog_new} for more details. 
\begin{defn}\label{def: relative scl}
	For a group $G$ and a collection of conjugacy classes of subgroups $\{G_\lambda\}_{\lambda\in\Lambda}$, the relative scl is defined as
	$$\scl_{(G,\{G_\lambda\})}(c)=\inf_{c_\lambda} \scl_G(c+\sum_\lambda c_\lambda),$$
	where the infimum is taken over all choices of chains $c_\lambda \in C_1(G_\lambda)$ across $\lambda\in\Lambda$.
\end{defn}

Note that it suffices to consider those choices of $c_\lambda$'s such that $c+\sum c_\lambda$ is null-homologous in $G$, as otherwise the scl value is $+\infty$.

The basic properties below easily follow from the definition and basic properties of scl.
\begin{lemma}\label{lemma: relative scl basic prop}
	Fix $G$ and $\{G_\lambda\}_{\lambda\in\Lambda}$ as above.
	\begin{enumerate}
		\item $\scl_{(G,\{G_\lambda\})}(g^n)=n\cdot \scl_{(G,\{G_\lambda\})}(g)$ for all $g\in G$;
		\item $\scl_{(G,\{G_\lambda\})}(g)=0$ if some nonzero power of $g$ is conjugate into some $G_\lambda$;
		\item For any chain $c\in C_1(G)$, we have $\scl_G(c)\ge\scl_{(G,\{G_\lambda\})}(c)$. Thus $\scl_{(G,\{G_\lambda\})}(c)=0$ if $\scl_G(c)=0$.
		In particular, $\scl_{(G,\{G_\lambda\})}(g)=0$ for any $g\in G$ with $g^n$ conjugate to $g^{-n}$ for some $n\in\Z_+$.
	\end{enumerate}
\end{lemma}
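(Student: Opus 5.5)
Each item follows directly from Definition~\ref{def: relative scl} together with the corresponding property of $\scl_G$ in Lemma~\ref{lemma: scl basic prop} and the facts that $\scl_G$ is a semi-norm on $B_1(G)$ and is constant on equivalence classes of chains (the subspace $H(G)$). I will treat the three items in the order (3), (1), (2), since (3) is immediate and the others use the same style of argument.

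For (3), take $c_\lambda=0$ for all $\lambda$ in the infimum; this gives $\scl_{(G,\{G_\lambda\})}(c)\le \scl_G(c)$. If $g^n$ is conjugate to $g^{-n}$, then $\scl_G(g)=0$ by Lemma~\ref{lemma: scl basic prop}(4), so the relative value is also $0$.

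For (1), note that $g^n - n g\in H(G)$. For any choice of chains $c_\lambda\in C_1(G_\lambda)$, the chain $g^n+\sum c_\lambda$ is therefore equivalent to $n\bigl(g+\sum \tfrac1n c_\lambda\bigr)$. Homogeneity of the semi-norm then gives
\[
\scl_G\Bigl(g^n+\sum_\lambda c_\lambda\Bigr)=n\,\scl_G\Bigl(g+\sum_\lambda \tfrac1n c_\lambda\Bigr).
\]
Null-homologous chains correspond to null-homologous chains under this rescaling, and the value $+\infty$ is preserved as well. Since $c_\lambda\mapsto \tfrac1n c_\lambda$ is a bijection of $C_1(G_\lambda)$, taking the infimum on both sides yields the claimed equality. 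The case $n=0$ is not needed, since the statement concerns positive powers.

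For (2), suppose $g^n=hah^{-1}$ with $a\in G_\lambda$ and $n\ne0$; using (1) together with the symmetry $g\mapsto g^{-1}$, we may assume $n>0$. By (1) it suffices to show $\scl_{(G,\{G_\lambda\})}(g^n)=0$. Choose $c_\lambda=-a\in C_1(G_\lambda)$. Then $g^n-a = hah^{-1}-a\in H(G)$, so this chain is null-homologous and equivalent to the zero chain, hence has $\scl_G$ equal to $0$. The infimum is therefore $0$.

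The only point needing care is that the conjugacy classes of subgroups are fixed only up to conjugacy, so I will treat ``conjugate into $G_\lambda$'' as conjugate into a fixed representative. This matches the choice of $h$ above, and I do not expect any genuine obstacle here.
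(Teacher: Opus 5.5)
Your proof is correct and is the same routine verification the paper has in mind: it gives no proof beyond saying the properties follow from Definition~\ref{def: relative scl} and the basic properties of scl, namely homogeneity of the semi-norm and invariance under $H(G)$. One simplification: for negative $n$ in (2), replace $a$ by $a^{-1}\in G_\lambda$, so that $g^{|n|}=ha^{-1}h^{-1}$; this avoids the $g\mapsto g^{-1}$ symmetry of relative scl, which you invoked without proving.
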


Relative scl can also be computed using \emph{relative} admissible surfaces; see \cite[Proposition 2.9]{Chen:sclBS}.
\begin{prop}\label{prop: relative adm surf}
	Fix any path-connected space $X$ with $\pi_1(X)=G$, and a collection of path-connected subspaces $X_\lambda$ with $\pi_1(X_\lambda)$ conjugate to $G_\lambda$ for each $\lambda\in \Lambda$.
	A relative admissible surface $S$ of degree $n(S)$ for a rational chain $c=\sum_i c_i g_i$ is an admissible surface of degree $n(S)$ for some chain $c+\sum_\lambda c_\lambda$ with $c_\lambda\in C_1(G_\lambda)$. Intuitively, these are surfaces that are admissible for $c$ except that there are some extra boundary components that are mapped to $X_\lambda$ (up to homotopy).
	Then the following infimum over all relative admissible surfaces gives relative scl:
	$$\scl_{(G,\{G_\lambda\})}(c)=\inf_S \frac{-\chi^-(S)}{2n(S)}.$$
\end{prop}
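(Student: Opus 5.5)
The plan is to compare the two infima directly, inequality by inequality, using Definition~\ref{def: relative scl} and the topological definition of $\scl_G$ in Definition~\ref{def: admissible surfaces}.

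For the inequality ``$\le$'', fix chains $c_\lambda\in C_1(G_\lambda)$ such that $c+\sum_\lambda c_\lambda$ is null-homologous. It suffices to treat rational $c_\lambda$: scl is a continuous seminorm on $B_1(G)$, and the rational chains in the finite-dimensional space of allowed corrections that make $c+\sum c_\lambda$ null-homologous are dense there, because this affine condition is defined over $\Q$ when $c$ is rational. Any admissible surface for $c+\sum_\lambda c_\lambda$ is, by definition, a relative admissible surface for $c$. So the infimum over relative admissible surfaces is at most $\scl_G(c+\sum c_\lambda)$, and taking the infimum over the $c_\lambda$ gives the bound.

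For ``$\ge$'', take a relative admissible surface $S$ of degree $n$. Its boundary components map, up to homotopy, either to powers $g_i^k$ whose total is $n\cdot c$, or into some $X_\lambda$. The latter represent conjugacy classes of elements $h_j$ of $G_\lambda$, up to conjugating $G_\lambda$. Set $c_\lambda=\frac1n\sum_j h_j$, summing over the boundary components mapping to $X_\lambda$; conjugation is harmless, since it changes chains only by elements of $H(G)$. Then $S$ is an admissible surface of degree $n$ for the rational chain $c+\sum c_\lambda$, so
$$\frac{-\chi^-(S)}{2n}\ge \scl_G\Big(c+\sum_\lambda c_\lambda\Big)\ge \scl_{(G,\{G_\lambda\})}(c).$$
Taking the infimum over $S$ finishes this direction.

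The main obstacle is bookkeeping rather than mathematics. One point is the treatment of the basepoint and conjugacy, since $\pi_1(X_\lambda)$ is only conjugate to $G_\lambda$. The other is the reduction to rational $c_\lambda$ in the first inequality, which needs continuity of scl on $B_1(G)$ together with the fact that $H_1$ being rationally defined makes the rational solutions dense. I would handle the rationality step first, by writing the homology constraint as a rational linear system.
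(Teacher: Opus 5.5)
Your proof is correct and is essentially the standard argument behind \cite[Proposition 2.9]{Chen:sclBS}, which the paper cites rather than reproves. The bound $\frac{-\chi^-(S)}{2n(S)}\ge \scl_{(G,\{G_\lambda\})}(c)$ is in fact immediate from the stated definition, since a relative admissible surface is by definition admissible for some (necessarily rational) chain $c+\sum_\lambda c_\lambda$, so reconstructing the $c_\lambda$ from the boundary components is not needed. The only substantive step is the reverse bound, where you correctly approximate real $c_\lambda$ by rational ones. That approximation uses continuity of $\scl_G$ on finite-dimensional subspaces of $B_1(G)$ and density of rational solutions to the $\Q$-linear homological constraint.
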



\subsection{Quasimorphisms and Bavard's duality}
\begin{defn}\label{def: quasimorphisms}
	A function $\phi: G\to \R$ is a \emph{quasimorphism} if
	$$D(\phi)\defeq \sup_{g,h\in G} |\phi(g)+\phi(h)-\phi(gh)|<\infty,$$
	in which case we call $D(\phi)$ the \emph{defect} of $\phi$.
	We say $\phi$ is \emph{homogeneous} if $\phi(g^n)=n\cdot \phi(g)$ for all $g\in G$ and $n\in\Z$.
	Denote by $Q(G)$ the $\R$-vector space of homogeneous quasimorphisms.
\end{defn}

Homogeneous quasimorphisms are always conjugate invariant, i.e. $\phi(hgh^{-1})=\phi(g)$ for all $g,h\in G$.

One can always homogenize a quasimorphism to obtain a homogeneous quasimorphism.
\begin{lemma}\label{lemma: homogenization}
	For any quasimorphism $\phi: G\to \R$, the homogenization $\bar{\phi}$ defined as
	$$\bar\phi(g)\defeq \lim_n \frac{\phi(g^n)}{n}$$
	is a well defined homogeneous quasimorphism. Moreover, we have $D(\bar{\phi})\le 2D(\phi)$.
\end{lemma}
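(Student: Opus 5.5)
The plan is to prove the two assertions of Lemma~\ref{lemma: homogenization} in turn: first that $\bar\phi$ is well defined and homogeneous, then the defect bound $D(\bar\phi)\le 2D(\phi)$. Write $D=D(\phi)$.

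\emph{Well-definedness.} Set $a_n=\phi(g^n)$. Applying the defect inequality to $g^m$ and $g^n$ gives $|a_{m+n}-a_m-a_n|\le D$. Hence the sequence $a_n+D$ is subadditive and $a_n-D$ is superadditive. Fekete's lemma then shows that $\lim_n a_n/n$ exists in $[-\infty,\infty)$. Applying the same reasoning to $a_n-D$ rules out $-\infty$, so the limit is finite. Fekete's lemma also yields the useful bound $|\bar\phi(g)-\phi(g)|\le D$. To see this, note that induction on $n$ gives $|\phi(g^n)-n\phi(g)|\le (n-1)D$; dividing by $n$ and letting $n\to\infty$ proves the bound. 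Since $\bar\phi$ stays within $D$ of $\phi$, it is a quasimorphism, with $D(\bar\phi)\le 3D$ for the moment.

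\emph{Homogeneity and conjugation invariance.} For $k>0$ we have $\bar\phi(g^k)=\lim_n \phi(g^{kn})/n=k\bar\phi(g)$, by passing to a subsequence. For negative powers, note that $|\phi(g^n)+\phi(g^{-n})-\phi(e)|\le D$, and that $\phi(e)$ is bounded. It follows that $\bar\phi(g^{-1})=-\bar\phi(g)$. Conjugation invariance comes from the estimate
$$|\phi(hg^nh^{-1})-\phi(g^n)|\le 2D+|\phi(h)+\phi(h^{-1})|,$$
which is a bound independent of $n$. Dividing by $n$ and taking the limit gives $\bar\phi(hgh^{-1})=\bar\phi(g)$.

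\emph{Sharp defect bound.} This is the main obstacle, since the crude estimate only gives $3D$. The standard route is to show that a homogeneous quasimorphism $\psi$ satisfies $|\psi([a,b])|\le D(\psi)$, and more generally that
$$|\psi(g)+\psi(h)-\psi(gh)|=\lim_n \tfrac1n\bigl|\psi(g^n)+\psi(h^n)-\psi((gh)^n)\bigr|.$$
One then compares $(gh)^n$ with $g^nh^n$. Using conjugation invariance, the element $(gh)^n(g^nh^n)^{-1}$ can be written as a product of at most roughly $n/2$ commutators. Alternatively, there is a cleaner argument that stays with $\phi$ itself. First expand $\phi((gh)^{2n})$ and $\phi(g^{2n}h^{2n})$ up to bounded error. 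Then use the identity $(gh)^{2n}=\prod$ of conjugates, together with the bound $|\phi(xyx^{-1}y^{-1})|\lesssim$ per commutator. This gives $|\phi((gh)^n)-\phi(g^n)-\phi(h^n)|\le 2nD+O(1)$, and dividing by $n$ yields $D(\bar\phi)\le 2D$.

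I would try the commutator-counting approach first, following the argument in Calegari's book (Lemma 2.58). Its key intermediate claim is that $\phi$ of a product of $k$ conjugates of powers of $g$ and $h$ is controlled linearly, with constant $2D$ per step.
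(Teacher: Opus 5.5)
The paper states this lemma as standard background without proof, so your proposal has to stand on its own. Your treatment of the existence of the limit, the bound $|\bar\phi-\phi|\le D$, homogeneity and conjugation invariance is correct. One slip: $|\bar\phi-\phi|\le D$ gives the crude bound $D(\bar\phi)\le 4D$, not $3D$.

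The gap is in the only substantive part, $D(\bar\phi)\le 2D(\phi)$. There you give an outline whose two quantitative inputs are exactly what determine the constant, and neither is supplied.

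First, you assert without proof that $(gh)^n(g^nh^n)^{-1}$ is a product of at most $n/2+O(1)$ commutators, yet this is the whole source of the factor $2$. The obvious induction $(gh)^n=c_{n-1}\cdot g^{n-1}[h^{n-1},g]g^{1-n}\cdot g^nh^n$ gives only $n-1$ commutators, which leads to $D(\bar\phi)\le 4D$. You need a real argument here. For instance, take $n$ odd (a subsequence suffices for the limit). The connected $\Z/n$-cover of a pair of pants with boundary $\alpha,\beta,(\alpha\beta)^{-1}$, defined by $\alpha,\beta\mapsto 1$, has genus $(n-1)/2$ and three boundary components. Mapping it by $\alpha\mapsto g$, $\beta\mapsto h$ writes $(gh)^n(g^nh^n)^{-1}$ as a product of $(n-1)/2+3$ commutators. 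Alternatively, combine Culler's bound $\cl([a,b]^n)\le\lfloor n/2\rfloor+1$ with Bavard's lemma $D(\psi)=\sup_{a,b}|\psi([a,b])|$ for homogeneous $\psi$.

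The inequality $|\psi([a,b])|\le D(\psi)$ that you single out as ``the standard route'' is the easy direction and is not the one needed. Worse, if you run your first alternative entirely with $\psi=\bar\phi$ and bound $\psi$ on the commutator product via $D(\psi)$, you get only the vacuous inequality $nD(\psi)\le (n+O(1))D(\psi)$. The commutator product must be estimated with $\phi$ itself and the result transferred using $|\bar\phi-\phi|\le D$.

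Second, you leave the per-commutator estimate blank, but its constant matters just as much. You need $|\phi(c)|\le 4kD+O(1)$ for a product $c$ of $k$ commutators. Expanding $\phi([x,y])$ directly for a general $\phi$ only gives $|\phi([x,y])-\phi(e)|\le 4D$, and $\phi(e)$ need not vanish, so the naive count does not reach the constant $2$.

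The clean fix is to replace $\phi$ by $\phi'(g)=\tfrac12\bigl(\phi(g)-\phi(g^{-1})\bigr)$. This satisfies $D(\phi')\le D(\phi)$, $\overline{\phi'}=\bar\phi$ and $\phi'(x^{-1})=-\phi'(x)$. Then:
\begin{itemize}
\item $|\phi'([x,y])|\le|\phi'(xy)-\phi'(yx)|+D\le 3D$, using $\phi'(x^{-1}y^{-1})=-\phi'(yx)$;
\item hence $|\phi'(c)|\le(4k-1)D$;
\item hence $|\phi'((gh)^n)-\phi'(g^n)-\phi'(h^n)|\le(4k+1)D$.
\end{itemize}
With $k\le n/2+O(1)$, dividing by $n$ and letting $n\to\infty$ gives exactly $|\bar\phi(gh)-\bar\phi(g)-\bar\phi(h)|\le 2D$. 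Your ``constant $2D$ per step'' for products of conjugates of powers of $g$ and $h$ does not match this accounting, and should be replaced by it.
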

We sometimes write $\bar\phi$ to denote a homogeneous quasimorphism, to emphasize the fact that it is homogeneous, 
even though it is not directly defined as the homogenization of some quasimorphism $\phi$.

We have a pairing $\phi(c)\defeq\sum c_i\phi(g_i)$ for any chain $c=\sum c_i g_i$ and a homogeneous quasimorphism $\phi\in Q(G)$.
The Bavard duality shows that this gives a duality between scl and homogeneous quasimorphisms.
\begin{thm}[Bavard's duality \cite{bavard}]\label{thm: Bavard}
	Fix a group $G$ and any chain $c\in B_1(G)$, we have
	$$\scl_G(c)=\sup_{\phi\in Q(G)} \frac{|\phi(c)|}{2D(\phi)}.$$
	In particular, we have $\scl_G(c)\ge \frac{|\phi(c)|}{2D(\phi)}$ for any $\phi\in Q(G)$.
\end{thm}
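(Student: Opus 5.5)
The statement to prove is Bavard's duality (Theorem~\ref{thm: Bavard}), and the plan has two halves: an easy inequality from admissible surfaces, and a reverse inequality from Hahn--Banach.

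\textbf{The easy inequality.} We show that $\scl_G(c)\ge |\phi(c)|/2D(\phi)$ for every $\phi\in Q(G)$. Take a monotone admissible surface $f:S\to X$ of degree $n$ for $c$, and assume $S$ is connected without sphere or disk components; disk components only contribute torsion-type terms, on which $\phi$ vanishes.

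A connected oriented surface of genus $h$ with $b$ boundary components has $\pi_1$ presented with generators $a_j,b_j,\gamma_k$ and the single relation $\prod_j[a_j,b_j]=\gamma_1\cdots\gamma_b$. Here each $\gamma_k$ maps to a conjugate of $g_{i_k}^{m_k}$.

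We need the standard facts that a homogeneous $\phi$ is conjugation invariant and satisfies $|\phi([a,b])|\le D(\phi)$. Applying $\phi$ to the relation and using the defect $b-1+h$ times gives
\[
|\phi(n c)|\le (2h+b-2)D(\phi)+\text{(small)}\le -\chi(S)\,D(\phi).
\]
A more careful count, splitting the product into commutators, gives $|\sum_k\phi(\gamma_k)|\le (2h-2+b)D(\phi)$ when $b\ge1$; this is the classical estimate. Dividing by $2n$ and taking the infimum over $S$ yields the inequality.

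\textbf{The reverse inequality.} Here we use duality for normed vector spaces. The quantity $\scl_G$ is a seminorm on $B_1^H(G)$. Given $c$ with $\scl_G(c)>0$, Hahn--Banach provides a linear functional $\ell$ on $B_1^H(G)$ with $\ell(c)=\scl_G(c)$ and $|\ell|\le \scl_G$.

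We must extend $\ell$ to a homogeneous function on $G$, i.e.\ to all of $C_1^H(G)$, including the non-null-homologous part. The approach is to choose a complement modulo $H_1(G;\R)$ and extend $\ell$ linearly, so that it stays bounded on the "defect chains" $g+h-gh$. These chains lie in $B_1$ and satisfy $\scl(g+h-gh)\le 1/2$, since a pair of pants is admissible for $g+h-(gh)$ with $-\chi=1$.

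Setting $\phi(g)=\ell(g)$ on $C_1^H(G)$ then gives a homogeneous function satisfying
\[
|\phi(g)+\phi(h)-\phi(gh)|=|\ell(g+h-gh)|\le \tfrac12.
\]
Hence $\phi\in Q(G)$ with $D(\phi)\le 1/2$, and
\[
\frac{\phi(c)}{2D(\phi)}\ge \scl_G(c).
\]
Homogeneity and conjugation invariance hold because $\ell$ kills $H(G)$.

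\textbf{Main obstacle.} The key step is controlling the extension from $B_1^H$ to $C_1^H$. The defect chains already lie in $B_1$, so the bound $D(\phi)\le 1/2$ only requires the value of $\ell$ on $B_1$, and the extension can be arbitrary linear. The real work is therefore in verifying $\scl(g+h-gh)\le 1/2$ via the pair of pants, and in checking that $\scl$ is genuinely a seminorm on $B_1^H$, so that Hahn--Banach applies. For the seminorm property we use the cited \cite[Section 2.6]{Cal:sclbook}.
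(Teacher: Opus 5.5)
The paper gives no proof of this theorem. It quotes it from \cite{bavard} and remarks that only the inequality $\scl_G(c)\ge|\phi(c)|/2D(\phi)$ is ever used. Your proposal is a correct, self-contained proof. Its first half is exactly the part the paper relies on. Its second half is the standard Hahn--Banach argument, and you have isolated what makes it work: each defect chain $g+h-gh$ already lies in $B_1(G)$ and is bounded by a pair of pants. Hence $|\ell(g+h-gh)|\le\scl_G(g+h-gh)\le 1/2$ no matter how $\ell$ is extended from $B_1^H(G)$ to $C_1(G)/H(G)$. This route needs no bounded cohomology or filling-norm machinery and works directly for real chains. When $\scl_G(c)>0$ it even shows the supremum is attained, by some $\phi$ with $D(\phi)=1/2$. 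What it imports is the seminorm property of $\scl_G$ on $B_1^H(G)$, including the continuous extension to real chains. That property is elementary from the surface definition, so there is no circularity.

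A few details in the easy half should be tightened.
\begin{enumerate}
\item The line ``using the defect $b-1+h$ times'' with a ``(small)'' error term is not right as written.
\item For $h\ge1$ the correct count is:
\begin{itemize}
\item use $|\phi([a_j,b_j])|\le D(\phi)$ for each of the $h$ commutators;
\item spend $h-1$ defects splitting $\prod_j[a_j,b_j]$;
\item spend $b-1$ defects splitting $\gamma_1\cdots\gamma_b$.
\end{itemize}
This totals $2h+b-2=-\chi(S)$.
\item For $h=0$ that count gives $b-1$, one too many. Rewrite the relation as $\gamma_1\cdots\gamma_{b-1}=\gamma_b^{-1}$ and use $\phi(\gamma_b^{-1})=-\phi(\gamma_b)$ to get $b-2$. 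Alternatively, pass to finite covers to absorb additive constants.
\item Sum the estimate over components rather than assuming $S$ is connected. A single component only carries part of $nc$.
\item Admissible surfaces only handle rational $c$, so obtain the inequality for real $c\in B_1(G)$ by approximation. Use continuity of $\scl_G$ on finite-dimensional subspaces and linearity of $\phi$.
\item In the reverse half, note that $D(\phi)>0$ is forced. Indeed $\phi(c)=\scl_G(c)>0$, while any homomorphism to $\R$ vanishes on $B_1(G)$.
\end{enumerate}
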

We will only use the inequality above, which is the easy direction.

\subsection{$2$-dimensional Orbifolds}
Here we consider compact $2$-dimensional orbifolds $B$ with only finitely many cone points as singularities.
Topologically, the underlying space is a compact surface $S=S(B)$, not necessarily orientable.
In particular, the genus and the number of boundary components of $B$ are defined just as for the underlying surface $S$.
In addition, in the interior of $S$, there are finitely many \emph{cone points} of orders $o_1,\ldots, o_n\ge2$ respectively, where we think of a neighborhood of each such point
as modeled on $D^2$ mod the standard $\Z/o_i$ rotation action, $1\le i\le n$.
The \emph{orbifold Euler characteristic} is defined as
$$\chi_o(B)=\chi(S(B))-\sum_i (1-\frac{1}{o_i}).$$

We focus on orbifolds $B$ with $\chi_o(B)<0$, as the other cases have amenable (actually virtually abelian) orbifold fundamental group $\pi_1(B)$, which implies that $\scl_{\pi_1(B)}$ vanishes.
When $\chi_o(B)<0$, each such orbifold has a finite orbifold cover that is an honest surface, or equivalently, it can be realized as a compact surface (with negative Euler characteristic) modulo a finite group action.
Moreover, $B$ has a hyperbolic structure, that is, we can think of $B$ as a quotient of the hyperbolic plane by a discrete subgroup $\pi_1(B)$ of the isometry group $\Isom(\Hbb^2)$.

We are interested in $\scl_{(B,\partial B)}$, which is our shorthand of scl of $\pi_1(B)$ \emph{relative} to the collection of peripheral cyclic subgroups. We also write $\scl_B$ for $\scl_{\pi_1(B)}$.

These orbifold groups have very explicit presentations and splittings (as graphs of groups) just like the fundamental groups of compact surfaces, except that the cone points give elements of finite order.
We will recall the relevant presentations or splittings when we need them.

\section{Spectral gaps for chains in free products of cyclic groups}\label{sec: chain gap}
In this section we will prove some key results (Lemmas~\ref{lemma: Brooks} and \ref{lemma: cyclic conjugate minimal}) regarding certain counting quasimorphisms, which are essential in our proof of the spectral gap of orbifolds relative to the boundary. 
As a byproduct, we use such quasimorphisms to prove a uniform spectral gap for chains in free products of cyclic groups (Theorem~\ref{thmA: free product chain gap}). 
This is a generalization of \cite{tao} and uses analogous arguments.

Suppose $G = Z_1 \star \cdots \star Z_m \star (\Z/o_1) \star \cdots \star (\Z/o_n)$ with $Z_i \cong \Z$. Let $\xtt_1, \ldots, \xtt_m$ be the generators of $Z_1 \star \cdots \star Z_m$ and let $\ytt_1, \ldots, \ytt_n$ be the generators of $(\Z/o_1) \star \cdots \star (\Z/o_n)$ with orders $o_1, \ldots, o_n$.
Consider the alphabet $S = \{ \xtt_1^{\pm 1}, \ldots, \xtt_m^{\pm 1}, \ytt_1^{i_1}, \ldots, \ytt_n^{i_n} \}$ where $0 < i_j < o_j$
for all $j \in \{1, \ldots, n \}$. We say a word $w$ is reduced if 
no $\xtt_j$ is adjacent to $\xtt_j^{-1}$ and no $\ytt_j^{i_1}$ is adjacent to $\ytt_j^{i_2}$.
Denote the word length of $w$ by $|w|$, and for any $g\in G$ let $|g|=|w|$ for any cyclically reduced representative $w$.

Every element $g \in G$ can be uniquely represented by a reduced word $\bar{g}$ in the alphabet $S$. For two elements $g,h \in G$ we define $C_g(h)$ to be the number of times that $\bar{g}$ occurs as a subword of $\bar{h}$. Finally we set $\phi_g(h) \defeq C_g(h) - C_{g^{-1}}(h)$ analogously to the Brooks quasimorphism first defined by Brooks in \cite{brooks}; see also \cite[Section 2.3.2]{Cal:sclbook}. Note that $C_g(h) = C_{g^{-1}}(h^{-1})$.

A word $w$ in $S$ is \emph{self-overlapping} if there are reduced words $u, v$, where $v$ is non-trivial, such that $w = v u v$ as a reduced word. Note that any periodic word $w$ (i.e. $w=v^k$ as a reduced word for $k\ge2$) is self-overlapping.

\begin{lemma}\label{lemma: Brooks}
Suppose the reduced word $\bar{g}$ representing $g \in G$ is cyclically reduced of length at least $2$
and not self-overlapping. Then
\begin{enumerate}
\item $\phi_g \col G \to \Z$ is a quasimorphism with $D(\phi_g) \leq 3$ and its homogenization satisfies $D(\bar{\phi}_g) \leq 6$.\label{item:defect}
\item If $g$ is not conjugate to $g^{-1}$ then $\bar{\phi}_g(g) = 1$.\label{item:homogeneous}
\item For every cyclically reduced word $w$ with $|w| \leq |g|$ representing the conjugacy class of $h\in G$, we have that either $w$ is a cyclic conjugate of $\bar{g}$ or $\bar{g}^{-1}$, or $\bar{\phi}_g(h) = 0$.\label{item:cyclic conjugates}
\end{enumerate} 
\end{lemma}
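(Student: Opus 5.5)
We follow the classical argument for Brooks counting quasimorphisms on free groups, adapted to the alphabet $S$, in which a letter $\ytt_j^{i}$ can merge with an adjacent $\ytt_j^{i'}$ into a single letter (or cancel when $i+i'\equiv 0 \bmod o_j$).

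\emph{Item (1).} For $a,b\in G$ with reduced words $\bar a,\bar b$, write $\bar a = a' t$ and $\bar b = t^{-1} b'$, where $t$ is the maximal cancelling part. Then $\overline{ab}$ is either $a'b'$ (no interaction) or $a'' z b''$, where $a'=a''\ytt_j^{i}$, $b'=\ytt_j^{i'}b''$, and $z=\ytt_j^{i+i'}$ is a single merged letter. The three reduced words $\bar a,\bar b,\overline{ab}^{-1}$ form a tripod (a degenerate triangle) with possibly one merged letter at the center. Every occurrence of $\bar g$ or $\bar g^{-1}$ that lies entirely in one leg contributes equally to both sides of $\phi_g(a)+\phi_g(b)-\phi_g(ab)$, since $t$ contributes to $\phi_g(a)$ and $t^{-1}$ to $\phi_g(b)$ with opposite signs. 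The defect therefore comes only from occurrences crossing a junction point, or meeting the merged letter, in one of the three words $\bar a$, $\bar b$, $\overline{ab}$.

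Because $\bar g$ is not self-overlapping, two distinct occurrences of $\bar g$ (or of $\bar g^{-1}$) in a reduced word cannot overlap in a way that both contain the same junction. More precisely, overlapping occurrences would force $\bar g = v u v$. Also, $\bar g$ and $\bar g^{-1}$ cannot both straddle the same junction with an overlap, as that would likewise produce a self-overlap. Hence each of the three words has at most one "bad" occurrence at its junction, which gives $D(\phi_g)\le 3$. Lemma~\ref{lemma: homogenization} then yields $D(\bar\phi_g)\le 6$. The main care point is the merged letter: we check that an occurrence using $z$ is counted only in $\overline{ab}$, so it is still at most one per word.

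\emph{Item (2).} Since $\bar g$ is cyclically reduced, $\overline{g^n}=\bar g^n$. Non-self-overlapping implies that the occurrences of $\bar g$ in $\bar g^n$ are exactly the $n$ obvious ones, so $C_g(g^n)=n$. An occurrence of $\bar g^{-1}$ in $\bar g^n$ of length $|g|$ would be a cyclic conjugate of $\bar g$ equal to $\bar g^{-1}$, which would make $g$ conjugate to $g^{-1}$; this is excluded. Hence $C_{g^{-1}}(g^n)=0$ and $\bar\phi_g(g)=1$. Length at least $2$ ensures that $\bar g$ is not a single letter merging with itself. If the last and first letters of $\bar g$ were both powers of the same $\ytt_j$, cyclic reducedness excludes this.

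\emph{Item (3).} Let $w$ be cyclically reduced with $|w|\le|g|$, representing $h$. Then $\overline{h^n}$ is conjugate to $w^n$, and $\bar\phi_g(h)=\lim \phi_g(w^n)/n$, which counts occurrences of $\bar g^{\pm1}$ in the cyclic word $w$ read periodically. Any occurrence of $\bar g$, of length $|g|\ge|w|$, in the bi-infinite word $w^\infty$ forces $\bar g$ to be a subword of length at least $|w|$ of a periodic word with period $|w|$. If $|w|=|g|$, this makes $w$ a cyclic conjugate of $\bar g$. If $|w|<|g|$, then $\bar g$ has period $|w|<|g|$, so $\bar g=vuv$ with $v$ a nonempty prefix, contradicting non-self-overlapping. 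The same reasoning applies to $\bar g^{-1}$. Hence either $w$ is a cyclic conjugate of $\bar g^{\pm1}$, or there are no occurrences at all, and then $\bar\phi_g(h)=0$.

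The main obstacle is the careful bookkeeping in item (1), where the merging of finite-order letters could create extra junction occurrences.
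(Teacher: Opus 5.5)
Your approach matches the paper's: the tripod $a=c_1^{-1}\ztt_1c_2$, $b=c_2^{-1}\ztt_2c_3$, $(ab)^{-1}=c_3^{-1}\ztt_3c_1$ with a possible central letter, cancellation of every occurrence lying away from the center, and non-self-overlap of $\bar g$ to control the junction terms. Items (2) and (3) are the same periodicity arguments; your border/period argument in (3) is a compact version of the paper's case split on $|w_s|+|w_e|$.

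There is, however, a false step in item (1). You claim that $\bar g$ and $\bar g^{-1}$ cannot both meet the same junction. If an occurrence of $\bar g$ overlaps an occurrence of $\bar g^{-1}$, the overlap $s$ is a prefix or suffix of $\bar g$ with $s=s^{-1}$ as words. This is not a border of $\bar g$, so non-self-overlap gives no contradiction. With torsion such $s$ exist, for example $s=\ytt_j^{o_j/2}$ or $s=\xtt\,\ytt_j^{o_j/2}\xtt^{-1}$.

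Concretely, in $\Z\star\Z/4$ take $\bar g=\xtt_1\ytt_1^2$. It is cyclically reduced, of length $2$, and not self-overlapping, and $\bar g^{-1}=\ytt_1^2\xtt_1^{-1}$. For $a=\xtt_1\ytt_1$ and $b=\ytt_1\xtt_1^{-1}$, the word $\overline{ab}=\xtt_1\ytt_1^2\xtt_1^{-1}$ contains both $\bar g$ and $\bar g^{-1}$ through the merged letter. With $o_1=2$, the word $\bar g=\xtt_2\xtt_1\ytt_1\xtt_1^{-1}$ shows the same phenomenon at an ordinary junction point. So a single word can have two bad occurrences, and your ``at most one bad occurrence per word'' fails. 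Your justification is valid only in the torsion-free case, where no nonempty reduced word equals its formal inverse.

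The bound $D(\phi_g)\le 3$ still holds with the bookkeeping the paper uses. Each junction word $(c_j')^{-1}\ztt_j c_{j+1}'$ has length at most $2|g|-1$. Since $\bar g$ and $\bar g^{-1}$ are each non-self-overlapping, it contains at most one occurrence of $\bar g$ and at most one of $\bar g^{-1}$, and these enter the defect with opposite signs. Hence each junction contributes $C_g-C_{g^{-1}}\in\{-1,0,1\}$, and summing over the three junctions gives $3$. Replace your sentence about $\bar g$ versus $\bar g^{-1}$ with this sign argument and the proof goes through.
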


This lemma is analogous to \cite[Lemmas 3.1 and 3.2]{tao}, which deals with the torsion-free case. In the absence of torsion one may deduce that $D(\phi_g) \leq 2$, yielding $D(\bar{\phi}_g) \leq 4$.

\begin{proof}
Observe that if $v, w$ are reduced words and $\ztt \in S \cup \{ \emptyset \}$ such that $v^{-1} \cdot \ztt \cdot  w$ is reduced then $C_g(v^{-1} \cdot \ztt \cdot w) = C_g(v^{-1})+C_g(w) + C_g((v')^{-1} \cdot \ztt \cdot w')$, where $v'$ is the prefix of length $|g|-1$ of $v$ if $|v| \geq |g|$ and $v' = v$ otherwise, and $w'$ is defined analogously.

For every $h_1, h_2 \in G$ there are words $c_1, c_2, c_3$ in $G$ and letters $\ztt_1, \ztt_2, \ztt_3 \in S \cup \{ \emptyset \}$ 
such that
\begin{eqnarray*}
h_1 &=& c_1^{-1} \ztt_1 c_2 \\
h_2 &=& c_2^{-1} \ztt_2 c_3 \\
(h_1 h_2)^{-1} &=& c_3^{-1} \ztt_3 c_1
\end{eqnarray*}

To see bullet (\ref{item:defect}), using the notation above, note that
\begin{eqnarray*}
\phi_g(h_1) &=&
C_g(c_1^{-1}) + C_g(c_2) + C_g((c_1')^{-1} \ztt_1 c_2') - C_{g^{-1}}(c_1^{-1}) - C_{g^{-1}}(c_2) - C_{g^{-1}}((c_1')^{-1} \ztt_1 c_2') \\
\phi_g(h_2) &=&
C_g(c_2^{-1}) + C_g(c_3) + C_g((c_2')^{-1} \ztt_2 c_3') - C_{g^{-1}}(c_2^{-1}) - C_{g^{-1}}(c_3) - C_{g^{-1}}((c_2')^{-1} \ztt_2 c_3') \\
\phi_g((h_1 h_2)^{-1}) &=&
C_g(c_3^{-1}) + C_g(c_1) + C_g((c_3')^{-1} \ztt_3 c_1') - C_{g^{-1}}(c_3^{-1}) - C_{g^{-1}}(c_1) - C_{g^{-1}}((c_3')^{-1} \ztt_3 c_1').
\end{eqnarray*}

Using that $C_g(u) = C_{g^{-1}}(u^{-1})$ we see that
\begin{eqnarray*}
\phi_g(h_1) + \phi_g(h_2) - \phi_g(h_1 h_2) &=& 
\phi_g(h_1) + \phi_g(h_2) + \phi_g((h_1 h_2)^{-1}) \\
&=& C_g((c_1')^{-1} \ztt_1 c_2') - C_{g^{-1}}((c_1')^{-1} \ztt_1 c_2') \\
&+& C_g((c_2')^{-1} \ztt_2 c_3') - C_{g^{-1}}((c_2')^{-1} \ztt_2 c_3') \\
&+& C_g((c_3')^{-1} \ztt_3 c_1') - C_{g^{-1}}((c_3')^{-1} \ztt_3 c_1').
\end{eqnarray*}
All of the terms $(c_j')^{-1} \ztt_j c_{j+1}'$ have word length strictly less than $2 |g|$. 
As $\bar{g}$ is not self-overlapping, we conclude that $C_g((c_j')^{-1} \ztt_j c_{j+1}') \in \{ 0, 1 \}$,
and similarly $C_{g^{-1}}((c_j')^{-1} \ztt_j c_{j+1}') \in \{ 0, 1 \}$.
Thus each row in the expression above has absolute value at most $1$, and
$$
|\phi_g(h_1) + \phi_g(h_2) - \phi_g(h_1 h_2)| \leq 3.
$$ 
Hence $D(\phi_g)\le 3$ and $D(\bar{\phi}_g) \leq 2 D(\phi_g) \leq 6$ by Lemma~\ref{lemma: homogenization}.
This shows bullet (\ref{item:defect}).

For bullet (\ref{item:homogeneous}), 
observe that $C_g(g^n) = n$, as $g$ is not self-overlapping and is cyclically reduced of length at least $2$.
Suppose that $C_{g^{-1}}(g^n) \neq 0$. 
As $\bar{g}$ and $\bar{g}^{-1}$ have the same length, observe that $\bar{g}^{-1}$ must already be a subword of $\bar{g} \cdot \bar{g}$ in this case. We may write $\bar{g} \cdot \bar{g} = u_1 \bar{g}^{-1} u_2$ as a reduced expression. Since $\bar{g} \cdot \bar{g}$ is $|g|$-periodic, we conclude that $g = u_1 u_2$ and $g^{-1} = u_2 u_1$, thus $g$ and $g^{-1}$ are conjugate. 

To see bullet (\ref{item:cyclic conjugates}), assume that $w$ is any cyclically reduced word with $|w| \leq |g|$ representing $h\in G$.
If $|w| = |g|$, then by the same argument as above we see that if $C_g(w^n) \neq 0$ for some $n\ge1$, then $\bar{g}$ is a cyclic conjugate of $w$, and similarly if $C_{g^{-1}}(w^n) \neq 0$.
If $|w| < |g|$, suppose that $C_g(w^n) \neq 0$ for some integer $n \in \N$. Because $|w| < |g|$ we have that $n \geq 2$. Then we may write $g = w_e w^m w_s$ as a reduced word where $w_e$ is some suffix of $w$ and $w_s$ is some prefix of $w$ and where $m \geq 0$. Note that $g$ is not a proper power since $\bar{g}$ is cyclically reduced and not self-overlapping. So either $w_e$ or $w_s$ is nontrivial.
If $|w_s| + |w_e| \leq |w|$, then $m\ge1$ and we may write $g = w_e w_s w' w^{m'} w'' w_e w_s$ for an appropriate choice of subwords $w', w''$ and an integer $m'\ge0$. 
This contradicts the fact that $\bar{g}$ is not self-overlapping.
If $|w_s| + |w_e| > |w|$, then the prefix $w_s$ and the suffix $w_e$ of $w$ intersect in some non-trivial word $w_0$. Thus we may write $w_e = w_0 w_e'$ and $w_s = w_s' w_0$. 
Hence $g = w_0 w_e' w^m w_s' w_0$, which again contradicts the fact that $\bar{g}$ is not self-overlapping. 
This shows that $\bar{\phi}_g(h)=\bar{\phi}_g(w)=0$ for all such $w$ unless $w$ is a cyclic conjugate of $\bar{g}$ or $\bar{g}^{-1}$.
\end{proof}

\begin{lemma} \label{lemma: cyclic conjugate minimal}
Every element $g \in G$ that is not a proper power is conjugate to an element $g' \in G$ such that $\bar{g}'$ is cyclically reduced 
and not self-overlapping.
\end{lemma}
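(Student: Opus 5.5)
The plan is to realize $g'$ as a \emph{Lyndon word}, i.e.\ the lexicographically minimal cyclic conjugate of a cyclically reduced representative of $g$, and then to use the classical fact from combinatorics on words that Lyndon words are unbordered. Unbordered is exactly the statement that $\bar g'$ is not self-overlapping.

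First, I will conjugate $g$ to an element whose reduced word $w$ is cyclically reduced. This is standard in a free product: repeatedly cancel or merge the first and last letters. For a word of length at least $2$, cyclically reduced means the first and last letters lie in different free factors, or are $\xtt_j^{\pm1}$ with no cancellation. Next I will observe that any cyclic rotation of a cyclically reduced word is again reduced and cyclically reduced, and represents a conjugate element. If $|w|\le 1$ there is nothing to prove, since a self-overlapping word $vuv$ with $v$ nontrivial has length at least $2$. Then I will check that $w$ is primitive as a word. If $w=v^k$ as reduced words with $k\ge2$, then the conjugate of $g$ equals the $k$-th power of the element $v$, so $g$ would be a proper power, contrary to the hypothesis. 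The same argument shows that all $|w|$ cyclic rotations of $w$ are pairwise distinct.

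Now fix any total order on the alphabet $S$, extend it lexicographically to words, and let $w'$ be the strictly smallest rotation of $w$; set $g'$ to be the element it represents. Suppose $w'=vuv$ with $v$ nontrivial. The key claim is that a word that is strictly smaller than all its proper rotations is also strictly smaller than each of its proper nonempty suffixes. Granting this, the suffix $v$ satisfies $w'<v$. But $v$ is a proper prefix of $w'$, so $v<w'$ in lexicographic order, which is a contradiction.

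The main obstacle is proving the claim, and I will do it as follows. Write $w'=ps$ with $p,s$ nonempty and suppose $s\le w'$.
\begin{itemize}
\item If $s$ is not a prefix of $w'$, then $s$ and $w'$ first differ at some position $i\le|s|$ with $s$ smaller there. Hence the rotation $sp$ satisfies $sp<ps=w'$, contradicting minimality.
\item If $s$ is a prefix of $w'$, write $w'=st$. Then $w'=ps=st$ with $|t|=|p|$. The standard argument compares the rotations $sp$ and $ts$, which is where primitivity (strictness) is used.
\end{itemize}
For the second case I would rather cite the well-known theorem, going back to Lyndon and Chen--Fox--Lyndon, that Lyndon words are unbordered than reproduce it. Failing that, I would use a short induction on the length of the border: a border $v$ of $w'$ gives $w'=v x=y v$, and comparing the rotations $xv$ and $w'$ forces $w'$ to be periodic with period $|x|$, iterating down to a contradiction with primitivity.
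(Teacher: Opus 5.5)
Your overall strategy matches the paper's: take the lexicographically minimal cyclically reduced conjugate and show it is unbordered. Your preliminary reductions are also fine. These are cyclic reduction, the fact that rotations remain cyclically reduced conjugates, and primitivity and pairwise distinctness of rotations because $g$ is not a proper power.

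The weak point is the step you flag as the main obstacle, which is not actually proved. Your Case~2 is where the suffix $s$ is also a prefix of $w'$, which means exactly that $s$ is a border of $w'$. There $s\prec w'$ holds automatically, since a proper prefix is smaller. So the key claim in that case \emph{is} the statement that $w'$ is unbordered, and the detour through suffixes is circular.

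The fallback induction does not fix this. For $w'=vx=yv$, periodicity with period $|x|$ follows from the border alone, not from comparing rotations. A period that does not divide $|w'|$ gives no contradiction with primitivity: the primitive word $aba$ has border $a$ and period $2$. The contradiction has to come from minimality, and your sketch never uses minimality effectively. As written, the whole content of the lemma rests on citing the classical theorem that Lyndon words are unbordered. That citation is legitimate, so the proof is complete with it, but the citation is unnecessary.

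The idea that closes the gap, and which the paper uses, is to compare $w'=vuv$ directly with two of its rotations, $vvu$ and $uvv$. Both are cyclically reduced conjugates of $g$. Since $|uv|=|vu|$:
\begin{itemize}
\item $vvu\prec vuv$ if and only if $vu\prec uv$;
\item $uvv\prec vuv$ if and only if $uv\prec vu$.
\end{itemize}
Primitivity gives $uv\neq vu$: otherwise $u$ and $v$ are powers of a common word and $w'$ is a proper power, and in particular $u$ empty would give $w'=vv$. Hence one of the two rotations is strictly smaller than $w'$, contradicting minimality. This two-line argument replaces both the suffix characterization of Lyndon words and the border induction.
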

\begin{proof}
We follow the strategy of \cite[Lemmas 3.1 and 3.2]{tao}. Pick an arbitrary total order $\prec$ on $S$ and extend this to a lexicographic order on reduced words and thus on $G$. Let $g'$ be a conjugate of $g$ such that $\bar{g}'$ is cyclically reduced 
and such that it is minimal with respect to $\prec$ among all conjugates of $g$ with this property.
We claim that $\bar{g}'$ is not self-overlapping.

If not, then we may write $\bar{g}' = u v u$ as a reduced expression, where $u$ is non-trivial. Then $|uv|=|vu|$ and $uv\neq vu$ as otherwise $uv$ and $g$ must be proper powers, contradicting our assumption. If $uv\prec vu$ then $u u v$ is a cyclic conjugate with $u u v\prec u v u$, which contradicts our choice of $\bar{g}'$. Similarly if $vu\prec uv$ then $v u u$ is a cyclic conjugate with $v u u\prec u v u$, which also contradicts our choice of $\bar{g}'$.
\end{proof}

Now we use such quasimorphisms to deduce Theorem~\ref{thmA: free product chain gap}, which we restate as follows:
\begin{thm}\label{thm:gap on chains of free products of cyclic groups}
	Let $G$ be a free product of cyclic groups and let $c = \sum c_i g_i$ be an integral chain that is nontrivial in $B_1^H(G)$. Then $\scl_G(c) \geq 1/12$. This gap is sharp on the class of free products of cyclic groups.
\end{thm}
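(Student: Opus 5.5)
Up to equivalence in $B_1^H(G)$, write $c=\sum_i c_i g_i$ with each $c_i\in\Z\setminus\{0\}$, and use Bavard's duality (Theorem~\ref{thm: Bavard}) with a single quasimorphism $\bar\phi_g$ from Lemma~\ref{lemma: Brooks}. Any $\bar\phi_g$ has $D(\bar\phi_g)\le 6$. So it is enough to find an admissible $g$ with $|\bar\phi_g(c)|\ge 1$, since then $\scl_G(c)\ge 1/12$.

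\emph{Normalizing the chain.} Using the relations $g^n-ng$ and $hgh^{-1}-g$, replace each $g_i$ by a root that is not a proper power. Torsion terms are equivalent to zero, since $g^{o}-o\,g$ with $g^o=1$ gives $g\sim 0$ over $\R$; discard them. Combine terms whose elements are conjugate to powers of one another. If some $g_i$ is conjugate to $g_i^{-1}$, then $g_i\sim -g_i$, so $g_i\sim 0$; discard these too. The result is $c=\sum_i a_i g_i$ with real coefficients $a_i\neq 0$ (rational, integral up to the roots taken) and the $g_i$ pairwise non-conjugate up to inversion. Each $g_i$ is infinite order, not a proper power, and not conjugate to its inverse. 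Since $c$ is nontrivial in $B_1^H(G)$, this list is nonempty.

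Before this, I will check that the coefficients stay integral. If $c=\sum c_j h_j$ with $h_j=g^{k_j}$ (up to conjugacy and inversion) and $g$ a root, then $c\sim(\sum \pm k_j c_j)\,g$, which has integer coefficient. So after normalization all $a_i\in\Z\setminus\{0\}$, i.e.\ $|a_i|\ge1$.

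\emph{Choosing the quasimorphism.} By Lemma~\ref{lemma: cyclic conjugate minimal}, conjugate each $g_i$ so that $\bar g_i$ is cyclically reduced and not self-overlapping. Pick $g=g_{i_0}$ with $|g_{i_0}|$ maximal.

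The case $|g|=1$ needs separate treatment. Then every $g_i$ is conjugate to a single letter, and since torsion has been removed, each $g_i$ is some $\xtt_j^{\pm1}$. In this case use the homomorphism counting exponent sum of $\xtt_j$, which has defect $0$. But $c\in B_1$ forces the exponent sums to vanish, so the normalized chain must in fact be trivial; the case therefore cannot occur.

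So assume $|g|\ge 2$, and Lemma~\ref{lemma: Brooks} applies. By item (2), $\bar\phi_g(g)=1$. By item (3), for every other $g_i$ we have $|g_i|\le|g|$, and $g_i$ is not a cyclic conjugate of $\bar g^{\pm1}$ because the $g_i$ are pairwise non-conjugate up to inversion. Hence $\bar\phi_g(g_i)=0$. Therefore $\bar\phi_g(c)=a_{i_0}$, and
\[
\scl_G(c)\ge \frac{|a_{i_0}|}{2\cdot 6}\ge \frac1{12}.
\]

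\emph{Sharpness.} For sharpness I will cite the known example $\scl([\ytt_1,\ytt_2])$ in $\Z/2\star\Z/3$, or the gap for elements in \cite[Theorem~6.9]{CFL16}, which attains $1/12$.

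The main obstacle is the bookkeeping in the normalization step. I need to make sure that passing to roots and merging conjugate and inverse terms keeps integer coefficients. I also need that "nontrivial in $B_1^H(G)$" really leaves a nonempty list of infinite-order, non-self-inverse-conjugate roots with $|a_i|\ge1$. The quasimorphism estimate itself is immediate from the two lemmas.
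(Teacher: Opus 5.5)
Your lower bound is correct and is essentially the paper's proof. You normalize the chain so the $g_i$ are roots, pairwise non-conjugate up to inversion and not conjugate to their inverses. You dispose of the case where the longest term has length $1$ via homology. Then you apply Bavard's duality to the single quasimorphism $\bar\phi_g$ built from the longest $g_{i_0}$, put in non-self-overlapping form by Lemma~\ref{lemma: cyclic conjugate minimal}, which gives $\scl_G(c)\ge |a_{i_0}|/12\ge 1/12$. Your check that merging terms conjugate to powers of a common root keeps the coefficients integral is a detail the paper leaves implicit, and it is fine.

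The sharpness part fails as written. In $\Z/2\star\Z/3$ one of $\ytt_1,\ytt_2$ has order $2$. If $\ytt_1=\ytt_1^{-1}$, then $\ytt_1^{-1}[\ytt_1,\ytt_2]\ytt_1=[\ytt_2,\ytt_1^{-1}]=[\ytt_2,\ytt_1]=[\ytt_1,\ytt_2]^{-1}$, and the case $\ytt_2=\ytt_2^{-1}$ is symmetric. So the commutator is conjugate to its inverse and $\scl([\ytt_1,\ytt_2])=0$; your own normalization step would discard it as equivalent to the zero chain. The correct witness, as in the paper, is the product $\ytt_1\ytt_2$ of the order-$2$ and order-$3$ generators. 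By the product formula \cite[Theorem 2.93]{Cal:sclbook}, $\scl(\ytt_1\ytt_2)=\frac12\bigl(1-\frac12-\frac13\bigr)=\frac1{12}$. This is an integral chain, null-homologous since $H_1(\Z/2\star\Z/3;\R)=0$, and nontrivial in $B_1^H$ since its scl is positive.

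A bare appeal to \cite[Theorem~6.9]{CFL16} does not replace naming an element that attains the bound. With the example corrected, the proof of the full statement goes through.
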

To see that this gap is sharp, let $G = (\Z/2) \star (\Z/3)$ be the free product of the cyclic group of order $2$ with generator $\att$ and the cyclic group of order $3$ with generator $\btt$, then $\scl_G(\att \btt) = 1/12$ by the product formula \cite[Theorem 2.93]{Cal:sclbook}.
\begin{proof}
Let $c=\sum_i c_i g_i$ be an integral chain where all $c_i$ are non-zero integers.
Up to replacing $c$ by an equivalent integral chain using the defining relation of $B_1^H(G)$ (see Section \ref{subsec: scl and rel scl}), we may assume that every $\bar{g_i}$ is cyclically reduced, not proper powers, and none of the $g_i$'s are conjugate to each other or their inverses.

Without loss of generality, assume that $|g_1|$ is maximal among all $|g_i|$. We also assume $|g_1|\ge2$, since otherwise being null-homologous implies that $c$ must be equivalent to the zero chain.
By Lemma \ref{lemma: cyclic conjugate minimal} there is a cyclic conjugate $g'$ of $g_1$ such that $g'$ is cyclically reduced and not self-overlapping.

Set $\phi = \bar{\phi}_{g'}$. Since $g'$ is conjugate to $g_1$ and $\phi$ is homogeneous, we have $\phi(g_1) = 1$. For every $i>1$ we have $|g_i|\le|g_1|$ and $\phi(g_i) = 0$ by Lemma \ref{lemma: Brooks}. By Bavard's duality Theorem~\ref{thm: Bavard} and the fact that $D(\phi)\le 6$, we deduce that
$$
\scl_G(c) \geq \frac{|\phi(c)|}{2 D(\phi)}= 
\frac{|\sum_i c_i \phi(g_i)|}{2 D(\phi)} = 
\frac{|c_1|}{2D(\phi)} \geq \frac{1}{12}.
$$
\end{proof}

For the special case where $c$ is a single element $g\in G$ that is not conjugate into any free factor, it is not equivalent to the zero chain provided that $g$ is not conjugate to $g^{-1}$. So we have the following corollary, which also follows from \cite[Theorem 6.9]{CFL16} (see also \cite[Lemma 2.6]{CH:sclgapgog_new}).
\begin{cor}\label{cor: free product gap for elements}
	Let $G$ be a free product of cyclic groups. Suppose $g\in G$ is not conjugate into any free factor, then $\scl_G(g)\ge1/12$ or $\scl_G(g)=0$, and the latter case occurs exactly when $g$ is conjugate to $g^{-1}$.
\end{cor}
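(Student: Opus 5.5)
The plan is to derive the corollary from Theorem~\ref{thm:gap on chains of free products of cyclic groups} applied to the integral chain $c=g$, together with Lemma~\ref{lemma: scl basic prop}. There are three things to check: the easy direction when $g$ is conjugate to $g^{-1}$, the claim that otherwise $g$ is nontrivial in $B_1^H(G)$, and how proper powers are handled.

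The easy direction is immediate. If $g$ is conjugate to $g^{-1}$, then Lemma~\ref{lemma: scl basic prop}(4) with $n=1$ gives $\scl_G(g)=0$.

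Now suppose $g$ is not conjugate to $g^{-1}$. If $g$ has nonzero image in $H_1(G;\R)$, then $\scl_G(g)=+\infty\ge 1/12$, so we may assume $g\in B_1(G)$.

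First reduce to the case that $g$ is not a proper power. Write $g=h^k$ with $k\ge1$ maximal and $h$ not a proper power. This is possible because $g$ is not conjugate into a free factor, so the cyclically reduced length is at least $2$ and powers grow in length. Then $h$ is also not conjugate into a free factor, and $h$ is not conjugate to $h^{-1}$, since otherwise $g=h^k$ would be conjugate to $h^{-k}=g^{-1}$. By Lemma~\ref{lemma: scl basic prop}(1), $\scl_G(g)=k\,\scl_G(h)$, so it suffices to bound $\scl_G(h)$ (or equivalently $\scl_G$ of the chain $h$, which is null-homologous after scaling).

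Next apply the argument from the proof of Theorem~\ref{thm:gap on chains of free products of cyclic groups} directly to the single-term chain $h$.
\begin{itemize}
\item By Lemma~\ref{lemma: cyclic conjugate minimal}, choose a conjugate $h'$ of $h$ that is cyclically reduced and not self-overlapping.
\item Its length is at least $2$, because $h$ is not conjugate into a free factor.
\item Lemma~\ref{lemma: Brooks}(\ref{item:homogeneous}) gives $\bar\phi_{h'}(h)=1$, since $h$ is not conjugate to $h^{-1}$.
\item Lemma~\ref{lemma: Brooks}(\ref{item:defect}) gives $D(\bar\phi_{h'})\le 6$.
\end{itemize}
Bavard's duality (Theorem~\ref{thm: Bavard}) then yields $\scl_G(h)\ge 1/12$, and hence $\scl_G(g)\ge k/12\ge1/12$.

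This argument also shows that such a $g$ is nonzero in $B_1^H(G)$, since a homogeneous quasimorphism vanishes on $H(G)$ but is nonzero on $g$. That matches the remark preceding the corollary.

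The only delicate point is ensuring the hypotheses of Lemma~\ref{lemma: Brooks} hold: the chosen representative must have length at least $2$ and must not be a proper power. Both follow from $g$ not being conjugate into a free factor, because an element of cyclically reduced length at most $1$ lies in a conjugate of some $Z_i$ or $\Z/o_j$. Finally, the two cases are exclusive: $\scl_G(g)=0$ occurs exactly when $g$ is conjugate to $g^{-1}$.
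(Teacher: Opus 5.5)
Your proposal is correct and is essentially the paper's argument. The paper notes that such a $g$, when not conjugate to $g^{-1}$, is nonzero in $B_1^H(G)$, and then applies Theorem~\ref{thm:gap on chains of free products of cyclic groups} to the chain $c=g$. You instead unpack that theorem's proof for a single element: you pass to the root $h$, take a non-self-overlapping conjugate $h'$, and apply Bavard's duality to $\bar\phi_{h'}$, and the same quasimorphism gives the nontriviality in $B_1^H(G)$ that the paper leaves implicit.
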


\section{Spectral gap of orbifolds groups relative to the boundary}\label{sec: orbifold with boundary}

In this section we use the quasimorphisms constructed in the previous section to give bounds on the stable commutator length of elements in orbifold groups relative to the boundary. 
The goal is to prove Theorem~\ref{thmA: orbifold uniform gap with boundary}, which we restate as follows.

\begin{thm}\label{thm: orbifold rel gap}
	Let $B$ be an orbifold with nonempty boundary. 
	Then for any $g\in \pi_1(B)$, we have either $\scl_{(B,\partial B)}(g)\ge 1/24$ or $\scl_{(B,\partial B)}(g)=0$.
	Moreover, $\scl_{(B,\partial B)}(g)=0$ if and only if
	\begin{enumerate}
		\item $g$ has finite order,\label{item: vanish if torsion}
		\item $g$ is conjugate to $g^{-1}$, or\label{item: vanish if conjugate to inv}
		\item $g$ is represented by a boundary loop.\label{item: vanish on the boundary}
	\end{enumerate}
\end{thm}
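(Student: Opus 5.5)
The plan is to realize $\pi_1(B)$ as a free product of cyclic groups in which the peripheral elements are explicit words. Then we control the boundary chains $c_\lambda$ appearing in the relative scl using a Brooks-type quasimorphism from Lemma~\ref{lemma: Brooks} that vanishes on all peripheral elements.

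First, the ``if'' direction follows from Lemma~\ref{lemma: relative scl basic prop}. Torsion elements and elements conjugate to their inverses have vanishing $\scl_B$, and boundary loops are peripheral.

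For the converse and the gap, fix the standard presentation. Since $\partial B\neq\emptyset$, $\pi_1(B)=G$ is a free product of cyclic groups. In the orientable case we take free generators $\att_1,\btt_1,\dots,\att_k,\btt_k$, boundary generators $\xtt_1,\dots,\xtt_{b-1}$ and cone generators $\ytt_1,\dots,\ytt_n$. The last boundary component is then represented by the word $\prod[\att_i,\btt_i]\prod \xtt_j\prod\ytt_l$. In the nonorientable case we use crosscap generators with a product of squares instead. The peripheral subgroups are generated by the $\xtt_j$ (cyclically reduced of length $1$) and by the long word $p=\prod[\att_i,\btt_i]\prod\xtt_j\prod\ytt_l$, which is cyclically reduced of length $L$.

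Next take $g$ of infinite order, not conjugate to $g^{-1}$, and not peripheral. We may assume $g$ is not a proper power, since $\scl$ scales by Lemma~\ref{lemma: relative scl basic prop}(1). By Lemma~\ref{lemma: cyclic conjugate minimal}, conjugate $g$ so that $\bar g$ is cyclically reduced and not self-overlapping, and set $\phi=\bar\phi_g$. Then $\phi(g)=1$ by Lemma~\ref{lemma: Brooks}(2), and $D(\phi)\le 6$. If $\phi$ vanishes on every peripheral element, then for every choice of chains $c_\lambda$ Bavard's duality gives
$$\scl_G(g+\textstyle\sum c_\lambda)\ge \frac{1}{12}.$$
That is better than $1/24$, but it requires showing that $\bar g$ and $\bar g^{-1}$ never occur as subwords of powers of $\xtt_j$ or of $p$. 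For the $\xtt_j$ this is automatic unless $\bar g$ is a power of $\xtt_j$, which is excluded since $g$ is non-peripheral and $|g|\ge 2$; the cases $|g|=1$ are handled directly. The difficulty is that $\bar g$ might be a subword of $p^{\pm m}$ without being a cyclic conjugate of $p$. For example, $g=\att_1\btt_1$ is a subword of $p$.

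\textbf{Main obstacle.} The main obstacle is controlling $\phi(p)$. My plan is to exploit the freedom of presentation rather than fix one. The choice of generators and of which boundary component is ``the long one'' is flexible, and we can also change the marking by automorphisms of $G$ that preserve the peripheral structure, since these preserve relative scl by Lemma~\ref{lemma: scl basic prop}(3). Alternatively, we accept a nonzero value $\phi(p)=\epsilon\in\{-1,0,1\}$ and instead use a combination $\psi=\phi_g-\epsilon\,\phi_{q}$, where $q$ is a cyclic conjugate of $p$ chosen via Lemma~\ref{lemma: cyclic conjugate minimal}. Then $\psi(p)=0$ while $\psi(g)=1$ by Lemma~\ref{lemma: Brooks}(3), provided $|g|\le|p|$ or $g$ is not conjugate to $p^{\pm1}$. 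The defect satisfies $D(\psi)\le 12$, which gives exactly $1/(2\cdot 12)=1/24$. This matches the constant in the statement, so I expect this to be the intended route.

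It remains to check that $\phi_g$ takes values in $\{-1,0,1\}$ on $p$ when $g$ is long relative to the letters of $p$, or else to swap the roles of $g$ and $q$ according to which is longer. This case analysis, ordered by comparing $|g|$ with $L$ and applying Lemma~\ref{lemma: Brooks}(3) to the shorter one, is where the real work lies.
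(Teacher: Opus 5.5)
Your eventual route is the paper's. After reducing to $g$ not a proper power with $\bar g$ cyclically reduced and not self-overlapping, either $\bar\phi_g$ kills every peripheral element (giving $1/12$), or you subtract $\bar\phi_q$ for a non-self-overlapping cyclic conjugate $q$ of the long boundary word $p$. That gives a quasimorphism of defect at most $12$ that vanishes on all peripheral subgroups and equals $1$ on $g$. Asking directly for such a quasimorphism is a bit cleaner than the paper's homology bookkeeping, and it treats both orientability cases uniformly.

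\textbf{Gap.} The argument is incomplete exactly where the constant comes from. You need $\bar\phi_g(p)=\epsilon\in\{-1,0,1\}$. In general $\psi=\bar\phi_g-\bar\phi_g(p)\,\bar\phi_q$ only satisfies $D(\psi)\le 6(1+|\bar\phi_g(p)|)$, which is not uniform. You leave this as ``remains to check'', and the mechanism you suggest (ordering by $|g|$ versus $L$ and swapping roles) cannot supply it. Comparing lengths only shows that $\epsilon\neq 0$ forces $|g|<|p|$. That fact is needed, to get $\bar\phi_q(g)=0$ from Lemma~\ref{lemma: Brooks}(3); your ``$|g|\le|p|$ or $g$ not conjugate to $p^{\pm1}$'' should read ``and''. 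But length comparison says nothing about how many times $\bar g$ or $\bar g^{-1}$ occurs in the cyclic word $p$. Changing the marking is beside the point.

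\textbf{How to close it.} The missing input is a combinatorial property of this particular word, which the paper records as an observation.
\begin{enumerate}
\item Write the crosscap generators as $\att_i$, so in the nonorientable case $p=\att_1^2\cdots\att_k^2\prod\xtt_j\prod\ytt_l$.
\item In the cyclic word $p$, each two-letter subword made of distinct letters occurs at most once. In the orientable case every letter occurs at most once. In the nonorientable case the only repetition is the doubled $\att_i\att_i$.
\item A reduced $\bar g$ with $|g|\ge 2$ that is not a proper power contains such a pair. Since also $|g|<|p|$, $\bar g$ occurs at most once, and likewise $\bar g^{-1}$.
\item They cannot both occur. If $uv$ are the first two letters of $\bar g$, then $v^{-1}u^{-1}$ would also occur in $p$. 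Checking the adjacent pairs of $p$ shows this happens only for $p=\ytt_1\ytt_2$ with $o_1=o_2=2$, where $\chi_o(B)=0$.
\end{enumerate}

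\textbf{Two smaller omissions.} First, $\bar\phi_q(p)=1$ requires $p$ not conjugate to $p^{-1}$, which again follows from $\chi_o(B)<0$.

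Second, the case $|g|=1$ is not automatic when $B$ is nonorientable of genus one. There $g=\att_1^\ell$ has finite relative scl, and every quasimorphism from Lemma~\ref{lemma: Brooks} vanishes on it. You need something like $\bar\phi_q-\tfrac12 h$, where $h$ is the exponent-sum homomorphism of $\att_1$ (defect $0$, $h(p)=2$). Alternatively, use the paper's homology argument with $\bar\phi_b$. Either gives the bound $|\ell|/24$.
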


First consider the orientable case. Let $B$ be a $2$-dimensional orientable orbifold of genus $k$ with $m+1$ boundary components and $n$ cone points of orders $o_1, \ldots, o_n \in \N$, where $k,m\ge 0$. 
Then the orbifold fundamental group $G=\pi_1(B)$ is
$Z_1 \star\cdots \star Z_{2k}\star Z^b_1\star \cdots \star Z^b_{m} \star (\Z/o_1) \star \cdots \star (\Z/o_n)$ where all $Z_\bullet^\bullet$'s are infinite cyclic groups.
Let $\xtt_1, \ldots, \xtt_{2k}$ be the natural generating set of  $Z_1 \star \cdots \star Z_{2k}$, let $\btt_1, \ldots, \btt_{m}$ be the natural generating set of $Z^b_1\star \cdots \star Z^b_{m}$, and let $\ytt_1, \ldots, \ytt_n$ be the natural generating set of $(\Z/o_1) \star \cdots \star (\Z/o_n)$.
With an appropriate choice, the $m+1$ boundary components with the induced orientation are represented by
$$ -\btt_1, -\btt_2, \cdots, -\btt_m, [\xtt_1, \xtt_2] \cdots [\xtt_{2k-1}, \xtt_{2k}] \btt_1 \cdots \btt_m \ytt_1 \cdots \ytt_n.$$
Then the boundary chain $\partial B$ is their sum
$$
\partial B \defeq [\xtt_1, \xtt_2] \cdots [\xtt_{2k-1}, \xtt_{2k}] \btt_1 \cdots \btt_m \ytt_1 \cdots \ytt_n - (\btt_1 + \cdots + \btt_m).
$$

\begin{proof}[Proof of Theorem \ref{thm: orbifold rel gap} for orientable orbifolds]
	It is obvious from Lemma~\ref{lemma: relative scl basic prop} that $\scl_{(B,\partial B)}(g)=0$ in cases (\ref{item: vanish if torsion}), (\ref{item: vanish if conjugate to inv}) and (\ref{item: vanish on the boundary}). 
	Assuming none of these cases occur, we now show $\scl_{(B,\partial B)}(g)\ge 1/24$. 
	
	Identify $G=\pi_1(B)$ with the free product of cyclic groups above. Note that the $m+1$ boundary elements (as homology classes) span an $m$-dimensional subspace $V_{\partial B}$ in $H_1(\pi_1(B);\R)$, and their only linear combinations that are null-homologous must be of the form $t\cdot \partial B$ for some $t\in\R$. 

	We assume that $[g]$ lies in $V_{\partial B}$ since otherwise $\scl_{(B,\partial B)}(g)=\infty$. Then there is a unique chain $c=\sum c_i \btt_i$ for some $c_i\in\R$ such that $[g]=[c]$, and $\scl_{(B,\partial B)}(g)=\inf_{t\in\R}\scl_B(g-c+t\partial B)$. Note that our assumptions imply that $g$ is not conjugate into any free factor.

	Write $g = h^q$ with $q\in\Z_+$ so that $h$ is not a proper power. Let $h'$ be the conjugate of $h$ as in Lemma \ref{lemma: cyclic conjugate minimal}. Note that $h'$ and $(h')^{-1}$ are not conjugate since $g$ and $g^{-1}$ are not, and that $h'$ has length at least $2$ since $g$ is not conjugate into any free factor. Thus $\bar{\phi}_{h'}(g)= q \geq 1$ and $\bar{\phi}_{h'}(\btt_i)=0$ for all $i$ by Lemma \ref{lemma: Brooks}, where $\bar{\phi}_{h'}$ is the Brooks quasimorphism constructed in the previous section. In particular, we have $\bar{\phi}_{h'}(c)=0$ and $\bar{\phi}_{h'}(\partial B)=\bar{\phi}_{h'}(b)$, where $b$ is the only long word in the chain $\partial B$, i.e.
	$$
	b \defeq [\xtt_1, \xtt_2] \cdots [\xtt_{2k-1}, \xtt_{2k}] \btt_1 \cdots \btt_m \ytt_1 \cdots \ytt_n.
	$$
		
	If $\bar{\phi}_{h'}(b) = 0$ then by Bavard's Duality and the estimate $D(\bar{\phi}_{h'})\le 6$ from Lemma \ref{lemma: Brooks} we have
	$$
	\scl_B(g -c + t\cdot \partial B) \geq \frac{|\bar{\phi}_{h'}(g -c + t\cdot \partial B)|}{2 D(\bar{\phi}_{h'})} \geq \frac{|\bar{\phi}_{h'}(g)|}{12} \geq \frac{1}{12}.
	$$

	We are left with the case where $\bar{\phi}_{h'}(b) \neq 0$. By Lemma \ref{lemma: Brooks} and the assumption that $g$ is not boundary parallel, this implies $|b|>|h'|\ge 2$ and moreover either $h'$ or its inverse is cyclically a proper subword of $b$. Suppose without loss of generality that $h'$ (instead of $h'^{-1}$) appears in $b$ up to cyclic permutation.

	From the expression of $b$ we observe that it can only contain one copy of $h'$, and there is no occurrence of $h'^{-1}$.
	This implies $\bar{\phi}_{h'}(\partial B)=\bar{\phi}_{h'}(b)=1$. 
	Observe also that $b$ is not self-overlapping and has length at least $2$, 
	thus $\bar{\phi}_{b}$ is a quasimorphism of defect at most $6$, $\bar{\phi}_{b}(b)=1$ (as $b$ is not conjugate to $b^{-1}$ since $\chi_o(B)<0$), and $\bar{\phi}_{b}(h')=0=\bar{\phi}_{b}(\btt_i)$ by Lemma \ref{lemma: Brooks}. Hence $\bar{\phi}_{b}(g)=q\bar{\phi}_{b}(h')=0$ and $\bar{\phi}_{b}(c)=0$.
	
	Set $\bar{\phi} \defeq \bar{\phi}_{h'} - \bar{\phi}_{b}$. We have the crude estimate $D(\bar{\phi})\leq  D(\bar{\phi}_{h'})+ D(\bar{\phi}_{b}) \leq 12$.
	Also note that $\bar{\phi}(\partial B)=\bar{\phi}_{h'}(b)-\bar{\phi}_{b}(b) = 1-1= 0$ and
	$\bar{\phi}(g) = \bar{\phi}_{h'}(g)=q\bar{\phi}_{h'}(h')\ge1$.
	Thus
	$$
	\scl_B(g -c + t \partial B) \geq \frac{\bar{\phi}(g -c + t \partial B)}{2 D(\bar{\phi})}=\frac{\bar{\phi}(g)}{2 D(\bar{\phi})} \geq \frac{1}{24},
	$$
	which finishes the proof of the orientable case.
\end{proof}

The nonorientable case is similar. In this case, if $B$ has (nonorientable) genus $k$ and $m+1$ boundary components with cone points of order $o_1,\ldots, o_n$, where $k\ge1,m\ge0$, then $\pi_1(B)$ is
$Z_1 \star\cdots \star Z_{k}\star Z^b_1\star \cdots \star Z^b_{m} \star (\Z/o_1) \star \cdots \star (\Z/o_n)$ where all $Z_\bullet^\bullet$'s are infinite cyclic groups.
Let $\xtt_1, \ldots, \xtt_{k}$ be the natural generating set of  $Z_1 \star \cdots \star Z_{k}$, let $\btt_1, \ldots, \btt_{m}$ be the natural generating set of $Z^b_1 \cdots \star Z^b_{m}$, and let $\ytt_1, \ldots, \ytt_n$ be the natural generating set of $(\Z/o_1) \star \cdots \star (\Z/o_n)$.
With an appropriate choice, the $m+1$ boundary components are represented by
$$ -\btt_1, -\btt_2, \cdots, -\btt_m, \xtt_1^2\cdots \xtt_k^2 \btt_1 \cdots \btt_m \ytt_1 \cdots \ytt_n.$$

\begin{proof}[Proof of Theorem \ref{thm: orbifold rel gap} for nonorientable orbifolds]
	The proof is similar to the orientable case. It again suffices to show that $\scl_{(B,\partial B)}(g)\ge 1/24$ assuming that we are not in cases (\ref{item: vanish if torsion}), (\ref{item: vanish if conjugate to inv}) or (\ref{item: vanish on the boundary}).
	
	Identify $G=\pi_1(B)$ with the free product of cyclic groups above. Now the $m+1$ boundary elements (as homology classes) are linearly independent in $H_1(\pi_1(B);\R)$ and span an $(m+1)$-dimensional subspace $V_{\partial B}$. We assume that $[g]$ lies in $V_{\partial B}$. Then there is a unique chain $c=c_0 b+\sum c_i \btt_i$ for some $c_i\in\frac{1}{2}\Z$ such that $[g]=[c]$, and $\scl_{(B,\partial B)}(g)=\scl_B(g-c)$, where
	$$
	b \defeq \xtt_1^2 \cdots \xtt_k^2 \btt_1 \cdots \btt_m \ytt_1 \cdots \ytt_n.
	$$
	As $[g]\in V_{\partial B}$, the assumptions imply that $g$ is not conjugate into any free factor unless $k=1$ and $g$ is conjugate to $\xtt_1^\ell$ for some $\ell\neq0\in \Z$. In this exceptional case, $c=\frac{\ell}{2} (b-\sum \btt_i)$, and $|\bar{\phi}_b(g-c)|=|\ell|/2$, where $\bar{\phi}_b$ is the Brooks quasimorphism of defect at most $6$ by Lemma \ref{lemma: Brooks} since $b$ is not self-overlapping and has length at least $2$. Hence Bavard's duality implies that $\scl_B(g-c)\ge |\ell|/24\ge 1/24$ in this case.
	
	In the sequel, assume that $g$ is not conjugate into any free factor. Write $g = h^q$ with $q\in\Z_+$ so that $h$ is not a proper power. Let $h'$ be the conjugate of $h$ as in Lemma \ref{lemma: cyclic conjugate minimal}. Note that $h'$ and $h'^{-1}$ are not conjugate since $g$ and $g^{-1}$ are not, and that $h'$ has length at least $2$ since $g$ is not conjugate into any free factor. Thus $\bar{\phi}_{h'}(g)=q \geq 1$ and $\bar{\phi}_{h'}(\btt_i)=0$ for all $i$ by Lemma \ref{lemma: Brooks}, and $\bar{\phi}_{h'}$ has defect at most $6$.
	
	If $\bar{\phi}_{h'}(b) = 0$ then we compute by Bavard's duality to obtain
	$$
	\scl_{(B,\partial B)}(g)=\scl_B(g -c) \geq \frac{|\bar{\phi}_{h'}(g -c)|}{2 D(\bar{\phi}_{h'})} \geq \frac{|\bar{\phi}_{h'}(g)|}{12} \geq \frac{1}{12}.
	$$
	
	We are left with the case where $\bar{\phi}_{h'}(b) \neq 0$. By Lemma \ref{lemma: Brooks} and the assumption that $g$ is not boundary parallel, this implies $|b|>|h'|\ge 2$ and moreover either $h'$ or its inverse is cyclically a proper subword of $b$. Suppose without loss of generality that $h'$ (instead of $h'^{-1}$) appears in $b$ up to cyclic permutation.
	
	From the expression of $b$ we see that it only contains one copy of $h'$ since $|h'|>1$, and it does not contain $h'^{-1}$. 
	This is because the exception of $h'=\btt_i \btt_{i+1}$ with $o_i=o_{i+1}=2$ cannot occur as $h'$ is not conjugate to $(h')^{-1}$.
	Thus we have $\bar{\phi}_{h'}(b)=1$. Hence $\bar{\phi}_{h'}(g-c)=q-c_0\in\frac{1}{2}\Z$ and $\scl_B(g-c)\ge |q-c_0|/12$ by Bavard's duality. It implies $\scl_B(g-c)\ge1/24$ unless $c_0=q$.
	For this exceptional case, observe that $\bar{\phi}_{b}$ has defect at most $6$ and $\bar{\phi}_{b}(h')=0=\bar{\phi}_{b}(\btt_i)$ by Lemma \ref{lemma: Brooks}, and hence $\bar{\phi}_{b}(g)=0, \bar{\phi}_{b}(c)=c_0$.
	Set $\bar{\phi} \defeq \bar{\phi}_{h'} - \bar{\phi}_{b}$, which has defect $D(\bar{\phi})\leq  D(\bar{\phi}_{h'})+ D(\bar{\phi}_{b}) \leq 12$. Then $\bar{\phi}(c) = c_0-c_0=0$ and
	$\bar{\phi}(g) = q\bar{\phi}_{h'}(h')-q\bar{\phi}_{b}(h')=q \geq 1$.
	Thus
	$$
	\scl_B(g -c) \geq \frac{\bar{\phi}(g -c)}{2 D(\bar{\phi})} \geq \frac{1}{24},
	$$
	which completes the proof.
\end{proof}

\section{Uniform relative spectral gap of $2$-orbifolds}\label{sec: closed case}
The goal of this section is to prove Theorem~\ref{thmA: orbifold uniform gap}.

Given Theorem \ref{thm: orbifold rel gap}, it suffices to focus on the case of closed orbifolds.
A different proof method is required when the orbifold is the $2$-sphere with three cone points. We first handle the generic case in Section \ref{subsec: large orbifolds} and then consider the exceptional case of a sphere with three cone points in Section \ref{subsec: small orbifolds}.

\subsection{The generic case}\label{subsec: large orbifolds}
\begin{thm}\label{thm: closed orbifold generic case}
	Let $B$ be a closed $2$-dimensional orbifold $B$ with $\chi_o(B)<0$. Suppose $B$ is not the $2$-sphere with three cone points.
	Then for all $g\in\pi_1(B)$ we have either $\scl_{B}(g)\ge 1/36$ or $\scl_{B}(g)=0$.
\end{thm}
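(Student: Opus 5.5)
We will cut $B$ along an essential simple closed curve $\gamma$ (two-sided, avoiding cone points) so that every complementary piece is an orbifold with nonempty boundary and negative orbifold Euler characteristic. Such a curve exists precisely because $B$ is not a sphere with three cone points.
\begin{itemize}
\item If the underlying surface has genus $\ge1$, take a nonseparating curve.
\item If $B$ is a sphere with at least four cone points, take a curve separating the cone points into two groups of at least two each. Each side is then a disk with $\ge2$ cone points, which has $\chi_o<0$ except for the disk with two cone points of order $2$. When all cone orders are $2$, we need at least five cone points to have $\chi_o(B)<0$, so we can group them $2+3$; the side with two order-$2$ points may still be Euclidean. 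So we choose the split carefully, or alternatively cut along several curves so that every vertex piece is hyperbolic.
\item Nonorientable cases are handled similarly, using one-sided/two-sided curve choices.
\end{itemize}
This gives a splitting of $\pi_1(B)$ as an amalgam or HNN extension over $\langle\gamma\rangle\cong\Z$, with vertex groups $\pi_1(B_v)$. The Bass--Serre tree action is acylindrical: $\gamma$ is a simple closed geodesic, and in a hyperbolic orbifold group the stabilizer of a long enough path is trivial. The acylindricity constant can be bounded uniformly; I expect stabilizers of paths of length $2$ to be already trivial, or at worst finite.

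We then split into cases according to how $g$ acts on the tree $T$.

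\emph{Case 1: $g$ is elliptic,} that is, conjugate into some $\pi_1(B_v)$. We use the monotonicity of relative scl under the inclusion of vertex groups relative to edge groups: for a graph of groups, $\scl_G(g)\ge \scl_{(G_v,\{G_e\})}(g)$. This follows from cutting an admissible surface along preimages of edge spaces and discarding disk pieces, as in \cite{CH:sclgapgog_new}. Theorem~\ref{thm: orbifold rel gap} then gives $\scl_B(g)\ge 1/24$ unless the relative scl vanishes. The vanishing cases are:
\begin{itemize}
\item $g$ is torsion, or $g$ is conjugate to $g^{-1}$ in $\pi_1(B_v)$; then $\scl_B(g)=0$.
\item $g$ is peripheral in $B_v$, hence a power of $\gamma$. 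Then $g$ is also hyperbolic-type for another splitting, which we handle by re-cutting along a different curve crossing $\gamma$, so that $\gamma$ becomes hyperbolic in the new tree and falls under Case 2.
\end{itemize}

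\emph{Case 2: $g$ acts hyperbolically.} We apply the spectral gap for acylindrical graphs of groups from \cite{CH:sclgapgog_new}. It gives $\scl_G(g)\ge 1/2-1/n$ whenever edge-path stabilizers of length $n$ are trivial, and in any case a bound like $1/2-1/n$ when $g$ is not conjugate to $g^{-1}$; otherwise $\scl(g)=0$. Here $n$ is uniform, since it depends only on the splitting type, so the bound is at least a universal constant, and $1/36$ should be comfortably below it.

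The main obstacle is Case 1 for peripheral $g$, together with guaranteeing that every vertex piece has $\chi_o<0$ with a uniform acylindricity constant. The fallback for peripheral $g$ is to choose, for each $g$, a splitting curve disjoint from the geodesic of $g$ when $g$ is simple, or one crossing it otherwise. The loss of constants from combining $1/24$ with the acylindrical bound and the defect estimates is what I expect yields $1/36$.
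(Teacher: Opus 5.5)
Your Case~1 reduction (vertex-group monotonicity together with Theorem~\ref{thm: orbifold rel gap}) matches the paper. However, the two steps that actually produce the constant $1/36$ have genuine gaps.

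\textbf{Case 2 rests on a false statement.} The $1/2-1/n$ bound of \cite{CH:sclgapgog_new} needs a relative torsion-freeness ($n$-RTF) hypothesis on the edge groups in the vertex groups. Triviality of stabilizers of long edge paths does not suffice, and the claim as you state it is false. Take $\Z/2\star\Z/3$ split over the trivial group: stabilizers of edge paths of every length are trivial, and $\att\btt$ acts hyperbolically and is not conjugate to its inverse. Yet $\scl(\att\btt)=1/12<1/2-1/3$.

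Cone points put exactly this kind of torsion into your vertex groups. What acylindricity does give, despite torsion, is Theorem~\ref{thm: acyl action gap} of \cite{CFL16}: $\scl\ge 1/(12N)$ with $N=\lceil (K+3)/2\rceil$. This is $1/24$ for $K=1$ and $1/36$ for $K=2$. That is the source of $1/36$, not a combination of $1/24$ with defect estimates. It also means you must prove an explicit $K\le 2$, with trivial (not merely finite) fixators; you only say you ``expect'' this.

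\textbf{The splitting you ask for need not exist.} Consider $S^2(2,2,2,3)$ or $S^2(2,2,2,2,2)$, the spheres with cone points of those orders. Any family of disjoint essential curves has a dual tree with at least two leaves, and each leaf is a disk with cone points. Such a disk is hyperbolic only if it carries at least three cone points, or two cone points not both of order $2$. Counting then shows some leaf is not hyperbolic, for example $D(2,2)$ with group $D_\infty$. Cutting along more curves cannot help, and $\mathbb{RP}^2(2,3)$ likewise forces a non-hyperbolic piece.

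The missing idea is that you do not need every piece to be hyperbolic. For $A\star_C B$, malnormality of $C$ on \emph{one} side already gives $2$-acylindricity. The non-hyperbolic side ($D_\infty$ or $\Z$) contains only elements that are torsion, conjugate to their inverses, or have a power in $C$. The paper uses exactly this:
\begin{itemize}
\item an HNN extension over $\Z$ for orientable genus $\ge1$, with $K=1$;
\item $\langle\ytt_1,\ytt_2\rangle\star_\Z\langle\ytt_3,\ldots,\ytt_n\rangle$ in genus $0$, with $K=2$;
\item $\langle\xtt_1\rangle\star_\Z\langle\xtt_2,\ldots,\xtt_k,\ytt_1,\ldots,\ytt_n\rangle$ in the nonorientable case, with $K=2$.
\end{itemize}

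Your re-cutting fix for peripheral $g$ is also only a sketch, and choosing a curve disjoint from the geodesic of a simple $g$ gets it backwards. The paper handles an edge element $z$ directly. Either $z\neq 0$ in $H_1(\pi_1(B);\R)$, so $\scl_B(z)=\infty$. Or \cite[Theorem~4.8]{CH:sclgapgog_new} gives $\scl_B(z)$ as the minimum of scl in the two free-product vertex groups, which is $0$ or at least $1/12$ by Corollary~\ref{cor: free product gap for elements}.
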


Our proof relies on a result of Clay--Forester--Louwsma \cite{CFL16}, so we start by recalling some relevant definitions.
\begin{defn}\label{def: acylindrical}
	A group action on a tree (without inversion) simplicially is \emph{$K$-acylindrical} for some $K\in\Z_+$ if the fixed point set of each nontrivial element has diameter at most $K$.
\end{defn}
It is well known that any isometry of a tree is either elliptic (i.e. acting with fixed points) or hyperbolic (i.e. acting by translation along a unique axis).
In addition, the fixed point set of an elliptic isometry is a subtree.

With the definition above, which differs from the one in \cite{CFL16}, 
the statement below is equivalent to \cite[Theorem 6.11]{CFL16}.
\begin{thm}[Clay--Forester--Louwsma \cite{CFL16}]\label{thm: acyl action gap}
	Let $G$ be a group acting $K$-acylindrically and let $N$ be the least integer greater than or equal to $(K+3)/2$. 
	If $g\in G$ acts without fixed point (i.e. by hyperbolic isometry), then either $\scl_G(g)\ge 1/(12N)$ or $\scl_G(g)=0$.
	Moreover, the latter case occurs exactly when $g$ and $g^{-1}$ are conjugate.
\end{thm}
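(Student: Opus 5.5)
The plan is to reduce the statement to \cite[Theorem 6.11]{CFL16} by checking that the two notions of acylindricity agree, and to read the conclusion off directly. I would only fall back on sketching the underlying quasimorphism argument if the reduction needs checking. In \cite{CFL16}, an action is called $K$-acylindrical (in their normalization) roughly when every nontrivial element fixes no segment of length exceeding a threshold tied to $K$. Under Definition~\ref{def: acylindrical}, the fixed point set of a nontrivial elliptic element is a subtree of diameter at most $K$. So our condition says exactly that the pointwise stabilizer of every segment of length $K+1$ is trivial. The first step is to rewrite both definitions as a statement about trivial pointwise stabilizers of segments of a given length, and then determine the shift in the parameter.

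Once the shift is known, the second step is to check that the constant $N=\lceil (K+3)/2\rceil$ in the statement is what $1/(12N)$ in \cite[Theorem 6.11]{CFL16} becomes after the translation. I would also check the ``moreover'' clause on $g$ conjugate to $g^{-1}$ there. The ``if'' direction is Lemma~\ref{lemma: scl basic prop}(4). The ``only if'' direction comes from the lower bound, because \cite{CFL16} produce a homogeneous quasimorphism that is nonzero on $g$ whenever $g$ and $g^{-1}$ are not conjugate.

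If a self-contained argument were wanted, I would mimic the strategy of Section~\ref{sec: chain gap}, with the tree in place of reduced words. Let $g$ be hyperbolic with axis $A_g$, and choose a fundamental segment $\sigma$ of $A_g$, oriented by the translation, of length roughly $N$ times a suitable unit. Then define a Brooks-type counting quasimorphism that counts disjoint $G$-translates of $\sigma$ along geodesics $[x_0,hx_0]$, minus the translates of $\bar\sigma$. The role of the non-self-overlapping property is played by acylindricity: if two translates of a long segment overlapped in more than $K$ edges with a shift, some nontrivial element would fix a segment of length exceeding $K$. This is why the segment has length of order $(K+3)/2$. The tripod argument from the proof of Lemma~\ref{lemma: Brooks} then bounds the defect by a constant, and homogenizing gives defect at most $6$ with the value $1$ on $g$ when $g\not\sim g^{-1}$. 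After normalizing by the $N$ copies, Bavard's duality (Theorem~\ref{thm: Bavard}) gives the bound $1/(12N)$.

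I expect the main obstacle to be the exact bookkeeping of the constant. This includes the off-by-one between diameter and segment length in the two definitions of acylindricity, and the rounding that produces $\lceil (K+3)/2\rceil$ rather than something like $K+2$. I would settle this first by testing on the simplest case, $K=1$, such as a free product acting on its Bass--Serre tree. There the expected gap should come out to $1/24$, consistent with the $1/12$ of Corollary~\ref{cor: free product gap for elements} up to the factor coming from $N=2$.
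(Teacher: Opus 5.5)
Your proposal matches the paper. The statement is not proved there but quoted from \cite[Theorem 6.11]{CFL16}, and the only content is that Definition~\ref{def: acylindrical} (fixed sets of nontrivial elements have diameter at most $K$, equivalently pointwise stabilizers of segments of length $K+1$ are trivial) is a reparametrization of the notion in \cite{CFL16}, which is exactly your reduction. One caution on your calibration test: a free product acting on its Bass--Serre tree is $0$-acylindrical in this sense (not $1$-acylindrical), both $K=0$ and $K=1$ give $N=2$, and the resulting bound $1/24$ is not sharp there, so that example cannot detect the off-by-one; you have to read the shift directly off the definition in \cite{CFL16}.
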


For our applications below, we consider the Bass--Serre tree actions for HNN extensions and amalgamated free products \cite{Serre}.
In such cases, an element acts elliptically if and only if it is conjugate into a vertex group.
For the HNN extension $G$ associated to an isomorphism of two subgroups $C_1,C_2\le A$,
an element acting on the Bass--Serre tree with a fixed point set of diameter $\ge2$ must fix two adjacent edges and hence is conjugate into $C_i\cap hC_jh^{-1}$ for some $i,j\in\{1,2\}$ and $h\in A\setminus C_j$.
Thus the action is $1$-acylindrical if all such subgroups are trivial.
For an amalgamated free product $G=A\star _C B$ acting on the Bass--Serre tree, 
if the fixed point set of an element $g\in G$ has diameter $\ge3$, then it contains a path of length $3$.
Up to conjugation, we may assume the middle edge of such a path is the one fixed by $C$, in which case $g$ lies in $aCa^{-1}\cap C$ for some $a\in A\setminus C$ and also in $bCb^{-1}\cap C$ for some $b\in B\setminus C$. In particular, the action is $2$-acylindrical if $aCa^{-1}\cap C=1$ for all $a\in A\setminus C$ or $C\cap bCb^{-1}=1$ for all $b\in B\setminus C$.

Now we prove Theorem \ref{thm: closed orbifold generic case}.
\begin{proof}[Proof of Theorem \ref{thm: closed orbifold generic case}]
	
	First consider the orientable case. Suppose $B$ is orientable with genus $k$ and $n$ cone points of orders $o_1,\cdots, o_n$. Then $\chi_o(B)=2-2k-\sum (1-1/o_i)$ and 
	$$\pi_1(B)=\langle \xtt_1,\ldots,\xtt_{2k},\ytt_1,\dots \ytt_n \ |\ \ytt_i^{o_i}=1, [\xtt_1,\xtt_2]\ldots[\xtt_{2k-1},\xtt_{2k}]\ytt_1\ldots\ytt_n=1 \rangle.$$
	If $k\ge 1$, we rewrite the last relation as
	$\xtt_2^{-1}[\xtt_3,\xtt_4]\ldots[\xtt_{2k-1},\xtt_{2k}]\ytt_1\ldots\ytt_n=\xtt_1\xtt_2^{-1}\xtt_1^{-1}$, and view
	$\pi_1(B)$ as the HNN extension over $\Z$ of the free product $H$ generated by $\xtt_2,\ldots,\xtt_{2k},\ytt_1,\ldots, \ytt_n$,
	where the generator of $\Z$ is sent to $\xtt_2^{-1}$ and $\xtt_2^{-1}[\xtt_3,\xtt_4]\ldots[\xtt_{2k-1},\xtt_{2k}]\ytt_1\ldots\ytt_n$ respectively. 
	As $\chi_o(B)<0$ and $k\ge1$, we have either $k\ge2$ or $n\ge1$.
	By looking at the projection to
	either $\langle\xtt_3,\xtt_4\rangle$ (if $k\ge2$) or $\langle \ytt_1\rangle$ (if $n\ge1$), the cyclic subgroups of $H$ generated by 
	$\xtt_2^{-1}$ and $\xtt_2^{-1}[\xtt_3,\xtt_4]\ldots[\xtt_{2k-1},\xtt_{2k}]\ytt_1\ldots\ytt_n$ respectively have no conjugates intersecting non-trivially. Thus the $\pi_1(B)$ action on the Bass--Serre tree associated to this HNN extension is $1$-acylindrical, and we have a gap $1/24$ for hyperbolic elements by Theorem \ref{thm: acyl action gap}. 
	
	If $k=0$, then we have $n\ge 4$ since $\chi_o(B)<0$ and $B$ is not the $2$-sphere with three cone points. 
	Then $\pi_1(B)$ can be viewed as an amalgam over $\Z$ of the free products generated by $\ytt_1,\ytt_2$ and $\ytt_3,\ldots,\ytt_n$ respectively, where the generator of $\Z$ is sent to $(\ytt_1 \ytt_2)^{-1}$ and $\ytt_3\cdots\ytt_n$ respectively. Note that $h(\ytt_1 \ytt_2)^p h^{-1}=(\ytt_1 \ytt_2)^q$ with $p,q\neq0$ only when $p=q$ and $h$ is a power of $\ytt_1 \ytt_2$, unless $o_1=o_2=2$. In the exceptional case of $o_1=o_2=2$, either $n>4$ or $o_3,o_4$ are not both equal to $2$ since $\chi_o(B)<0$, either of which implies that $h(\ytt_3\cdots\ytt_n)^p h^{-1}=(\ytt_3\cdots\ytt_n)^q$ with $p,q\neq0$ only when $p=q$ and $h$ is a power of $\ytt_3\cdots\ytt_n$. It follows that the $\pi_1(B)$ action on the Bass--Serre tree is $2$-acylindrical, and we have a gap $1/36$ for hyperbolic elements by Theorem \ref{thm: acyl action gap}. 
	
	Now suppose $B$ is nonorientable with genus $k$ and $n$ cone points of orders $o_1,\cdots, o_n$, where $k\ge1$. Then $\chi_o(B)=2-k-\sum (1-1/o_i)$ and 
	$$\pi_1(B)=\langle \xtt_1,\ldots,\xtt_k,\ytt_1,\dots \ytt_n \ |\ \ytt_i^{o_i}=1, \xtt_2^2\ldots\xtt_k^2\ytt_1\ldots\ytt_n=\xtt_1^{-2} \rangle.$$
	The last relation naturally splits $\pi_1(B)$ as an amalgam over $\Z$ of $\langle\xtt_1\rangle$ and the free product of $\xtt_2,\ldots, \xtt_k, \ytt_1,\ldots,\ytt_n$. 
	Since $\chi_o(B)<0$, we have $k\ge2$ or $n\ge3$, so $h(\xtt_2^2\ldots\xtt_k^2\ytt_1\ldots\ytt_n)^p h^{-1}= (\xtt_2^2\ldots\xtt_k^2\ytt_1\ldots\ytt_n)^q$ for $p,q\neq0$ only when $p=q$ and $h$ is a power of $\xtt_2^2\ldots\xtt_k^2\ytt_1\ldots\ytt_n$. Thus the $\pi_1(B)$ action on the Bass--Serre tree is $2$-acylindrical, and we have a gap $1/36$ for hyperbolic elements by Theorem \ref{thm: acyl action gap}.
	
	It remains to show the gap for elliptic elements. In any of the cases above, if $g$ is an elliptic element, we have $\scl_B(g)\ge \scl_{(B',\partial B')}(g)$ by \cite[Lemma~3.2]{CH:sclgapgog_new}, where $B'$ is the sub-orbifold corresponding to the vertex group supporting $g$ in the splittings above. Theorem \ref{thm: orbifold rel gap} implies that $\scl_{(B',\partial B')}(g)\ge1/24$ unless one of the three exceptional cases occurs. For the exceptional cases (\ref{item: vanish if torsion}) and (\ref{item: vanish if conjugate to inv}), we have $\scl_B(g)=0$. The remaining case (\ref{item: vanish on the boundary}) implies that $g$ lies in the edge group $\Z=\langle z\rangle$. 
	\begin{enumerate}
		\item In the HNN extension for $B$ orientable with genus $k\ge1$ above, nontrivial elements in the edge group have nontrivial image in the first homology and thus have $\scl_B(g)=\infty$.
		\item In the amalgam for $B$ orientable with genus $k=0$ above, let $G_1,G_2$ be the vertex groups. By \cite[Theorem~4.8]{CH:sclgapgog_new}, we have $\scl_B(z)=\min(\scl_{G_1}(\ytt_1 \ytt_2),\scl_{G_2}(\ytt_3\cdots\ytt_n))$, which is either $0$ or at least $1/12$ by Corollary~\ref{cor: free product gap for elements} since $G_1$ and $G_2$ are free products of cyclic groups.
		\item In the amalgam for $B$ nonorientable above, nontrivial elements in the edge group are nontrivial in the homology and $\scl_B(g)=\infty$, unless $k=1$. If $k=1$, then \cite[Theorem~4.8]{CH:sclgapgog_new} implies $\scl_B(z)=\scl_{H}(\ytt_1\ldots\ytt_n)\ge 1/12$ by Corollary~\ref{cor: free product gap for elements}, where $H$ is the free product of cyclic groups generated by $\ytt_i$'s; it is easy to check that $\ytt_1\ldots\ytt_n$ cannot be conjugate to its inverse as $n\ge3$.
	\end{enumerate}
\end{proof}

\subsection{The exceptional case}\label{subsec: small orbifolds}
In the exceptional case where $B$ is the $2$-sphere with three cone points of order $p,q,r$, we have $1/p+1/q+1/r<1$ when $\chi_o(B)<0$. In this case the orbifold fundamental group is called the (hyperbolic) von Dyck group, which is the index two subgroup of the (hyperbolic) triangle group with parameters $p,q,r$. The tiling of the hyperbolic plane by hyperbolic triangles of angles $\pi/p,\pi/q,\pi/r$ gives a faithful representation of the triangle group and thus of $G$.
\begin{lemma}\label{lemma: length estimate}
	There is a uniform constant $\delta\defeq 2\cosh^{-1}(\cos(2\pi/7)+1/2)$ such that any hyperbolic element in a von Dyck group $G$ has translation length at least $\delta$.
\end{lemma}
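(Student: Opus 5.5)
The approach is to work with traces in $\PSLtwo(\R)$. A hyperbolic element $g$ has translation length $\ell(g)$ with $2\cosh(\ell(g)/2)=|\mathrm{tr}(g)|$. So it suffices to show $|\mathrm{tr}(g)|\ge 2\cos(2\pi/7)+1$ for every hyperbolic $g$ in every hyperbolic von Dyck group $G=G(p,q,r)$. The constant is exactly the value of $\cosh(\ell/2)$ for the shortest hyperbolic elements in the $(2,3,7)$ group, e.g.\ the commutator-type word in the two elliptic generators. The plan has two parts: reduce to a finite list of short words, then compare over all triples $(p,q,r)$ by monotonicity.

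\textbf{Reduction to short words.} Realize $G$ as the orientation-preserving index two subgroup of the reflection group of the tiling by triangles $T$ with angles $\pi/p,\pi/q,\pi/r$, so that $B=\Hbb^2/G$ is the double of $T$. Let $g$ be hyperbolic with $\ell(g)<\delta$. Its axis projects to a closed geodesic $\gamma$ in $B$ of length $<\delta$. Cutting $\gamma$ along the edges of the tiling writes $g$, up to conjugacy, as a word in the rotation generators $\att,\btt$ (of orders $p,q$, with $\att\btt$ of order $r$) whose length is controlled by how many triangles $\gamma$ crosses. I would bound this number in two regimes.
\begin{enumerate}
\item If $\gamma$ stays outside the standard embedded disks around the cone points, each crossing of a triangle costs definite length, bounded below uniformly in $(p,q,r)$.
\item If $\gamma$ enters such a disk and winds around a cone point, then $g$ contains a power $\att^k$ (or $\btt^k$, $(\att\btt)^k$). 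In that case I would use the rotation--product formula: for rotations by angles $2\alpha,2\beta$ about points at distance $d$,
$$\cosh(\ell/2)=\cosh d\,\sin\alpha\sin\beta-\cos\alpha\cos\beta$$
whenever the right-hand side exceeds $1$.
\end{enumerate}
Together these should show that, up to conjugacy and inversion, $g$ is of the form $\att^i\btt^j$ or of some bounded-length form such as $\att^i\btt^j\att^k\btt^l$.

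\textbf{Trace computation and monotonicity.} For these short words, the trace is an explicit expression in $\cos(\pi i/p)$, $\cos(\pi j/q)$ and the side lengths of $T$, which are given by the hyperbolic law of cosines. I would verify two things.
\begin{enumerate}
\item Each such expression, when it exceeds $2$ in absolute value, is minimized by taking $(p,q,r)$ as small as possible, since increasing any of $p,q,r$ moves the cone points apart.
\item Among admissible triples, the extremal case is $(2,3,7)$, where the minimum of $|\mathrm{tr}|/2$ over hyperbolic elements is $\cos(2\pi/7)+\tfrac12$.
\end{enumerate}
The remaining triples near $(2,3,7)$ would be checked directly, namely $(2,3,8)$, $(2,4,5)$ and $(3,3,4)$. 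All others dominate one of these coordinatewise, and monotonicity then applies.

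\textbf{Main obstacle.} I expect the hardest step to be the reduction, that is, showing uniformly in $(p,q,r)$ that a short closed geodesic must come from a short word. The danger is large $p,q,r$, where cone points are nearly cusps and $\gamma$ could wind many times near one of them. I would handle this first with the collar and cusp-neighbourhood picture around cone points: a geodesic winding $k$ times around a cone point of order $p$ is forced far out, with length growing in $k$, so only $k\le p/2$ with small total winding needs to be considered. These cases reduce to the two-rotation formula above.
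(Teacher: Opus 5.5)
Your conversion $|\mathrm{tr}(g)|=2\cosh(\ell(g)/2)$ is correct, and so is your identification of the extremal element. In the $(2,3,7)$ group, $\mathrm{tr}\,\att=0$, $\mathrm{tr}\,\btt=1$ and $|\mathrm{tr}\,\att\btt|=2\cos(\pi/7)$ give $\mathrm{tr}[\att,\btt]=4\cos^2(\pi/7)-1=2\cos(2\pi/7)+1$. But this only shows that the constant is attained. The lower bound over all hyperbolic elements of all triangle groups is the whole content of the lemma, and the paper does not prove it from scratch. It notes that a hyperbolic element of $G$ is hyperbolic in the ambient triangle group, and then quotes Nakanishi \cite[Proposition~3.1]{length_gap}, which gives exactly $|\mathrm{tr}(\gamma)|\ge 2\cos(2\pi/7)+1$. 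Your proposal tries to reprove that theorem, and neither of its two steps is actually established.

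\textbf{The reduction step.} It ends with ``these should show''.
\begin{itemize}
\item Uniform crossing costs in regime (1) require cone-point disks whose radii grow with the order. Near a vertex of angle $\pi/p$ the two sides stay within small distance of each other for a length of about $\log p$. You do not set this up.
\item Regime (2) handles only a product of two rotations. A short geodesic can pass near several cone points and produce words with several syllables; the extremal $[\att,\btt]=\att\btt\att\btt^{2}$ already has four.
\item The claimed growth of length with winding is only a heuristic. You flag this yourself as the main obstacle.
\end{itemize}

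\textbf{The monotonicity step.} It is not even well posed.
\begin{itemize}
\item The groups $G(p,q,r)$ are not nested, and the admissible exponents depend on $(p,q,r)$.
\item Fixing exponents changes the rotation angles. For example, $\btt^2$ is an involution in $G(2,4,5)$ but a rotation by $4\pi/5$ in $G(2,5,5)$.
\item You therefore give no correspondence sending hyperbolic elements of a larger triple to hyperbolic elements of $(2,3,7)$, $(2,4,5)$ or $(3,3,4)$ without increasing $|\mathrm{tr}|$. So ``dominates coordinatewise, hence monotonicity applies'' reduces nothing.
\item Increasing $p$ moves the cone points apart, but it also shrinks the rotation angles $\pi i/p$. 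So the separation of the vertices alone does not determine how the trace changes.
\end{itemize}

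As written, the proposal is a research plan rather than a proof. To close it, either cite Nakanishi's trace bound for triangle groups as the paper does, or supply complete arguments for both the uniform reduction and the cross-group comparison.
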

\begin{proof}
	Any hyperbolic element $\gamma\in G$ is hyperbolic in the corresponding triangle group. Thus by \cite[Proposition 3.1]{length_gap}, the trace of $\gamma$ as an element of $\mathrm{PSL}_2 \R$ satisfies 
	$$|\mathrm{tr}(\gamma)|\ge 2\cos(2\pi/7)+1=2\cosh(\delta/2).$$
	Hence the translation length of $\gamma$ is at least $\delta$, and our conclusion follows.
\end{proof}

\begin{thm}\label{thm: von Dyck gap}
	Let $G$ be a hyperbolic von Dyck group. There is a uniform constant $C>0$ independent of $G$ such that either $\scl_G(\gamma)\ge C$ or $\scl_G(\gamma)=0$ for any $\gamma\in G$, and the latter case occurs if and only if $\gamma$ has finite order or is conjugate to its inverse.
    In particular, the exceptional case occurs only when
    $\gamma^n$ is conjugate to $\gamma^{-n}$ for some $n\in\Z_+$.
    
\end{thm}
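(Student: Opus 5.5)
We will argue geometrically, using pleated surfaces (Theorem~\ref{thm: restrict to pleated}) together with the uniform length bound of Lemma~\ref{lemma: length estimate}. The vanishing directions are immediate from Lemma~\ref{lemma: scl basic prop}: $\scl_G(\gamma)=0$ if $\gamma$ has finite order or $\gamma$ is conjugate to $\gamma^{-1}$. Since $G$ is a cocompact Fuchsian group, every infinite order element is hyperbolic. So fix a hyperbolic $\gamma$ that is not conjugate to $\gamma^{-1}$.

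The approach is to bound $-\chi^-(S)/2n(S)$ from below, uniformly, for every monotone admissible surface $S$ for $\gamma$. By Theorem~\ref{thm: restrict to pleated}, we may assume each such $S$ is realized by a pleated map $f\col S\to \Hbb^2/G$. That is, $S$ carries a hyperbolic metric with geodesic boundary, $f$ is a local isometry off a geodesic lamination, and each boundary component maps to the closed geodesic representing $\gamma$. In particular, the boundary components wrap $\gamma^{k_j}$ with $\sum_j k_j=n(S)$.

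Gauss--Bonnet gives $\area(S)=2\pi(-\chi(S))$, and the boundary length of $S$ is $n(S)\,\ell(\gamma)$, where $\ell(\gamma)\ge\delta$ by Lemma~\ref{lemma: length estimate}. We then want a uniform isoperimetric-type inequality for pleated hyperbolic surfaces with geodesic boundary, $\area(S)\ge c\cdot \len(\partial S)$ for some $c>0$. Indeed, a uniform collar lemma shows that each boundary geodesic of length $L$ has an embedded collar of width depending only on $L$, with area comparable to $L$ when $L$ is bounded below. Combining these gives
\[
2\pi(-\chi(S))\ \ge\ c\,\delta\, n(S),
\]
hence $\scl_G(\gamma)\ge c\delta/4\pi$.

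The main obstacle is this collar step when $\chi(S)$ is large relative to the number of boundary components. A single boundary component could in principle have huge length, with the collars of different portions overlapping, since $S$ may not be embedded. The collar lemma for a geodesic boundary component of length $L$ on a hyperbolic surface, however, gives a collar of width $w(L)$ with area $L\sinh w(L)$. We have $L\sinh w(L)=L/\sinh(L/2)$, which is bounded below only for bounded $L$, and this fails for large $L$. We will therefore instead use the general fact that a hyperbolic surface with totally geodesic boundary satisfies $\len(\partial S)\le C'\area(S)$ for a universal $C'$. This is equivalent to the statement that the boundary has an embedded one-sided neighborhood of definite width $\epsilon_0$. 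That statement holds because two boundary arcs cannot come within distance $\epsilon_0$ along a short orthogeodesic without creating a short essential arc, which would force area concentration. If this proves delicate, we will fall back on a compactness and contradiction argument: suppose the gap fails along a sequence $(G_i,\gamma_i,S_i)$. Using $\ell(\gamma_i)\ge\delta$ and pleated surfaces, we will pass to a geometric limit in which a hyperbolic surface has boundary length exceeding any multiple of its area, which contradicts the uniform boundary-collar estimate. This yields the existence of a uniform, though inexplicit, constant $C$. The final assertion then follows, since $\gamma$ of finite order or conjugate to its inverse implies $\gamma^n$ is conjugate to $\gamma^{-n}$ for some $n\ge1$.
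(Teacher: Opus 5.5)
There is a genuine gap at the central step: the ``general fact'' that a hyperbolic surface with totally geodesic boundary satisfies $\len(\partial S)\le C'\area(S)$ for a universal $C'$ is false. A pair of pants whose three boundary geodesics all have length $L$ has area $2\pi$ but boundary length $3L\to\infty$. For large $L$ such a pants is almost entirely thin. It consists of long strips in which two boundary arcs run side by side, with opposite orientations, at distance tending to $0$, and the short orthogeodesics across these strips force no area concentration. Pleated pants of this kind certainly occur in hyperbolic surfaces: take any pants with non-cyclic image and long boundary. So $\partial S$ has no embedded one-sided neighborhood of definite width. Your compactness fallback aims at a contradiction with the same false estimate, so it does not rescue the argument.

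Structurally, apart from $\ell(\gamma)\ge\delta$ you never use discreteness of $G$ or the hypothesis that $\gamma$ is not conjugate to $\gamma^{-1}$. That hypothesis is exactly what must prevent long stretches of $\partial S$ from running anti-parallel and close to other stretches of $\partial S$. When $\gamma$ is conjugate to $\gamma^{-1}$, nothing prevents this, and $\scl$ vanishes.

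The missing idea is to analyze these thin strips rather than deny them, as in Lemma~\ref{lemma: Calegari lemma}. Double $S$ and take an $\epsilon$-thin/thick decomposition. The thin part meets $\partial S$ in at most $-6\chi(S)$ segments (two sides of each of at most $-3\chi(S)$ rectangles). An area count shows the thick part carries boundary length at most $4\pi(-\chi(S))/\epsilon$. So if $\len(\partial S)/(-\chi(S))$ is large, some thin segment $\sigma$ has $\len(\sigma)>2\ell(\gamma)+4\epsilon$ and is $\epsilon$-close to an anti-parallel boundary segment. Lifting the rectangle to $\Hbb^2$ gives conjugates $g,g'$ of $\gamma$ with anti-aligned axes, and hence a point $m$ moved less than $4\epsilon$ by both $gg'$ and $g'g$.

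You must then rule this out group-theoretically:
\begin{enumerate}
\item By the uniform bound $\delta$, both $gg'$ and $g'g$ are trivial or elliptic.
\item $gg'=1$ would give $g'=g^{-1}$, making $\gamma$ conjugate to $\gamma^{-1}$. This is where that hypothesis enters.
\item If they are elliptic, their fixed points $z$ and $g'z$ are at least $\delta$ apart, which forces a small rotation angle. Then $[g,g']$ is hyperbolic with translation length at most $8\epsilon<\delta$, a contradiction.
\end{enumerate}

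Finally, Theorem~\ref{thm: restrict to pleated} is stated for manifolds, not for the orbifold $\Hbb^2/G$. You need to pass to a torsion-free finite-index normal subgroup $\Gamma$ and use the index formula (Lemma~\ref{lemma: index formula}), which replaces $\gamma$ by the chain $\sum_h h\gamma^k h^{-1}$ in $\Gamma$. The rectangle argument still applies there, since every boundary component maps to the axis of a conjugate of $\gamma$.
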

	
Our proof uses a modification of the proof of \cite[Theorem C]{Cal:sclhypmfd}; see also \cite[Chapter 3]{Cal:sclbook}. 
For the ease of future references, we formulate a key part of the argument in a more general setting as Lemma \ref{lemma: Calegari lemma} below.
For this general setup, we consider a \emph{pleated surface} $f:S\to M$ in a complete hyperbolic manifold $M$. Such a surface arises as a good representative in the homotopy class of a relative admissible surface (Proposition~\ref{prop: relative adm surf}).
Here a pleated surface $(S,f)$ consists of two pieces of data. The underlying surface $S$ is a hyperbolic surface of finite volume with geodesic boundary. 
The map $f:S\to M$ takes cusps into cusps and preserves the lengths of all rectifiable curves, 
and each point $p\in S$ is in the interior of a straight line segment which is mapped by $f$ to a straight line segment. 
The key properties that we will use are $\area(S)=-2\pi\chi(S)$ and that $f$ preserves lengths.

We are actually interested in the even more general setup where $M$ is an \emph{orbifold} with orbifold fundamental group $G$, which we set to be the von Dyck group in the proof of Theorem \ref{thm: von Dyck gap}. To avoid technical terms, we will just take a finite-index torsion-free subgroup $\Gamma$ of $G$ and take finite covers to reduce the problem to pleated surfaces in \emph{manifolds}. In the statement below, the group $G$ is not assumed to be a finite extension of $\Gamma$, although it is the case of main interest.

\begin{lemma}[Calegari \cite{Cal:sclhypmfd}]\label{lemma: Calegari lemma}
	Fix a constant $\kappa>0$. Let $G$ be a subgroup of $\Isom^+(\Hbb^n)$ and let $\Gamma$ be a torsion-free discrete subgroup of $G$.
	Consider an arbitrary hyperbolic element $\gamma\in G$ of translation length at least $\kappa$ so that some nontrivial power of $\gamma$ lies in $\Gamma$.
	Let $M$ be the complete hyperbolic manifold $\Hbb^n/\Gamma$ and let $S$ be a surface of finite type. 
	Suppose $f: S\to M$ is a pleated surface (described as above) that takes cusps to cusps and each geodesic boundary to a geodesic loop in $M$ representing an element in $\Gamma$ that is conjugate to a positive power of $\gamma$ in $G$. 
	Then there is a constant $\mathcal{E}=\mathcal{E}(\kappa)>0$ 
	such that for any $0<\epsilon\le \mathcal{E}$ there is a segment $\sigma$ on $\partial S$ satisfying
    \begin{equation}\label{eqn: length lower bound}
        \len(\sigma)\ge \frac{\len(\partial S)}{-6\chi(S)}-\frac{2\pi}{3\epsilon};
    \end{equation}
	and we also have
    \begin{equation}\label{eqn: length upper bound}
        \len(\sigma)\le 2\cdot\len(\gamma)+4\epsilon
    \end{equation}
	unless there are $g,g'\in G$ conjugate to $\gamma$, and a common $m\in \Hbb^n$ such that $d(m,gg' m)<4\epsilon$ and $d(m,g'g m)<4\epsilon$.
\end{lemma}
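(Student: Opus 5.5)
I will follow Calegari's argument from \cite{Cal:sclhypmfd}: use the area bound $\area(S)=-2\pi\chi(S)$ to locate a long stretch of $\partial S$ whose collar is thin, and then analyse the two sides of that thin region in the universal cover.

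\textbf{Finding $\sigma$.} First I choose $\mathcal{E}(\kappa)$ smaller than a Margulis-type constant and smaller than $\kappa/100$. For $0<\epsilon\le\mathcal{E}$, I consider the $\epsilon$-neighbourhood of $\partial S$ inside $S$. Its area is at most $\area(S)=-2\pi\chi(S)$, and away from points where the collar self-meets it has area at least comparable to $\epsilon\cdot\len(\partial S)$. The precise bookkeeping follows Calegari: partition $\partial S$ into the set where the normal $\epsilon$-segments are embedded and disjoint, and the complement. The complement consists of points $p\in\partial S$ joined by an arc of length $<2\epsilon$ to another point $p'\in\partial S$.

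The embedded part has total length at most $2\pi\chi^-\!/\epsilon$ up to constants, which accounts for the $\frac{2\pi}{3\epsilon}$ term. The non-embedded part is organised into ``thin strips'': pairs of boundary segments fellow-travelling within $2\epsilon$. Gluing strips along short arcs yields a graph or spine. Its combinatorics, namely the Euler characteristic count giving at most $-3\chi(S)$ edges with each edge contributing two boundary segments, shows that there are at most $-6\chi(S)$ segments. Pigeonhole then gives a segment $\sigma\subset\partial S$ with $\len(\sigma)\ge \len(\partial S)/(-6\chi(S))-\frac{2\pi}{3\epsilon}$, which is \eqref{eqn: length lower bound}. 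Along $\sigma$ another boundary segment $\sigma'$ runs $2\epsilon$-close, with the strip between them oppositely oriented by the orientation of $S$.

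\textbf{Upper bound.} Lift to $\Hbb^n$ via $f$, which preserves length, so boundary geodesics map to geodesics. The lift of $\sigma$ lies on the axis of some conjugate $g$ of a power of $\gamma$, and $\sigma'$ lies on the axis of some conjugate $g'$, with orientation reversed relative to $\sigma$. These are conjugates of positive powers of $\gamma$ because the boundary is monotone. Since $G$ is discrete, I can arrange $g$ and $g'$ to be conjugates of $\gamma$ itself translating along these axes, with translation length $\len(\gamma)\ge\kappa$. Suppose $\len(\sigma)>2\len(\gamma)+4\epsilon$. Then the two axes stay $2\epsilon$-close over a length exceeding $2\len(\gamma)$, and they are antiparallel. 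Pick $m$ on the axis of $g$ near the middle of the fellow-travelling region. Then $g'$ moves $g m$ back near $m$, with error accumulating to less than $4\epsilon$ because we stay inside the fellow-travelling region. This gives $d(m,g'gm)<4\epsilon$, and symmetrically $d(m,gg'm)<4\epsilon$. This is exactly the excluded alternative, which proves \eqref{eqn: length upper bound} otherwise.

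\textbf{Main obstacle.} The main obstacle is the combinatorial count giving the constant $-6\chi(S)$ together with the error term $\frac{2\pi}{3\epsilon}$. It requires careful control of the area of embedded collars, using that the thin part is a union of strips whose core graph has Euler characteristic at least $\chi(S)$. A second difficulty is handling cusps, where $\epsilon$ below the Margulis constant ensures that cusp neighbourhoods do not contribute boundary segments.
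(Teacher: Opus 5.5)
Your outline matches the paper's: pigeonhole over at most $-6\chi(S)$ fellow-travelling boundary segments, an area bound for the rest of $\partial S$, then the antiparallel-axes displacement argument. However, the step that carries \eqref{eqn: length lower bound} is asserted rather than proved, and the justification you sketch is wrong.

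You need that the part of $\partial S$ lying close to another sheet of $\partial S$ is covered by at most $-3\chi(S)$ strips, each contributing one segment on each side. Your claim that the strips' core graph has Euler characteristic at least $\chi(S)$ is false. A one-holed torus with very long boundary can be cut along three disjoint seams of tiny length into two right-angled hexagons. This gives three strips, and the strip graph (one boundary vertex, three edges) has Euler characteristic $-2<-1=\chi(S)$.

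What is true is that disjoint, pairwise non-homotopic essential arcs number at most $-3\chi(S)$, since the complementary disks are at least hexagons. To use this, you must show two things you do not supply: short arcs from different strips cannot cross, and each strip meets each boundary geodesic in a single interval. These are collar/Margulis-type facts.

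The paper obtains all of this at once by doubling $S$ along $\partial S$. The $2\epsilon$-thin part of $DS$ consists of cusps and disjoint annuli about geodesics shorter than $2\epsilon$, at most $-\tfrac32\chi(DS)=-3\chi(S)$ of them. Each annulus meeting $\partial S$ meets $S$ in a rectangle whose two opposite sides on $\partial S$ are at Hausdorff distance at most $\epsilon$. Finally, the thick part of $\partial S$ has an embedded $\epsilon/2$-collar.

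Second, your constants are inconsistent. Normal $\epsilon$-segments only make the non-embedded points $2\epsilon$-close. The displacement argument pays the fellow-travelling distance $\eta$ four times: $\eta$ at $m$, $\eta$ at $gm$, and $2\eta$ to place $g'r'$ near $m'$ on the other axis. So from $2\epsilon$-closeness you get $d(m,g'gm)\le 8\epsilon$, and only under the hypothesis $\len(\sigma)>2\len(\gamma)+8\epsilon$, not the $4\epsilon$ you claim.

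The fix is to use normal $\epsilon/2$-segments instead. Then the strips are $\epsilon$-close and the displacement bound becomes $4\epsilon$. Also, $\tfrac{\epsilon}{2}\cdot\len(\text{embedded part})\le -2\pi\chi(S)$ then produces exactly the term $\tfrac{2\pi}{3\epsilon}$, matching the paper rather than holding only ``up to constants''.

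A minor point: $G$ is not assumed discrete, and discreteness is not needed. If a boundary loop represents $h\gamma^k h^{-1}$ with $k>0$, then $g=h\gamma h^{-1}$ already has that axis and translates in the same direction.
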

\begin{proof}
	We essentially follow the same proof strategy as in \cite[Lemma 3.7]{Cal:sclbook}. 
	Consider the double $DS$ of $S$ across the geodesic boundaries. Take the $2\epsilon$ thin-thick decomposition of $DS$ into $DS_{thin}$ and $DS_{thick}$ with respect to some $\epsilon>0$ small compared to the $2$-dimensional Margulis constant. 
	Denote $S_{thin}\defeq S\cap DS_{thin}$ and $S_{thick}\defeq S\cap DS_{thick}$ respectively. If $\epsilon$ is small compared to $\len(\partial S)$, then $S_{thin}$ consists of cusp neighborhoods and $r$ rectangles, where each rectangle doubles to an annulus in $DS_{thin}$. Such annuli are disjoint in $DS$, hence there are at most $-3\chi(DS)/2=-3\chi(S)$ these annuli, i.e. $r\le -3\chi(S)$.
	Note that this setup works for any $\epsilon$ smaller than a constant $\mathcal{E}$ determined by $\kappa$ as $\len(\partial S)\ge\len(\gamma)\ge\kappa$ by our assumption.
	
	Note that each rectangle in $S_{thin}$ has two opposite sides on $\partial S$ at Hausdorff distance no more than $\epsilon$. So the $r$ disjoint rectangles give rise to $2r$ disjoint segments in $S_{thin}\cap \partial S$, and it follows from the pigeonhole principle that there is at least one such segment $\sigma$ satisfying
	$$\len(\sigma)\ge \frac{\len(S_{thin}\cap\partial S)}{2r}\ge \frac{\len(S_{thin}\cap\partial S)}{-6\chi(S)}.$$
	To estimate $\len(S_{thin}\cap\partial S)$, note that $\len(\partial S)=\len(S_{thin}\cap\partial S)+\len(S_{thick}\cap\partial S)$. In addition, the $\epsilon/2$-neighborhood of $S_{thick}\cap\partial S$ is embedded in $S$, so we have $$\frac{\epsilon}{2}\cdot \len(S_{thick}\cap\partial S) \le \area (S_{thick}\cap\partial S)\le\area(S)=-2\pi\chi(S).$$
	Combining such estimates, we obtain the first desired inequality:
	$$\len(\sigma)\ge \frac{\len(S_{thin}\cap\partial S)}{-6\chi(S)}\ge \frac{\len(\partial S)}{-6\chi(S)}-\frac{2\pi}{3\epsilon}.$$
	
	To obtain the other inequality, let $R$ be the rectangle in $S_{thin}$ containing $\sigma$.
	The restriction of the map $f:S\to M$ to $R$ lifts to the universal cover $\widetilde{M}=\Hbb^n$, on which $G$ acts by isometry. Denote the image of $R$ in $\Hbb^n$ as $\widetilde{R}$.
	Then one side $\tilde{\sigma}$ of $\widetilde{R}$ projects to $\sigma$, and we denote its opposite side as $\tilde{\sigma}'$. 
	Note that $\tilde{\sigma}$ and $\tilde{\sigma}'$ have Hausdorff distance no more than $\epsilon$ as $R$ is in the $\epsilon$-thin part and the map $f$ is length-preserving.
	By our assumption, $\tilde{\sigma}$ (resp. $\tilde{\sigma}'$) lies on the axis $L$ (resp. $L'$) of some element $g\in G$ (resp. $g'\in G$) conjugate to $\gamma$. Orient the axes in a way consistent with the orientation of $\tilde{\sigma}$ and $\tilde{\sigma'}$ respectively induced from the orientation on $\partial S$. Then $g$ and $g'$ act on the axes by translation in the positive direction respectively, and the two axes $L$ and $L'$ are almost anti-aligned; see Figure \ref{fig: sigmatilde}.
	
	\begin{figure}
		\labellist
		\small 
		\pinlabel $p$ at 30 -5
		\pinlabel $s$ at 60 0
		\pinlabel $m$ at 202 12
		\pinlabel $\tilde{\sigma}$ at 155 0
		\pinlabel $q$ at 375 -5
		\pinlabel $r$ at 347 0
		\pinlabel $L$ at 412 0
		
		\pinlabel $p'$ at 47 85
		\pinlabel $s'$ at 62 84
		\pinlabel $m'$ at 205 70
		\pinlabel $\tilde{\sigma}'$ at 255 83
		\pinlabel $q'$ at 365 85
		\pinlabel $r'$ at 344 83
		\pinlabel $L'$ at 415 85
		\endlabellist
		\centering
		\includegraphics[scale=0.7]{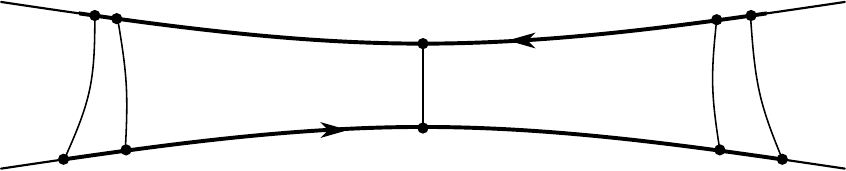}
		\caption{The geodesic segment $\tilde{\sigma}$ on the axis $L$ and the corresponding segment $\tilde{\sigma}'$ on the axis $L'$. 
			Corresponding points, say $r$ and $r'$, on the two segments are at distances no more than $\epsilon$.}\label{fig: sigmatilde}
	\end{figure}
	
	Suppose to contrary that $\len(\tilde{\sigma})=\len(\sigma)>2\cdot\len(\gamma)+4\epsilon$. Let $p,q$ be the endpoints of $\tilde{\sigma}$, and let $m$ be the midpoint of $\tilde{\sigma}$. 
	We use $d_L^y(x)$ to denote the \emph{signed} distance of a point $x\in L$ to $y\in L$ so that it is positive if and only if $x$ is in the positive direction of $y$ with respect to the orientation on $L$. We use a similar notation $d_{L'}^y(x)$ for signed distance on $L'$.
	Then up to swapping $p$ and $q$, we have $d_L^m(p)=-\len(\sigma)/2 <-\len(\gamma)-2\epsilon$ and $d_L^m(q)=\len(\sigma)/2 >\len(\gamma)+2\epsilon$.
	Since $\tilde{\sigma}$ and $\tilde{\sigma}'$ have Hausdorff distance no more than $\epsilon$, there are points $p',q',m'$ on $\tilde{\sigma}'$ such that $d(p',p)\le \epsilon$ and similarly for $q'$ and $m'$. By the triangle inequality, we have $d(p',m')\ge d(p,m)-d(p,p')-d(m,m')> \len(\gamma)$. Using the fact that $L$ and $L'$ are almost anti-aligned, we have $d_{L'}^{m'}(p')>\len(\gamma)$, and by a similar argument we have $d_{L'}^{m'}(q')<-\len(\gamma)$.
	
	The isometry $g$ takes $m$ to some point $r\defeq gm$ on $L$.
	As $d(r,m)$ is the translation length of $g$ and $g$ is conjugate to $\gamma$, we have $d_L^m(r)=\len(\gamma)<d_L^m(q)$, so $r$ lies on $\tilde{\sigma}$.
	Thus there is a point $r'$ on $\tilde{\sigma}'$ with $d(r',r)\le\epsilon$. 
	By the triangle inequality, we have 
	$$d(m',r')\le d(m',m)+d(m,r)+d(r,r')\le \len(\gamma)+2\epsilon,$$
	and similarly $d(m',r')\ge \len(\gamma)-2\epsilon$.
	Incorporating the orientation, we have $|d_{L'}^{m'}(r')+\len(\gamma)|\le 2\epsilon$.
	This means $d(g'^{-1}m',r')\le 2\epsilon$, or equivalently $d(m',g'r')\le 2\epsilon$.
	Now by the triangle inequality, we obtain
	$$d(m,g'gm)\le d(m,m')+d(m',g'r')+d(g'r',g'gm)\le \epsilon+2\epsilon + d(r',gm)=3\epsilon + d(r',r)\le 4\epsilon.$$
	
	Now we deduce $d(m,gg'm)\le 4\epsilon$ analogously. Let $s'\defeq g'm'$. We showed earlier $d_{L'}^{m'}(p')>\len(\gamma)$, so $s'$ lies on $\tilde{\sigma}'$, and there is a point $s$ on $\tilde{\sigma}$ with $d(s,s')\le \epsilon$. Similar to the argument in the paragraph above, we have $|d_L^m(s)+\len(\gamma)|\le 2\epsilon$. 
	This means that $d(m,gs)\le 2\epsilon$.
	So by the triangle inequality, we have
	$$d(m,gg'm)\le d(m,gs)+d(gs,gg'm)\le 2\epsilon+ d(s,g'm)\le 2\epsilon + d(s,s') +d(s',g'm)\le 3\epsilon+d(m',m)\le 4\epsilon.$$
\end{proof}

Now we apply Lemma~\ref{lemma: Calegari lemma} to prove Theorem~\ref{thm: von Dyck gap}.
\begin{proof}
    It suffices to give a uniform lower bound $\scl_G(\gamma)\ge C$ for all hyperbolic elements $\gamma\in G$ for which $\gamma$ is not conjugate to its inverse. In fact, given this, $\scl_G(\gamma)=0$ implies that $\gamma$ is either elliptic, which has finite order, or is conjugate to its inverse. In both situations, some power $\gamma^n$ is conjugate to its inverse for some $n\in\mathbb{Z}_+$, which in turn implies $\scl_G(\gamma)=0$.

	So from now on, we consider a hyperbolic element $\gamma\in G$ for which $\gamma$ is not conjugate to its inverse.
    
    Fix a torsion-free finite index normal subgroup $\Gamma$ of $G$, and let $d$ be the size of the finite quotient $H=G/\Gamma$. We think of $\Gamma$ as the fundamental group of a hyperbolic closed surface $\Sigma$, where the hyperbolic structure is induced from the embedding of $G$ in $\mathrm{PSL}_2\R$.
	
	Then $\gamma^k\in \Gamma$ for some $1\le k\le d$. By the index formula (Lemma~\ref{lemma: index formula}),
	we have
	$$\scl_G(\gamma^k)=\frac{1}{d} \scl_\Gamma(\sum_{h\in H} h\gamma^k h^{-1})$$
	It suffices to give a lower bound of $\frac{1}{kd}\scl_\Gamma(\sum_h h\gamma^k h^{-1})$ that is independent of $\gamma\in G$ (and $k$) for which $\gamma^n$ is not conjugate to $\gamma^{-n}$ for all $n\in\Z_+$.
    
	Consider any admissible surface $f:S\to \Sigma$ for the chain $c\defeq \sum_{h\in H} h\gamma^k h^{-1}$ of a certain degree $n$.
    By Theorem~\ref{thm: restrict to pleated} (with $M=\Sigma$, which has no cusp), for the sake of estimating $\scl_\Gamma(c)$, we may assume that $f$ has a pleated representative. Now we apply Lemma~\ref{lemma: Calegari lemma} with the constant $\kappa=\delta$ from Lemma~\ref{lemma: length estimate}. 

    \begin{claim}\label{claim: rule out exceptional}
        There is a uniform constant $\mathcal{E}'>0$ such that for any $0<\epsilon\le\mathcal{E}'$ and any hyperbolic element $\gamma$ in a von Dyck group $G$ for which $\gamma$ is not conjugate to its inverse, there are no conjugates $g,g'\in G$ of $\gamma$ and $m\in \Hbb^2$ with $d(m,gg' m)<4\epsilon$ and $d(m,g'g m)<4\epsilon$ for the standard action of $G$ on $\Hbb^2$.
    \end{claim}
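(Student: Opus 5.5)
We argue by contradiction, using the discreteness of $G$ together with the uniform translation length bound of Lemma~\ref{lemma: length estimate}. Suppose $g,g'\in G$ are conjugates of $\gamma$ and $m\in\Hbb^2$ satisfy $d(m,gg'm)<4\epsilon$ and $d(m,g'gm)<4\epsilon$.

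\textbf{Step 1: the commutator nearly fixes $m$.} Consider the commutator-like element $u\defeq (gg')(g'g)^{-1}=gg'g^{-1}g'^{-1}=[g,g']$. It moves $m$ by less than $8\epsilon$. In the same way, $gg'$ and $g'g$ each move $m$ by less than $4\epsilon$.

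\textbf{Step 2: uniform elliptic control.} By Lemma~\ref{lemma: length estimate}, every hyperbolic element of $G$ has translation length at least $\delta$. So if $4\epsilon<\delta$, then $gg'$ and $g'g$ cannot be hyperbolic, and if $8\epsilon<\delta$, then $[g,g']$ cannot be hyperbolic either. Since $G$ is a cocompact Fuchsian group, it has no parabolics, so each of these three elements is elliptic or trivial.

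An elliptic element of $G$ is a rotation of angle $2\pi j/o$ about a cone-point lift, with $o\in\{p,q,r\}$. Such a rotation moves a point $m$ by a small amount only if $m$ is close to its center; the needed closeness depends on the rotation angle, which is at least $2\pi/o$.

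\textbf{Step 3: obtain the contradiction.} Two routes seem available.

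\emph{Route (a): via the product.} The product $gg'$ of two conjugates of $\gamma$ moves $m$ by less than $4\epsilon$. We want $gg'=1$, that is $g'=g^{-1}$. This would make $\gamma$ conjugate to $\gamma^{-1}$, contradicting the hypothesis. Ruling out the elliptic case, however, requires a uniform bound, and since $o$ can be arbitrarily large, the rotation angle $2\pi/o$ is not uniformly bounded below.

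\emph{Route (b): via commuting.} Instead we show that $g$ and $g'$ commute. Then they are hyperbolic elements with a common axis, and in a discrete group $g'=g^{\pm1}$, since both are conjugates of the same primitive-type element with the same translation length. If $g'=g$, then $d(m,g^2m)\ge 2\delta>4\epsilon$, a contradiction. If $g'=g^{-1}$, then $\gamma$ is conjugate to its inverse, again a contradiction.

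To get commutation, use the Margulis lemma in $\Hbb^2$. Choose $\mathcal E'$ smaller than $\delta/8$ and smaller than a fraction of the $2$-dimensional Margulis constant. The elements $gg'$ and $g'g$ both move $m$ less than $4\epsilon$, so the group $\langle gg',g'g\rangle$ is elementary. Since there are no parabolics and the group is discrete, it is finite cyclic, fixing a single point $c$ near $m$, or trivial.

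If the group is trivial, then $g'=g^{-1}$ and we are done.

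Otherwise, $gg'$ and $g'g=g^{-1}(gg')g$ are rotations about $c$ and about $g^{-1}c$ respectively, of the same angle. If $c\neq g^{-1}c$, these two rotations generate a non-elementary group, or at least a group containing a hyperbolic element, namely a product of rotations about distinct centers. Any such element would move $m$ by less than $8\epsilon<\delta$, which is a contradiction. Hence $g$ fixes $c$. But $g$ is hyperbolic and has no fixed point, so this is also a contradiction.

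\textbf{Main obstacle.} The key step is the last one: showing, uniformly in $p,q,r$, that two rotations about distinct centers both moving $m$ slightly force a hyperbolic element of small translation length. Here I would use that the orbit of a cone point is uniformly separated (the triangle geometry gives a uniform lower bound on distances between distinct cone-point lifts, since the injectivity radius is controlled via Lemma~\ref{lemma: length estimate}). I would then pick a suitable short word in the two rotations, such as the commutator, and show it is hyperbolic, with displacement of $m$ bounded by a multiple of $\epsilon$.
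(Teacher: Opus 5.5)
Your overall frame is the same as the paper's. The element $[g,g']=(gg')(g'g)^{-1}$ moves $m$ by less than $8\epsilon<\delta$, so by Lemma~\ref{lemma: length estimate} it is not hyperbolic, and the contradiction has to come from proving that it \emph{is} hyperbolic. That decisive step is missing, and the principle you lean on for it is false: a product of rotations about distinct centers need not be hyperbolic. For example, in the von Dyck group suitable rotations of orders $p$ and $q$ about two vertices of a tile multiply to a rotation about the third vertex. What makes it work here is the specific pairing. Since $g'g=g'(gg')g'^{-1}$, the elements $gg'$ and $g'g$ are rotations by the \emph{same} angle $\theta$ about $z$ and $g'(z)$. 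Moreover $d(z,g'(z))\ge\delta$ simply because $g'$ has translation length at least $\delta$; no separation of cone-point lifts is needed. The paper then notes that the points moved less than $4\epsilon$ by either rotation form open disks of a common radius $r$ about $z$ and $g'(z)$, with $\sinh(2\epsilon)=\sinh r\,|\sin(\theta/2)|$. Both disks contain $m$, so $r\ge\delta/2$, and the choice $\sinh(2\epsilon)\le\sinh(\delta/2)/\sqrt{2}$ forces $|\theta|<\pi/2$. For such $\theta$, the rotation by $\theta$ about $z$ composed with the rotation by $-\theta$ about $g'(z)$ maps a boundary arc into itself, so $[g,g']$ fixes a point of $\partial\Hbb^2$. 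It is nontrivial because $gg'$ and $g'g$ have different fixed points, and $G$ has no parabolics. Hence $[g,g']$ is hyperbolic with translation length less than $8\epsilon<\delta$, a contradiction.

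Your Margulis step could in fact close the argument, but not as you state it. A discrete elementary subgroup of $\PSLtwo(\R)$ without parabolics, generated by two elliptic elements, is either finite cyclic \emph{or} infinite dihedral, generated by two half-turns about points of a common geodesic. You omitted the second case.
\begin{itemize}
\item The finite case is impossible, since $gg'$ and $g'g$ are nontrivial rotations about $z\neq g'(z)$; this is the contradiction you reach.
\item In the dihedral case, $(gg')(g'g)=[g,g']$ is hyperbolic of translation length $2d(z,g'(z))$ while moving $m$ by less than $8\epsilon$, again a contradiction.
\end{itemize}
With this correction your route gives a uniform but non-explicit $\mathcal{E}'$ depending on the Margulis constant. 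The paper's argument is elementary and gives the explicit conditions $8\epsilon<\delta$ and $\sinh(2\epsilon)\le\sinh(\delta/2)/\sqrt{2}$. As written, however, you assert that the group fixes a single point, then handle distinct centers by the false general claim above, and leave that step open as the ``main obstacle''.
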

    
    Given the claim above, choose $\epsilon=\mathcal{E}'$, then
    we obtain a segment $\sigma$ on $\partial S$,
    satisfying both inequalities (\ref{eqn: length lower bound}) and (\ref{eqn: length upper bound}).

    Note that each $h\gamma^kh^{-1}$ has length $k\cdot\len(\gamma)$ and there are $d$ choices of $h\in H$, so
    $\len(\partial S)=nkd\cdot \len(\gamma)$ since $S$ has degree $n$.
    
    The inequalities (\ref{eqn: length lower bound}) and (\ref{eqn: length upper bound}) together imply
    $$\left(\frac{nkd}{-6\chi^-(S)}-2\right)\len(\gamma)\le 4\epsilon+\frac{2\pi}{3\epsilon}.$$
    Using $\len(\gamma)\ge\delta$, we get
    $$\frac{nkd}{-6\chi^-(S)}-2\le\frac{1}{\delta}\left(4\epsilon+\frac{2\pi}{3\epsilon}\right),$$
    or equivalently,
    $$\frac{nkd}{-6\chi^-(S)}\le2+\frac{1}{\delta}\left(4\epsilon+\frac{2\pi}{3\epsilon}\right).$$
    This yields a positive lower bound of $-\chi^-(S)/nkd$ and hence a bound of $\frac{1}{kd}\scl_\Gamma(c)$. Moreover, the bound is uniform since $\epsilon$ is.
    Given this, we obtain a uniform lower bound $C$ of $\scl_G(\gamma)$ for any hyperbolic element $\gamma$ that is not conjugate to its inverse.

    So it remains to prove the claim.
    \begin{proof}[Proof of Claim~\ref{claim: rule out exceptional}]
    	We choose $\epsilon$ such that $8\epsilon$ is less than the constant $\delta$ from Lemma~\ref{lemma: length estimate} and
    	$$\sinh (2\epsilon)\le \frac{1}{\sqrt{2}}\sinh\frac{\delta}{2}.$$
    	
        Suppose the exceptional situation occurs, namely, there are $g,g'\in G$ conjugate to $\gamma$, and a common $m\in \Hbb^2$ such that $d(m,gg' m)<4\epsilon$ and $d(m,g'g m)<4\epsilon$. Then Lemma~\ref{lemma: length estimate} implies that $gg'$ and $g'g$ have translation length smaller than $4\epsilon<\delta$, so both must be elliptic. 
	
    Now suppose $gg'$ is elliptic fixing some $z\in\Hbb^2$, then $g'g$ is also elliptic with the same angle of rotation fixing $g'(z)$, which is distinct from $z$ since $g'$ is hyperbolic. 
    For an elliptic element $h\in \PSLtwo(\R)$ fixing a point $z$ with an angle of rotation $\theta\in [-\pi,\pi]$, for any $t>0$,
    the set of points $m$ with $d(hm,m)< t$ is the disk of some radius $r$ centered at $z$, and we have
    $$\sinh \frac{t}{2}=\sinh r \cdot \sin \frac{\theta}{2}.$$
    Now since $z$ and $g'(z)$ are at least $\delta$ apart, the existence of the common $m$ with $t=4\epsilon$ implies that $r\ge \delta/2$, so 
    $$\sin\frac{\theta}{2}\le \frac{\sinh(2\epsilon)}{\sinh(\delta/2)}.$$
    By our choice of $\epsilon$, we know $gg'$ and $g'g$ rotate by a common angle $|\theta|<\pi/2$.
	In particular, one can easily observe that $(gg')(g'g)^{-1}=[g,g']\in G$ has a fixed point on $\partial \Hbb^2$ (see Figure \ref{fig: hyperbolic}), 
	which implies either $[g,g']=id$ or $[g,g']$ is a hyperbolic element. 
	The former case cannot happen since $g$ and $g'$ have distinct fixed points. 
	In the latter case $[g,g']$ has translation length no more than $8\epsilon<\delta$
	as it takes the point $p_1$ to $p_3$ in Figure \ref{fig: hyperbolic} with 
	$$d(p_1,p_3)\le d(p_1,p_2)+d(p_2,p_3)\le 4\epsilon+4\epsilon=8\epsilon.$$
	This is impossible since translation lengths of hyperbolic elements in von Dyck groups are at least $\delta$ by Lemma~\ref{lemma: length estimate}.
    \end{proof}
    
    \begin{figure}
    	\labellist
    	\small 
    	\pinlabel $z$ at 83 135
    	\pinlabel $g'(z)$ at 180 135
    	\pinlabel $p_1$ at 133 150
    	\pinlabel $p_2$ at 122 132
    	\pinlabel $p_3$ at 126 105
    	\pinlabel $a_1$ at 262 105
    	\pinlabel $a_2$ at 262 150
    	\pinlabel $a_3$ at 258 92
    	\pinlabel $b_1$ at -2 165
    	\pinlabel $b_2$ at -5 150
    	\pinlabel $b_3$ at -2 97
    	\endlabellist
    	\centering
    	\includegraphics[scale=1]{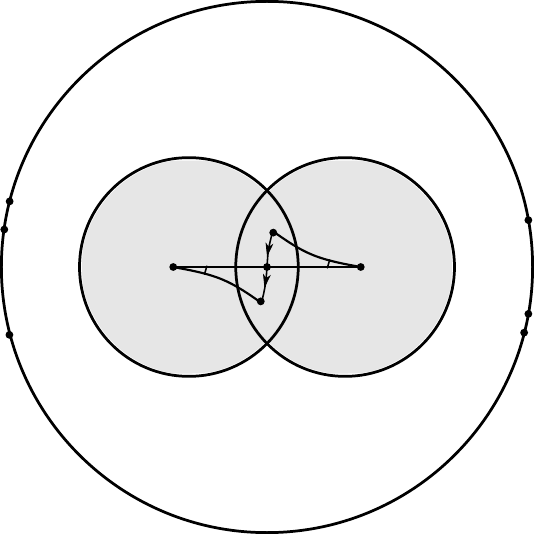}
    	\caption{The elliptic isometry $gg'$ fixes $z$ and rotates by an angle $\theta<\pi/2$ clockwise, and $(g'g)^{-1}$ fixes $g'(z)$ and rotates by the same angle $\theta$ counterclockwise. As $(g'g)^{-1}$ moves points on the right (resp. left) slower (faster) than $gg'$ in Euclidean distance, we can find $a_2=(g'g)^{-1} a_1$ and $a_3=gg'(a_2)$ with relative positions as shown, and similarly $b_2=(g'g)^{-1} b_1$ and $b_3=gg'(b_2)$. This implies that the lower circular arc from $a_1$ to $b_1$ is mapped inside itself by $[g,g']=gg'(g'g)^{-1}$, showing the existence of a fixed point.
    	The shaded disks are points moved by at most $4\epsilon$ under $g'g$ and $gg'$ respectively, so $d(p_1,p_2)\le 4\epsilon$ and $d(p_2,p_3)\le 4\epsilon$.}\label{fig: hyperbolic}
    \end{figure}
\end{proof}

Finally, we deduce Theorem~\ref{thmA: orbifold uniform gap}.
\begin{proof}[Proof of Theorem~\ref{thmA: orbifold uniform gap}]
	If $B$ has nonempty boundary, then the result follows from Theorem~\ref{thmA: orbifold uniform gap with boundary} with a better bound $1/24$.
	If $B$ is closed and is not a sphere with $3$ cone points, then it follows from Theorem~\ref{thm: closed orbifold generic case} with an explicit bound $1/36$.
	Finally, if $B$ is a sphere with $3$ cone points, then $G=\pi_1(B)$ is a hyperbolic von Dyck group, so the result follows from Theorem~\ref{thm: von Dyck gap}.
\end{proof}

\appendix

\section{Existence of pleated representative}

The main goal of the appendix is to show the following theorem, which allows us to restrict our attention to pleated surfaces when we compute scl in finite-volume hyperbolic manifolds relative to cusps (i.e. the space $\Ends(M)$ of ends). The proof methods provide the existence of pleated surfaces in a general setting, which might be well-known to experts but seems hard to find in the literature.
\begin{thm}\label{thm: restrict to pleated}
    Let $M$ be a hyperbolic manifold of finite-volume. Let $c=\sum c_i g_i$ be a reduced rational chain with each $g_i\in\pi_1(M)$ hyperbolic. Then
    for any admissible surface $f:S\to M$ for $c$ relative to the cusps of degree $n(S)$, there is a pleated surface $f':S'\to M$ that is also admissible for $c$ relative to the cusps such that
    $$\frac{-\chi(S')}{2n(S')}\le \frac{-\chi(S)}{2n(S)}.$$
    In particular,
    $$\scl_{M,\Ends(M)}(c)=\inf_S \frac{-\chi(S)}{2n(S)},$$
    where the infimum is taken over relative admissible surfaces $f:S\to M$ that admit pleated representatives.
\end{thm}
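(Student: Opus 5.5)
We would prove Theorem~\ref{thm: restrict to pleated} by simplifying the admissible surface combinatorially and then straightening it geometrically, in the spirit of Thurston's construction of pleated surfaces via spinning an ideal triangulation.

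\textbf{Step 1: normalize the surface.} Starting from a relative admissible surface $f:S\to M$ of degree $n(S)$, we would make several reductions, none of which increases $-\chi(S)/2n(S)$.
\begin{itemize}
\item Remove sphere and disk components. Disk components cannot occur, since each $g_i$ is hyperbolic and hence nontrivial.
\item Pass to monotone surfaces, which is allowed by \cite[Proposition 2.13]{Cal:sclbook}.
\item Compress any essential simple closed curve in $S$ whose image is null-homotopic. This only increases $\chi$ and keeps $n(S)$.
\item Replace every extra boundary component mapped into a cusp neighborhood by a puncture. Since $\chi$ of the punctured surface equals $\chi$ of the compact surface, the ratio is unchanged.
\item If some boundary component maps to a parabolic element, treat it likewise as a cusp. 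If it maps to a trivial element, cap it off with a disk, which increases $\chi$.
\end{itemize}
After these steps, $S$ is a finite-type surface with $\chi(S)<0$. Its closed boundary components map to loops freely homotopic to the closed geodesics of positive powers of the $g_i$. Its punctures go to cusps, and $f$ is $\pi_1$-injective on the simple closed curves we care about. If $\chi(S)\ge 0$ remains for some component, that component is an annulus joining $g_i^a$ and $g_j^{-b}$. Monotonicity and the reducedness of $c$ exclude this.

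\textbf{Step 2: straighten the boundary.} Homotope $f$ so that each boundary component maps by a constant-speed parametrization onto its geodesic representative in $M$. Each cusp of $S$ is sent out the corresponding cusp of $M$. This is possible because every boundary class is hyperbolic, so it has a unique closed geodesic.

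\textbf{Step 3: spin a triangulation.} Choose a maximal collection of disjoint proper arcs in $S$ between boundary components and punctures, forming an ideal triangulation of $S$ with the boundary treated as ideal vertices. Spin each arc's endpoints infinitely around the boundary geodesics in the direction of their orientation. In the universal cover, each spun arc then has well-defined endpoints on $\partial\Hbb^n$: these are the attracting fixed points of the lifts of the boundary elements, or parabolic fixed points at cusps. Map each spun arc to the geodesic in $\Hbb^n$ joining its endpoints, and each complementary ideal triangle to the ideal geodesic triangle spanned by its three vertices. Together with the boundary geodesics, the result is an equivariant piecewise-totally-geodesic map. Pulling back the hyperbolic metric gives a hyperbolic structure $S'$ on $S$ with geodesic boundary, of area $-2\pi\chi(S)$, and the map is a pleated surface homotopic to the original.

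\textbf{Main obstacle: degenerate triangles.} The triangle construction fails if two vertices of some triangle coincide at infinity. This would happen, for example, if two boundary lifts share an attracting fixed point, in which case the image triangle degenerates. Discreteness of $\pi_1(M)$ means lifts of hyperbolic elements with a common fixed point share an axis, so they are powers of a common primitive element.
\begin{itemize}
\item Such a coincidence would mean an arc of the triangulation joins two boundary components (or one to itself) whose images are commensurable with compatible orientations.
\item Within a single boundary component, it would indicate an essential arc homotopic into the boundary geodesic.
\end{itemize}
We would handle this with further surgery. Cut along the offending arc and reglue, replacing the annulus between the two boundary components by a single boundary component. Monotonicity ensures the orientations agree. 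This operation does not decrease $\chi(S)$ and keeps the degree. We then check that it terminates.

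Alternatively, one can avoid reduction altogether. Choose the spinning directions of the boundary components generically; then coincident vertices occur only when two boundary components are mapped to the same geodesic with the same orientation, and they can be merged. This merging is where reducedness of $c$ and monotonicity are used.

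Once all triangles are nondegenerate, $f':S'\to M$ is an admissible pleated surface with $n(S')=n(S)$ and $-\chi(S')\le-\chi(S)$. The infimum formula then follows from Proposition~\ref{prop: relative adm surf}.
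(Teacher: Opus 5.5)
Your route (spin one global ideal triangulation with vertices on $\partial S$, then repair degenerate triangles by surgery) differs from the paper's. Part of your repair is correct. If the offending arc $\beta$ joins two \emph{distinct} boundary components, adjacent lifts with a common attracting endpoint map to $t^p,t^q$ with $p,q>0$ on one axis. Cutting along $\beta$ raises $\chi$ by one, and the merged curve maps to $t^{p+q}$, again a positive power of a conjugate of the same $g_i$. The orientations agree because of the shared attracting endpoint and because $t^m$ is never conjugate to $t^{-m}$ in a torsion-free discrete group, not because of monotonicity.

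The gap is the case you mention but never treat. Suppose $\beta$ runs from a boundary component $C$ to itself, and the loop through $\beta$ maps into the maximal cyclic group $\langle t\rangle$ containing $f_*(C)=t^p$.
\begin{itemize}
\item The two lifts of $C$ at the ends of $\tilde\beta$ have \emph{identical} images. No choice of spin directions separates them, so your ``generic spinning'' alternative fails.
\item Cutting along $\beta$ splits $C$ into two curves with images $t^r$ and $t^{p-r}$. These need not be positive. When $g_i$ is a proper power, they need not be powers of conjugates of $g_i$ at all, so the cut surface is not admissible for $c$.
\end{itemize}

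Here is a concrete instance. Let $M$ be a closed hyperbolic genus-two surface with separating geodesic $t$, and let $c=t^2$. Build $S$, genus two with one boundary component $C$, from two pieces. First, two one-holed tori, mapped homeomorphically onto the two halves of $M$ so that both boundaries go to $t$. Second, a pair of pants $N$ mapped to the geodesic $t$ with $C\mapsto t^2$.

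This $f$ is incompressible. Indeed, $f_*$ is an isomorphism on $H_1$, and a separating essential curve bounds a one-holed torus whose two generators have independent images. Yet take the arc $\beta\subset N$ with $N$ a neighborhood of $C\cup\beta$. The loop through $\beta$ is homotopic to a torus boundary, and both ends of $\tilde\beta$ go to the attracting point of $t$. Cutting along $\beta$ leaves two one-holed tori with boundary $t$. That surface is admissible for the equivalent chain $2t$, but not for $c$.

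You also do not discuss two adjacent puncture lifts landing on the same parabolic point. Cutting along the arc does repair that case, producing a new cusp, but it needs saying.

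The idea you are missing is to change the \emph{decomposition} rather than the surface. The paper first makes $f$ incompressible, by compressing simple loops with trivial image and cutting along essential loops with parabolic image. It then chooses a pants decomposition in which every pair of pants has non-cyclic image, so that spinning each pants is nondegenerate.
\begin{itemize}
\item For positive genus (Proposition~\ref{prop: pleated rep in htpy class}), it Dehn twists along the curves $\beta_i$. The boundary images of a pants become $b^{-k}a^{-1}$, $a'b^k$, $a^{-1}a'$. Lemma~\ref{lemma: centralizer} shows at most one exponent $k$ can make that pants cyclic: otherwise $b$ would commute with $a$ or be parabolic, and both are excluded by incompressibility.
\item For planar components, Lemma~\ref{lemma: pleated surface genus 0 case} constructs such a decomposition by induction on the number of boundary components.
\end{itemize}
In the example above a pleated representative exists, just not one whose triangulation contains $\beta$. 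To rescue your approach you need one of two things: an analogous argument that a nondegenerate triangulation can always be chosen, or a different repair in the self-arc case, for instance a finite-cover argument restoring divisibility and hence admissibility for $c$.
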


The proof mainly relies on the proposition below.
\begin{prop}\label{prop: pleated rep in htpy class}
    Suppose $f:S\to M$ is a map from a compact surface $S$ with boundary to an orientable hyperbolic manifold $M^n$ of finite volume such that 
    \begin{enumerate}
        \item $f$ is incompressible in the sense that no simple loop has null-homotopic image and no essential loop (simple and not boundary parallel) has image homotopic into a cusp of $M$, and
        \item not all boundary components of $S$ are mapped to cusps of $M$.
    \end{enumerate}
    If the genus of $S$ is $g\ge1$, then $f:S\to M$ is homotopic to a pleated surface map.
\end{prop}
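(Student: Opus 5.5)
We plan to follow the classical Thurston construction of pleated surfaces: choose a maximal geodesic lamination on $S$ that contains the boundary, is made of finitely many leaves, and spirals toward the boundary curves in a controlled way, and then straighten $f$ leaf by leaf. The genus assumption $g\ge1$ will be used to choose a suitable ideal triangulation. The second hypothesis guarantees that at least one boundary curve maps to a loxodromic (hyperbolic) class, so there is a genuine closed geodesic to spiral toward.

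First, we homotope $f$ so that each boundary component not sent to a cusp maps to its closed geodesic in $M$. Boundary components sent to cusps will be treated as punctures, and we view them as ideal points. Incompressibility (hypothesis (1)) implies that $f_*$ is injective on the peripheral subgroups of non-cusp boundary components. It also implies that every essential simple closed curve in $S$ maps to a loxodromic element, not to a parabolic or trivial one. Using $g\ge1$, we pick a non-separating simple closed curve $\alpha$ and an ideal triangulation $\lambda$ of $S$ relative to the boundary geodesics. Concretely, $\lambda$ consists of the boundary curves (and $\alpha$ if convenient), together with finitely many bi-infinite arcs. Each arc either spirals toward a non-cusp boundary geodesic (all in the same direction) or exits into a cusp. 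The complementary regions of $\lambda$ must be ideal triangles. The role of $g\ge1$ is to make such a triangulation exist even when all but one boundary curve goes to cusps: a triangulation with only one spiraling end needs enough topology. For instance, arcs can spiral onto $\alpha$ and then onto the boundary, or we can cut along $\alpha$ to reduce to a case with at least two geodesic boundary components.

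Next, we realize $\lambda$ geometrically. Lifting to $\widetilde S\to\Hbb^n$, each leaf of $\widetilde\lambda$ has two ends. Each end is asymptotic either to a lift of a boundary curve, in which case it determines the attracting fixed point of the corresponding loxodromic element, or to a cusp, in which case it determines a parabolic fixed point. We must check that the two endpoints of every leaf map to \emph{distinct} points of $\partial\Hbb^n$. This is where incompressibility is used: if two endpoints coincided, the two peripheral elements would have a common fixed point. In a discrete torsion-free group they would then generate an elementary subgroup, forcing a simple loop built from them to be trivial or parabolic, which contradicts (1). Also, the three vertices of each lifted ideal triangle must be pairwise distinct, for the same reason. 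We then map each leaf to the geodesic joining its endpoints and each complementary triangle to the ideal geodesic triangle with those vertices. This is $\pi_1$-equivariant by construction, so it descends to a map $f'$. The map $f'$ is homotopic to $f$ because the endpoint map is equivariant and $S$ deformation retracts suitably. We pull back the hyperbolic metric to obtain the hyperbolic structure on $S$ (with geodesic boundary and cusps) for which $f'$ is a length-preserving pleated map.

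The main obstacle is the non-degeneracy step: showing that the endpoint assignment sends every ideal triangle to a nondegenerate triangle. Equivalently, distinct peripheral elements of $S$ that are adjacent in the triangulation must not share fixed points at infinity. This also requires handling the spiraling ends carefully, so that the resulting surface metric is complete with geodesic boundary. I would argue it first via the discreteness of $\pi_1(M)$ (elements sharing a fixed point commute or generate an elementary group) combined with hypothesis (1). If that is not enough for the genus-one case, I would choose the triangulation more carefully, making all spiraling ends go around the single geodesic boundary and $\alpha$.
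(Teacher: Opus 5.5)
Your overall framework (straighten a finite-leaved lamination spun onto the geodesic boundary) is reasonable. However, the non-degeneracy step you flag is a genuine gap, and the justification you sketch for it fails under the stated hypotheses. Hypothesis (1) is only a \emph{weak} incompressibility condition: simple loops do not map to trivial elements, and essential simple loops do not map to parabolics. It does not give injectivity of $f_*$. With injectivity your argument would go through, since distinct lifts of boundary curves have non-commuting stabilizers in $\pi_1(S)$, but without it the argument breaks down.

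Suppose two vertices of a lifted triangle, say lifts of boundary curves $C,C'$ joined by an edge $e$, have images with a common fixed point. Then the corresponding loxodromic images commute, so they lie in one cyclic group $\langle t\rangle$, say as $t^a$ and $t^b$. The simple loop ``built from them'' is the third boundary of the pair of pants $N(C\cup e\cup C')$. It maps to $t^{a}t^{\pm b}$, which is a nontrivial loxodromic unless the exponents cancel exactly. So (1) gives no contradiction: for an arbitrary spun triangulation, the pants around some edge may have cyclic image, and the corresponding ideal triangle collapses. Your fallback (``choose the triangulation more carefully'') is precisely the missing content.

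The missing idea is to adapt the decomposition to $f$ by Dehn twisting, which is what the paper does. It fixes a pants decomposition in which consecutive curves $\alpha_{i-1},\alpha_i$ cobound a pair of pants $P_i$ with a boundary component. A curve $\beta_0$ crosses all the $\alpha_i$; this is where $g\ge1$ enters. Further curves $\beta_1,\dots,\beta_{g-1}$ lie in the handles. Twisting $k$ times along $\beta_0$ turns the boundary images of $P_i$ into $b_0^{-k}a_{i-1}^{-1}$, $a_ib_0^k$, and $a_{i-1}^{-1}a_i$. By Lemma~\ref{lemma: centralizer}, if two distinct values of $k$ gave cyclic image, then one of two things would hold:
\begin{itemize}
\item $b_0$ is parabolic, which (1) excludes since $\beta_0$ is essential and simple; or
\item $b_0$ commutes with $a_i$, which is excluded since $[a_i,b_0]$ is represented by the simple boundary of the one-holed torus around $\alpha_i\cup\beta_0$.
\end{itemize}
So each pants has at most one bad twist value, and a common good $k$ exists. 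The handle pants are treated the same way using twists along the $\beta_i$. Thurston's spinning construction on pants with non-cyclic image then yields the pleated map.

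Your explanation of the role of $g\ge1$ is also off. Spun ideal triangulations exist in any genus, for example on a pair of pants with one geodesic boundary and two cusps. The genus is needed only to supply the twisting curve $\beta_0$. The planar case requires the separate inductive argument of Lemma~\ref{lemma: pleated surface genus 0 case}.
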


We need the following basic fact to prove Proposition~\ref{prop: pleated rep in htpy class}.
\begin{lemma}\label{lemma: centralizer}
    Let $\Gamma$ be a torsion-free discrete subgroup of $\Isom^+(\Hbb^n)$. Suppose there are nontrivial elements $a,a',b\in\Gamma$ such that $b^{-k}a^{-1}$, $a'b^k$, $a^{-1}a'$ lie in a cyclic subgroup for two different values of $k\in\Z$.
    \begin{enumerate}
        \item If $a^{-1}a'$ is a hyperbolic element, then $b$ commutes with $a$ (and similarly with $a'$).
        \item If $a^{-1}a'$ is a parabolic element, then $b$ is also a parabolic element fixing the same point at infinity as $a^{-1}a'$.
    \end{enumerate}
\end{lemma}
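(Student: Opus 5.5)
The plan is to exploit the basic structure of torsion-free discrete groups of isometries of $\Hbb^n$. Every nontrivial element is hyperbolic or parabolic. The centralizer of a hyperbolic element $h$ is the stabilizer of its axis, and in the torsion-free discrete setting this is virtually cyclic. More to the point, two elements having a common nontrivial power, or lying in a common cyclic subgroup, must have the same fixed points at infinity. For a parabolic element $p$ fixing $\xi\in\partial\Hbb^n$, every element of $\Gamma$ that commutes with $p$, or even fixes $\xi$, is parabolic fixing $\xi$. This is because a discrete group cannot contain both a hyperbolic element and a parabolic element with a common fixed point at infinity.

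First I will set up notation. Let $k_1\neq k_2$ be the two values. For each $j$, the elements $b^{-k_j}a^{-1}$, $a'b^{k_j}$ and $c\defeq a^{-1}a'$ lie in a cyclic subgroup $C_j$. I would first normalize by removing the dependence on $a'$. The element $c$ is nontrivial in both cases; if $c$ were trivial it would be neither hyperbolic nor parabolic, so it is excluded by the hypothesis. Since $C_1$ and $C_2$ both contain $c\neq 1$, and $\Gamma$ is torsion-free and discrete, the maximal cyclic subgroup containing $c$ is unique. In the hyperbolic case it is the stabilizer of the axis of $c$ intersected with $\Gamma$, which is cyclic. In the parabolic case I would instead use the stabilizer $P$ of the fixed point $\xi$ of $c$, which is a discrete group of parabolic elements. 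Either way, $C_1$ and $C_2$ lie in a common subgroup $A$: the axis stabilizer, or $P$. Hence $u_j\defeq b^{-k_j}a^{-1}\in A$ for $j=1,2$, which gives
\[ u_1u_2^{-1}=b^{-k_1}a^{-1}ab^{k_2}=b^{k_2-k_1}\in A. \]
So a nontrivial power $b^{m}$, with $m=k_2-k_1\neq0$, lies in $A$. Nontriviality holds because $\Gamma$ is torsion-free and $b\neq 1$.

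In case (2), $b^m$ lies in $P$, so it is parabolic fixing $\xi$. Then $b$ itself fixes $\xi$, since $b$ permutes the fixed points of $b^m$ and a parabolic element has a unique fixed point. Also $b$ cannot be hyperbolic, since otherwise $b^m$ would be hyperbolic. So $b$ is parabolic fixing $\xi$, which is exactly the conclusion.

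In case (1), $b^m$ preserves the axis $L$ of $c$. A nontrivial element preserving $L$ in a torsion-free discrete group is hyperbolic with axis $L$, because parabolic elements have no invariant geodesic. So $b^m$, and hence $b$, is hyperbolic with axis $L$. Next, $u_1=b^{-k_1}a^{-1}\in A$ also preserves $L$, so $a^{-1}=b^{k_1}u_1$ preserves $L$. Thus $a$ and $b$ both lie in the stabilizer of $L$ in $\Gamma$. By discreteness and torsion-freeness this stabilizer is infinite cyclic: it acts on $L$ by translations, possibly composed with rotations about $L$, and the point is that it embeds discretely and acts freely. Hence $a$ and $b$ commute. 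The same argument applied to $a'=a c$ shows $a'$ also preserves $L$, so $b$ commutes with $a'$ as well.

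The main obstacle is the claim that the stabilizer of an axis in a torsion-free discrete $\Gamma\le\Isom^+(\Hbb^n)$ is cyclic, or at least abelian. For $n\ge 4$, elements with axis $L$ act as translation times rotation in $SO(n-1)$. I would argue as follows. The stabilizer is discrete, and the translation-length map to $\R$ has kernel consisting of elliptic elements, which must be trivial by torsion-freeness. So the stabilizer injects into $\R$ with discrete image, and is therefore infinite cyclic. A similar argument handles $P$ only as a group of elements fixing $\xi$, for which no cyclicity is needed.
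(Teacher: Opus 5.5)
Your argument is correct and is essentially the paper's. Both show that a nontrivial power $b^{k_2-k_1}$ lies in a subgroup attached to $c=a^{-1}a'$ (the paper uses the centralizer of $c$, you use the stabilizer of its axis or of its fixed point $\xi$), and both conclude from the facts that this subgroup is infinite cyclic when $c$ is hyperbolic and consists of parabolics fixing $\xi$ when $c$ is parabolic. The only real difference is how the power of $b$ is produced: the paper uses that $b^{-k}a^{-1}$ commutes with $a'b^{k}$ to get $b^{-k}(a^{-1}a')b^{k}=a'a^{-1}$ for both values of $k$, whereas you take $u_1u_2^{-1}=b^{k_2-k_1}$ directly, which never uses $a'b^{k}$.
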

\begin{proof}
    Suppose $k\neq k'\in\Z$ are two such values. Then by assumption
    $b^{-k}a^{-1}$ and $a'b^k$ commute, so
    $$b^{-k} (a^{-1}a')b^k=(b^{-k}a^{-1})\cdot (a'b^k)=a'b^k\cdot b^{-k}a^{-1}=a'a^{-1},$$
    and the same holds replacing $k$ by $k'$.
    It follows that
    $$b^{-k} (a^{-1}a')b^k=b^{-k'} (a^{-1}a')b^{k'},$$
    or equivalently, $b^{k-k'}$ commutes with $a^{-1}a'$.

    If $a^{-1}a'$ is hyperbolic, then its centralizer consists of elements in $\Gamma$ that share the same axis, which is infinite cyclic as $\Gamma$ is torsion-free discrete, by considering the (signed) translation along the axis. Hence $b^{k-k'}$ lying in the centralizer implies that the same holds for $b$. We also know that $b^{-k}a^{-1}$ is in the centralizer of $a^{-1}a'$, so it must commute with $b$. It follows that $b$ commutes with $a$.

    If $a^{-1}a'$ is parabolic, then its centralizer consists of parabolic elements in $\Gamma$ that fix the same point at infinity, again since $\Gamma$ is torsion-free discrete. Thus $b$ must be a parabolic element fixing this point at infinity.    
\end{proof}

\begin{proof}[Proof of Proposition~\ref{prop: pleated rep in htpy class}]
    Let $m\ge1$ be the number of boundary components of $S$.
    The argument is inspired by the proof by Calegari \cite[Lemma 3.7]{Cal:sclbook}
    which handles the $m=1$ case when $M$ is closed. 
    So we assume $m\ge2$ below, and a similar proof works for the $m=1$ case.
    
    Realize $S$ topologically as in Figure \ref{fig: pants} with the indicated pants decomposition.
    We claim that after an appropriate modification of the pants decomposition by powers of Dehn twists around the curves $b_i$'s,
    the restriction of $f$ to each pants does not factor through $S^1$, and hence by Thurston's criterion $f$ has a pleated surface representative.
	
 	\begin{figure}
 		\labellist
 		\small 
 		\pinlabel $\color{blue}\alpha_0$ at 35 150
 		\pinlabel $\color{blue}\alpha_1$ at 145 215
 		\pinlabel $\color{blue}\alpha_2$ at 147 150
 		\pinlabel $\color{blue}\alpha_3$ at 140 120
 		\pinlabel $\beta_0$ at 90 225
 		\pinlabel $\beta_1$ at 240 52
 		\pinlabel $\beta_{g-1}$ at 430 52
 		\endlabellist
 		\centering
 		\includegraphics[scale=0.7]{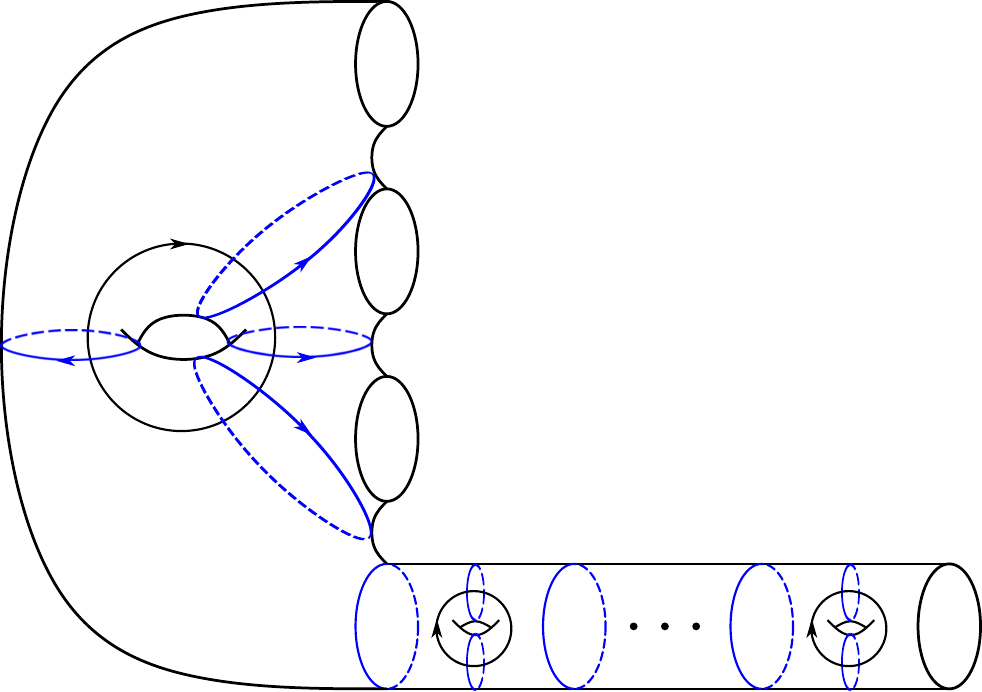}
 		\caption{A pants decomposition of $S$ for the case $m=4$.}\label{fig: pants}
 	\end{figure}
	
    Place the base point $*$ at the intersection of $\alpha_0$ and $\beta_0$, and denote their image under $f_*:\pi_1(S,*)\to \pi_1(M,*)$ as $a_0$ and $b_0$ respectively. Think of each loop $\alpha_i$ as a loop based at $*$ by following an arc on the $\beta_0$ curve. Let $\alpha_m$ be the based loop which is the concatenation $\beta_0\cdot \alpha_0\cdot \overline{\beta}_0$. Let $a_i\in \pi_1(M)$ be the image of each $\alpha_i$ under $f_*$ for all $0\le i\le m$.
    Then under the map $f$, the images of boundary loops of the pants $P_i$ represent $a_{i-1}^{-1},a_i,a_{i-1}^{-1}a_i$, where $0\le i\le m-1$.
    If we perform the $k$-th power of the Dehn twist along $\beta_0$ to modify the pants $P_i$, then the image of the boundary curves would become $b_0^{-k}a_{i-1}^{-1},a_ib_0^k,a_{i-1}^{-1}a_i$, where $k\in\Z$.
    If for two different values of $k$, the restriction of $f$ on the modified pants factors through $S^1$, then Lemma~\ref{lemma: centralizer} implies that either $b_0$ commutes with $a_i$, or $b_0$ is parabolic. The latter cannot occur by assumption, and the former cannot occur since $[\alpha_i,\beta_0]$ represents the boundary of the once-punctured torus neighborhood of $\alpha_i\cup \beta_0$, which is simple and by assumption cannot have trivial image in $\pi_1(M)$. Thus there is at most one value of $k$ that is bad for the pants $P_i$, for each $0\le i\le m-1$. Hence there is some $k\in\Z$ that is good for all of them.

    For a similar reason, for each $1\le i\le g-1$, the loop $\beta_i$ witnesses two pants, and for each of them there is at most one value of $k$ such that the $k$-th power of the Dehn twist along $\beta_i$ modifies it to a pants on which $f$ factors through $S^1$. Hence there must be a choice of $k$ that makes both pants good for this purpose. This part of the proof works exactly as in the proof of \cite[Lemma 3.7]{Cal:sclbook}. This proves the claim and completes the proof.
\end{proof}

The next lemma handles the genus $0$ case of Theorem~\ref{thm: restrict to pleated}, as it requires a different proof.
\begin{lemma}\label{lemma: pleated surface genus 0 case}
    Let $S$ be a compact connected planar surface with negative Euler characteristic, and let $M$ be a (complete) hyperbolic manifold. Suppose $f:S\to M$ is a continuous map such that $f_*(S)$ has non-cyclic image. Then there is a pants decomposition of $S$ such that the restriction of $f$ to each pair of pants has non-cyclic image. Cutting $S$ along loops in the decomposition with parabolic image, we get a surface $S'$ on which the restriction of $f$ is homotopic to a pleated surface map if $M$ has finite volume.
\end{lemma}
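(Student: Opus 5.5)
I will prove the lemma by induction on the number $m$ of boundary components of $S$; since $S$ is planar with $\chi(S)<0$, we have $m\ge 3$ and $S$ is a sphere with $m$ holes.

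\textbf{Base case.} If $m=3$, then $S$ itself is a pair of pants and the hypothesis is exactly the conclusion.

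\textbf{Inductive step.} For $m\ge 4$, fix a basepoint and standard generators $c_1,\dots,c_m$ of $\pi_1(S)$, one around each boundary component, with $c_1\cdots c_m=1$. Write $x_i=f_*(c_i)$. The subgroup $f_*\pi_1(S)$ is generated by $x_1,\dots,x_{m-1}$. In a torsion-free discrete subgroup of $\Isom^+(\Hbb^n)$, which is the setting here since $M$ is a manifold, two elements generate a cyclic group if and only if they commute. Also, commuting is transitive among nontrivial elements, because centralizers of nontrivial elements are elementary and abelian.

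The aim is to find a separating simple closed curve $\beta$ that cuts $S$ into a pair of pants $P$ and a planar surface $S_1$ with $m-1$ boundary components, such that both $f_*\pi_1(P)$ and $f_*\pi_1(S_1)$ are non-cyclic. Induction applied to $S_1$ then finishes the argument.

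Curves cutting off pants correspond to pairs of boundary components $i,j$ joined by an arc; the pants then has image $\langle x_i, w x_j w^{-1}\rangle$ for suitable $w$. The plan is as follows.
\begin{enumerate}
\item Since the image is non-cyclic, some pair $x_i,x_j$ fails to commute. After reordering boundary components by a homeomorphism (braid moves change generators by conjugation), I may take them adjacent, say $x_1,x_2$ non-commuting.
\item Then $P$ is cut off by $\beta\simeq c_1c_2$, and $S_1$ has image $\langle x_1x_2,x_3,\dots,x_m\rangle$.
\item If this image is cyclic, I instead replace the arc by twisting it around other boundary components. This conjugates $x_2$ by elements such as $x_3^k$, and I use an argument as in Lemma~\ref{lemma: centralizer} to show that only finitely many choices fail.
\end{enumerate}
Concretely, if $\langle x_1x_2,x_3,\dots,x_m\rangle$ and its twisted versions are all cyclic, then $x_3$ commutes with $x_1x_2$ and with $x_1x_3^kx_2x_3^{-k}$ for several $k$. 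By transitivity of commuting, this forces everything to commute, contradicting non-cyclicity. I expect this bookkeeping to be the main obstacle. The fallback is a cleaner counting argument: among the finitely many ways of choosing pairs, at least one choice has both pieces non-cyclic, which follows from the transitivity of commutation.

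For the second assertion, cut $S$ along the decomposition curves whose image is parabolic, yielding $S'$. Each pants then has non-cyclic image, so $f$ restricted to it does not factor through $S^1$. Boundary curves with parabolic image are sent into cusps, which is possible because $M$ has finite volume and parabolics correspond to cusps. The remaining curves are hyperbolic and are realized as geodesics. Thurston's criterion, as used in the proof of Proposition~\ref{prop: pleated rep in htpy class}, then makes each pants pleated with geodesic or cusped boundary, and the pieces glue along common geodesics to give a pleated map homotopic to $f|_{S'}$.
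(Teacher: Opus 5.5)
\textbf{There is a genuine gap in the inductive step.} That step is the only real content of the lemma, and your concrete version uses the wrong curves.

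\emph{The wrong family of curves.} A Dehn twist about a curve parallel to $C_3$ does not change the cutting curve at all. A simple arc from $C_1$ to $C_2$ cannot spiral around $C_3$ more than once in a row. So conjugating $x_2$ by $x_3^k$ with $|k|\ge 2$ does not correspond to any simple cutting curve. The natural family comes from Dehn twists along the curve $\delta\simeq c_2c_3$ enclosing $C_2$ and $C_3$. With $y=x_2x_3$, the pants image becomes $\langle x_1,\,y^kx_2y^{-k}\rangle$ and the complement image becomes $\langle x_1y^kx_2y^{-k},\,y^kx_3y^{-k},\,x_4,\dots,x_m\rangle$. The generator around $C_3$ moves as well, so the group you wrote is not the complement's image.

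\emph{The twisted pants is never checked.} Twisting can make $x_1$ commute with the new conjugate of $x_2$, and you do not rule this out.

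\emph{How to repair it in the Fuchsian case.} Suppose $\langle x_1x_2,x_3,\dots,x_m\rangle$ lies in a maximal cyclic subgroup $B$ and some twisted complement ($k\neq 0$) is cyclic. It contains $x_4\in B$, so $y^kx_3y^{-k}\in B$. This forces $y\in B$, and then $x_2,x_1\in B$, a contradiction. Two bad values of $k$ for the pants force $y$, hence $x_3$, to commute with $x_2$, which again gives $x_1,x_2\in B$. As submitted, though, step 3 is a plan with incorrect bookkeeping plus an unproved ``fallback'', not an argument.

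\textbf{A false ingredient.} In a torsion-free discrete subgroup of $\Isom^+(\Hbb^n)$ it is not true that two elements generate a cyclic group iff they commute. Commuting parabolics in a rank-two cusp of a hyperbolic $3$-manifold generate $\Z^2$. If $f_*\pi_1(S)$ is such a $\Z^2$ (e.g.\ a $4$-holed sphere with boundary images $\mu,\lambda,\mu^{-1},\lambda^{-1}$), step 1 finds no non-commuting pair, yet the lemma still has to be proved. This cannot happen for the closed surface $\Sigma$ in the von Dyck argument, but the lemma is stated for general $M$.

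\textbf{How the paper avoids both problems.} It never twists, and it works with membership in maximal cyclic subgroups rather than with commutation. It fixes disjoint arcs $\beta_i$ from one component $C_0$ and lets $A$ be the unique maximal cyclic subgroup containing $a_0$. The pants $P_i$ then has non-cyclic image iff $a_i\notin A$. If some $a_j\in A$ with $j\ge 1$, one arc change moves it to the last slot, and then any $i<k$ with $a_i\notin A$ has non-cyclic complement image $H_i$. Otherwise at most one of the $k-1\ge 2$ groups $H_i$ is cyclic. That finite counting argument is what should replace your step 3. Your handling of the pleating assertion agrees with the paper.
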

\begin{proof}
    Given the existence of such a pants decomposition on $S$, we have an induced pants decomposition on $S'$, where all loops in the decomposition lying in the interior of $S'$ have hyperbolic image. Map each such loop to the geodesic representative. Since each pair of pants has non-cyclic image, applying Thurston's spinning construction to each pants of the decomposition yields the desired pleated surface map.

    So it remains to show the existence of such a pants decomposition on $S$. We proceed by induction on the number $m\ge3$ of boundary components. There is nothing to show for the base $m=3$.
    Suppose the result holds for planar surfaces with $k\ge3$ boundary components and now assume $m=k+1$.
    Label the boundary components of $S$ as $C_0,\ldots, C_k$.
    Place a base point $*$ on $C_0$ and choose proper arcs $\beta_i$ for $1\le i\le k$ with disjoint interior connecting $C_0$ to other boundary components, we obtain based loops $\alpha_i$ freely homotopic to $C_i$ for each $0\le i\le k$.
    Denote the image of $*$ in $M$ also as $*$ and write $\Gamma=\pi_1(M,*)$
    Let $a_i\in \Gamma$ be the element represented by $f(\alpha_i)$. Then we have
    $$a_0a_1\cdots a_k=id.$$
    
    Note the fact that each nontrivial element $g$ in the torsion-free discrete subgroup $\Gamma$ of $\Isom^+(\Hbb^n)$ is contained in a unique maximal cyclic subgroup: If $g$ is hyperbolic, this is the centralizer $Z(g)$; If $g$ is parabolic, this is a maximal $\mathbb{Z}$ subgroup of $Z(g)\cong \mathbb{Z}^{n-1}$.
    
    Let $A$ be the maximal cyclic subgroup containing $a_0$. If $a_i\in A$ for some $i\ge1$, then up to changing the proper arcs $\beta_j$'s as shown in Figure \ref{fig: planar}, we may assume $a_k\in A$; Algebraically this is letting $a'_j=a_j$ for all $j<i$, $a'_j=a_ia_{j+1}a_i^{-1}$ for all $i\le j<k$ and $a'_k=a_i$.

    
    
	
	\begin{figure}
		\labellist
		\small 
		\pinlabel $C_0$ at 5 220
		\pinlabel $C_1$ at 75 180
		\pinlabel $C_2$ at 105 105
		\pinlabel $C_i$ at 195 105
		\pinlabel $C_k$ at 227 180
		\pinlabel $\beta_1$ at 105 215
		\pinlabel $\beta_2$ at 125 155
		\pinlabel $\beta_i$ at 200 155
		\pinlabel $\beta_k$ at 220 225
		\pinlabel $\color{red}\beta'_k$ at 195 70
		\pinlabel $*$ at 152 305
		\pinlabel $*'$ at 173 280
		\endlabellist
		\centering
		\includegraphics[scale=0.6]{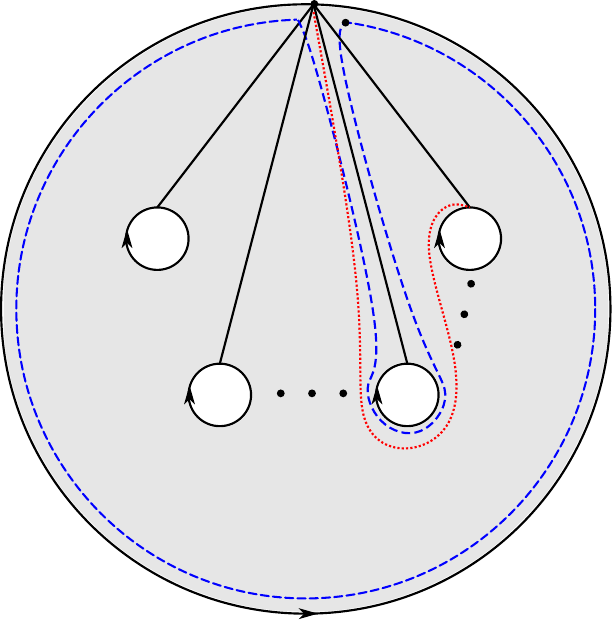}
		\caption{The planar surface $S$ with proper arcs $\beta_i$'s connecting $C_0$ to $C_i$. The dotted red arc $\beta'_k$ indicates the modification that changes $\alpha_{j+1}$ to $\alpha'_{j}$ for all $i\le j<k$ in the special case $j=k-1$. The dashed blue loop cuts out the pair of pants that is the neighborhood of $C_0\cup C_i\cup \beta_i$.}\label{fig: planar}
	\end{figure}
	
    So we assume below either $a_i\notin A$ for all $i\ge1$, or $a_k\in A$.
    We attempt to cut out a pair of pants $P_i$ that is the neighborhood of $C_0\cup C_i\cup \beta_i$, for some $1\le i< k$. For a suitable choice of base point (see Figure \ref{fig: planar}), the fundamental group of the complement $S_i$ has image 
    $$H_i=\langle a_0a_i, a_i^{-1}a_1a_i, \ldots, a_i^{-1}a_{i-1}a_i, a_{i+1},\ldots, a_k \rangle.$$
    
    It suffices to find an index $i$ such that $H_i$ is not cyclic and $a_i\notin A$, in which case $S_i$ has a desired pants decomposition by the induction hypothesis and the union with $P_i$ yields the desired decomposition of $S$.

    Consider the two cases separately.
    If $a_k\in A$, since $f_*(S)$ is not cyclic, there is some $1\le i<k$ such that $a_i\notin A$. We show $H_i$ is non-cyclic. If not, as $H_i$ contains $a_k\in A$, we must have $H_i\subset A$. Then $a_0a_i\in A$, which implies $a_i\in A$ since $a_0\in A$ by definition, contradicting our choice.

    Now suppose $a_i\notin A$ for all $i\ge1$. We show there is at most one $1\le i<k$ such that $H_i$ is cyclic. Indeed, suppose $H_i$ and $H_j$ are both cyclic for some $1\le i<j<k$. Let $A_i$ and $A_j$ be the unique maximal subgroup containing $H_i$ and $H_j$, respectively. Then by definition $a_k\in H_i\le A_i$ and $a_k\in H_j\le A_j$, so we must have $A_i=A_j$. 
    In addition, as $i<j$, we have $a_j\in H_i\le A_i$ and $a_j^{-1}a_ia_j\in A_j$. As $A_i=A_j$, it follows that $a_i,a_j\in A_i=A_j$. Finally, since $a_0a_i\in A_i$ we obtain $a_0\in A_i$, so we must have $A=A_i=A_j$. This contradicts the assumption that $a_i\notin A$.
\end{proof}

Finally we prove Theorem~\ref{thm: restrict to pleated}.
\begin{proof}[Proof of Theorem~\ref{thm: restrict to pleated}]
    The second assertion clearly follows from the first one. To show the first assertion, consider any admissible surface $f:S\to M$ for $c$ relative to the cusps. 

    If $S$ contains any simple loop $\gamma$ whose image under $f$ is null-homotopic, we can cut $S$ along $\gamma$ and glue in two disks to obtain a new admissible surface of the same degree, which does not increase $-\chi^-(S)$. If $S$ contains an essential loop $\gamma$ whose image under $f$ is homotopic to a cusp (i.e. parabolic), we can cut $S$ along $\gamma$, which creates two new boundary components that land in cusps. The new surface is still admissible relative to the cusps, and this does not change the degree or $-\chi^-(S)$.

    After finitely many such steps, we may assume that $f$ is incompressible in the sense of Proposition~\ref{prop: pleated rep in htpy class}. In addition, any component with all boundary components mapped to cusps can be discarded.
    Moreover, since $c$ is reduced, each component of $S$ has negative Euler characteristic.

    Now for each component $\Sigma$ of $S$ that has positive genus, Proposition~\ref{prop: pleated rep in htpy class} implies that $\Sigma$ has a pleated representative.
    For each planar component of $S$, by Lemma~\ref{lemma: pleated surface genus 0 case}, $f$ admits a pleated representative (and no cutting is needed as it is already incompressible).
\end{proof}

\subsection*{Acknowledgments}
Lvzhou thanks OpenAI Codex for suggesting the proof strategy of Lemma~\ref{lemma: pleated surface genus 0 case}, although the proof was manually reorganized and simplified for readability.
LC is partially supported by NSF DMS-2506702.

\bibliographystyle{alpha}
\bibliography{orbifold_gaps}

\begin{thebibliography}{{Tao}16}

\bibitem[Bav91]{bavard}
Christophe Bavard.
\newblock Longueur stable des commutateurs.
\newblock {\em Enseign. Math. (2)}, 37(1-2):109--150, 1991.

\bibitem[BBF16]{BBF}
Mladen Bestvina, Ken Bromberg, and Koji Fujiwara.
\newblock Stable commutator length on mapping class groups.
\newblock {\em Ann. Inst. Fourier (Grenoble)}, 66(3):871--898, 2016.

\bibitem[Bro81]{brooks}
Robert Brooks.
\newblock Some remarks on bounded cohomology.
\newblock In {\em Riemann surfaces and related topics: {P}roceedings of the
  1978 {S}tony {B}rook {C}onference ({S}tate {U}niv. {N}ew {Y}ork, {S}tony
  {B}rook, {N}.{Y}., 1978)}, volume~97 of {\em Ann. of Math. Stud.}, pages
  53--63. Princeton Univ. Press, Princeton, N.J., 1981.

\bibitem[Cal08]{Cal:sclhypmfd}
Danny Calegari.
\newblock Length and stable length.
\newblock {\em Geom. Funct. Anal.}, 18(1):50--76, 2008.

\bibitem[Cal09]{Cal:sclbook}
Danny Calegari.
\newblock {\em scl}, volume~20 of {\em MSJ Memoirs}.
\newblock Mathematical Society of Japan, Tokyo, 2009.

\bibitem[CC25]{CaiClay}
Tommy~Wuxing Cai and Adam Clay.
\newblock Generalized torsion in amalgams, 2025.

\bibitem[CF10]{CF:sclhypgrp}
Danny Calegari and Koji Fujiwara.
\newblock Stable commutator length in word-hyperbolic groups.
\newblock {\em Groups Geom. Dyn.}, 4(1):59--90, 2010.

\bibitem[CFL16]{CFL16}
Matt Clay, Max Forester, and Joel Louwsma.
\newblock Stable commutator length in {B}aumslag-{S}olitar groups and
  quasimorphisms for tree actions.
\newblock {\em Trans. Amer. Math. Soc.}, 368(7):4751--4785, 2016.

\bibitem[CH20a]{CH:sclgapgog_old}
Lvzhou Chen and Nicolaus Heuer.
\newblock Spectral gap of scl in graphs of groups and $3$-manifolds, 2020.
\newblock arXiv preprint 1910.14146v2.

\bibitem[CH20b]{CH:sclgapgog_new}
Lvzhou Chen and Nicolaus Heuer.
\newblock Spectral gap of scl in graphs of groups and $3$-manifolds, 2020.
\newblock arXiv preprint 1910.14146v4.

\bibitem[CH23]{CH:RAAG_chain}
Lvzhou Chen and Nicolaus Heuer.
\newblock Stable commutator length in right-angled {A}rtin and {C}oxeter
  groups.
\newblock {\em J. Lond. Math. Soc. (2)}, 107(1):1--60, 2023.

\bibitem[Che20]{Chen:sclBS}
Lvzhou Chen.
\newblock Scl in graphs of groups.
\newblock {\em Invent. Math.}, 221(2):329--396, 2020.

\bibitem[Heu19]{Heuer}
Nicolaus Heuer.
\newblock Gaps in {SCL} for amalgamated free products and {RAAG}s.
\newblock {\em Geom. Funct. Anal.}, 29(1):198--237, 2019.

\bibitem[IMT19]{gentorsion}
Tetsuya Ito, Kimihiko Motegi, and Masakazu Teragaito.
\newblock Generalized torsion and decomposition of {$3$}--manifolds.
\newblock {\em Proc. Amer. Math. Soc.}, 147(11):4999--5008, 2019.

\bibitem[Nak89]{length_gap}
Toshihiro Nakanishi.
\newblock The lengths of the closed geodesics on a {R}iemann surface with
  self-intersections.
\newblock {\em Tohoku Math. J. (2)}, 41(4):527--541, 1989.

\bibitem[Ser80]{Serre}
Jean-Pierre Serre.
\newblock {\em Trees}.
\newblock Springer-Verlag, Berlin-New York, 1980.
\newblock Translated from the French by John Stillwell.

\bibitem[{Tao}16]{tao}
Jing {Tao}.
\newblock {Effective quasimorphisms on free chains}.
\newblock {\em arXiv e-prints}, page arXiv:1605.03682, May 2016.

\end{thebibliography}

\end{document}